\documentclass[11pt]{amsart}

\usepackage{amssymb}
\usepackage[numbers,sort&compress]{natbib}
\usepackage{xcolor}
\usepackage{enumitem}
\usepackage{tikz}
\usepackage{pgfplots}
\usepgfplotslibrary{groupplots}
\pgfplotsset{compat=1.18}
\usepackage{hyperref}

\hypersetup{
  pdfauthor={Qian Ai, Xiang Fang, Shengzhao Hou},
  pdftitle={Hardy--Szeg\H{o} Point Processes: Large Deviations and Strong Szeg\H{o} Asymptotics},
  pdfsubject={Large deviations and strong Szeg\H{o} asymptotics for Hardy--Szeg\H{o} point processes},
  pdfkeywords={Hardy--Szeg\H{o} zeros, determinantal point processes, large deviations, strong Szeg\H{o} asymptotics, Wiener--Hopf operators, Fredholm determinants}
}

\definecolor{myblue}{RGB}{40,90,180}
\definecolor{myred}{RGB}{190,50,50}
\definecolor{myorange}{RGB}{220,130,30}
\definecolor{mypurple}{RGB}{120,70,170}
\definecolor{mygreen}{RGB}{40,140,80}

\pgfmathdeclarefunction{atanhr}{1}{%
  \pgfmathparse{0.5*ln((1+#1)/(1-#1))}%
}

\pgfmathdeclarefunction{atanrad}{1}{%
  \pgfmathparse{atan(#1)*pi/180}%
}

\title[Hardy--Szeg\H{o} Point Processes]
{Hardy--Szeg\H{o} Point Processes:
Large Deviations and Strong Szeg\H{o} Asymptotics}

\author{Qian Ai}
\address{School of Mathematical Sciences, Soochow University,
Suzhou 215006, P. R. China}
\email{20234007001@stu.suda.edu.cn}

\author{Xiang Fang}
\thanks{X. F. was supported by National Science and Technology Council
(No.~114-2115-M-A49-003-MY3).}
\address{National Yang Ming Chiao Tung University,
Hsinchu, Taiwan (R.O.C.)}
\email{xfang@nycu.edu.tw}

\author{Shengzhao Hou}
\thanks{S. H. was supported by National Natural Science Foundation of China
(No.~12371133).}
\address{School of Mathematical Sciences, Soochow University,
Suzhou 215006, P. R. China}
\email{shou@suda.edu.cn}

\subjclass[2020]{60G55, 60F10, 47B35, 30B20}

\keywords{Hardy--Szeg\H{o} zeros; determinantal point processes;
large deviations; strong Szeg\H{o} asymptotics;
Wiener--Hopf operators; Fredholm determinants}

\numberwithin{equation}{section}

\theoremstyle{plain}
\newtheorem{theorem}{Theorem}[section]
\newtheorem{lemma}[theorem]{Lemma}
\newtheorem{proposition}[theorem]{Proposition}
\newtheorem{corollary}[theorem]{Corollary}

\theoremstyle{definition}
\newtheorem{definition}[theorem]{Definition}
\newtheorem{example}[theorem]{Example}

\theoremstyle{remark}
\newtheorem{remark}[theorem]{Remark}

\begin{document}

\begin{abstract}
We study exponential-scale fluctuations of the Hardy--Szeg\H{o} zero process, investigated by Peres and Vir\'ag in the disk setting, in its upper-half-plane realization, which reveals a different probabilistic geometry. This conformally invariant determinantal zero process is equivalent to its disk realization, but the upper-half-plane coordinates make real-translation invariance explicit and single out long horizontal windows as natural observables. For the vertical window from height one to height \(a>1\), let \(N_a(L)\) denote the number of zeros in the corresponding horizontal window of length \(L\). We identify the limiting scaled log-moment generating function explicitly. As a consequence, we prove a large deviations principle for \(N_a(L)/L\), with rate function given by the Legendre transform of this limit. We also prove a strong Szeg\H{o} expansion for the log-moment generating function, including an explicit order-one correction, locally uniformly in the natural complex strip. The proof uses the planar determinantal structure before projection, reduces the problem to a one-dimensional Fredholm determinant, and combines fixed-power trace asymptotics with a Wiener--Hopf comparison.
\end{abstract}

\maketitle

\section{Introduction and main results}
\label{sec:introduction-main-results}

\noindent\textbf{The Hardy--Szeg\H{o} model and the upper-half-plane geometry.} The Hardy--Szeg\H{o} zero process is a canonical conformally invariant determinantal point process arising from zeros of Gaussian analytic functions. Its disk realization was studied by Peres and Vir\'ag~\cite{PeresVirag2005} as the zero set of the independent identically distributed Gaussian power series on the unit disk. In this paper we study this conformally invariant process in its upper-half-plane realization, which makes horizontal stationarity explicit: the process is invariant under real translations. This makes horizontal strips and long boundary-parallel windows natural observables, and leads to asymptotic questions that are not apparent in disk coordinates.

Let
\(
        \mathbb H:=\{z\in\mathbb C:\operatorname{Im}z>0\},
\)
and let \(\mathrm dA\) denote planar Lebesgue measure.  We consider the
determinantal point process on \((\mathbb H,\mathrm dA)\) with Hermitian
correlation kernel
\[
        K_{\mathbb H}(z,w)
        =
        -\frac{1}{\pi(z-\overline w)^2},
        \qquad z,w\in\mathbb H.
\]
Equivalently, this process is the zero set of the Hardy--Szeg\H{o} Gaussian
analytic function
\[
        f_{\mathbb H}(z)
        =
        (\varphi'(z))^{1/2}
        \sum_{n=0}^{\infty}\xi_n\varphi(z)^n,
        \qquad z\in\mathbb H,
\]
where \((\xi_n)_{n\geq0}\) are independent standard complex Gaussian random
variables, \(\varphi:\mathbb H\to \mathbb D:=\{z\in\mathbb C: |z|<1\}\) is the Cayley map
\(
        \varphi(z):=\frac{z-i}{z+i},
\)
and a holomorphic branch of \((\varphi')^{1/2}\) is fixed.  We write
\(\mathcal Z:=Z(f_{\mathbb H})\) for the zero set.  The explicit kernel and
the horizontal stationarity are the two structural features used throughout
the paper.  A more detailed account of the analytic function and its zero
process is given in Section~\ref{sec:model-reduction}.

\medskip

\noindent\textbf{Long horizontal windows and projection.}
The upper-half-plane realization suggests a natural macroscopic question: how
does the number of zeros fluctuate in a boundary-parallel window whose
horizontal length tends to infinity while its height range remains fixed? For a bounded vertical interval
\(I\subset(0,\infty)\) and \(L>0\), define
\[
        N_I(L)
        :=
        \#\{z\in\mathcal Z:
        0\leq \operatorname{Re}z\leq L,\ 
        \operatorname{Im}z\in I\}.
\]
Thus \(N_I(L)\) counts Hardy--Szeg\H{o} zeros in the horizontal window
\([0,L]\times I\).  The regime considered in this paper is \(L\to\infty\),
with the vertical interval \(I\) fixed.

Equivalently, these counts may be encoded by projecting the zeros in the strip
\(\mathbb R\times I\) onto the real line.  We set
\[
        \mathcal X_I
        :=
        \sum_{\substack{z\in\mathcal Z\\ \operatorname{Im}z\in I}}
        \delta_{\operatorname{Re}z},
        \qquad
        N_I(L)=\mathcal X_I([0,L]).
\]
The process \(\mathcal X_I\) is therefore a one-dimensional stationary point
process obtained as a natural horizontal observation of the planar
Hardy--Szeg\H{o} zero process.  This projection viewpoint is useful but also
delicate.  Determinantal point processes are powerful because their correlation
functions and generating functionals have explicit determinant forms, but such
structure is not automatically preserved under geometric operations.  A
projection collapses one coordinate and can change the local structure of the
process, while retaining nontrivial dependence inherited from the underlying
two-dimensional determinantal process.  For this reason, our proof does not
treat the projected process as a stand-alone determinantal point process.
Instead, we use the planar determinantal structure first and integrate over the
height variable only after the determinant has been reduced to a suitable
one-dimensional operator.

\medskip

\noindent\textbf{From Gaussian fluctuations to exponential asymptotics.}
Horizontal window counts for the projected Hardy--Szeg\H{o} zero process were
studied at the Gaussian scale in~\cite{AiGuoZhou2026}, where macroscopic
Gaussian fluctuations and a functional central limit theorem were established.
The present paper continues this line of investigation beyond the Gaussian
regime.  We study exponential moments, large deviations, and strong
Szeg\H{o} asymptotics for the long-window counts.

More precisely, for a bounded vertical interval \(I\subset(0,\infty)\), we
analyze the scaled logarithmic moment generating functions
\[
        \Lambda_{I,L}(\theta)
        :=
        \frac{1}{L}
        \log\mathbb E\exp(\theta N_I(L)).
\]
By the dilation invariance of the Hardy--Szeg\H{o} zero process, it is enough
to treat the reference windows \(I_a=[1,a]\), \(a>1\). For these windows we write
\[
        \mathcal X_a:=\mathcal X_{I_a},
        \qquad
        N_a(L):=\mathcal X_a([0,L]),
        \qquad
        \Lambda_{a,L}(\theta)
        :=
        \frac{1}{L}\log\mathbb E\exp(\theta N_a(L)).
\]
This is the notation used in the statements below.

\subsection{Main results}

We now state the main results for the normalized reference family \(I_a=[1,a]\), \(a>1\).

The first result identifies the limiting scaled log-moment generating function.
We write \(S_{\pi}:=\{\theta\in\mathbb C:|\operatorname{Im}\theta|<\pi\}\) for
the natural strip of holomorphy.

\begin{theorem}[Szeg\H{o} limit]
\label{thm:szego-limit-reference}
Let \(a>1\) and \(I_a=[1,a]\). For every \(\theta\in\mathbb{R}\), the limit
\[
\Lambda_a(\theta):=\lim_{L\to\infty}\Lambda_{a,L}(\theta)
\]
exists, and the convergence is locally uniform in \(\theta\in\mathbb{R}\). Moreover,
\[
\Lambda_a(\theta)
=
\frac{1}{2\pi}
\int_0^{\infty}
\log\left(
1+(e^{\theta}-1)(e^{-2\xi}-e^{-2a\xi})
\right)
\,\mathrm{d}\xi.
\]
The function \(\Lambda_a\) is real analytic on \(\mathbb{R}\) and extends holomorphically to \(S_{\pi}\), where the logarithm is the branch obtained by analytic continuation from \(\theta=0\). Furthermore,
\begin{equation}
\label{eq:lambda-a-derivatives-at-zero}
\Lambda_a'(0)
=
\frac{1}{4\pi}\left(1-\frac{1}{a}\right),
\qquad 
\Lambda_a''(0)
=
\frac{(a-1)(a+3)}{8\pi a(a+1)}.
\end{equation}
\end{theorem}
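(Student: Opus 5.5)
The plan is to write the exponential moment as a Fredholm determinant, reduce it to a truncated convolution operator on \([0,L]\), and then apply a Kac--Akhiezer (continuous Szeg\H{o}) limit theorem.

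\emph{Step 1: factor the kernel.} Because the process is determinantal with a trace-class restriction to the bounded-height window \(W_L=[0,L]\times[1,a]\), we have
\[
\mathbb E e^{\theta N_a(L)}=\det\bigl(I+(e^\theta-1)\chi_{W_L}K_{\mathbb H}\chi_{W_L}\bigr).
\]
For \(\operatorname{Im}u>0\) one has \(\int_0^\infty \xi e^{i\xi u}\,\mathrm d\xi=-u^{-2}\). Hence
\[
K_{\mathbb H}(z,w)=\frac1\pi\int_0^\infty \xi\, e^{i\xi z}\overline{e^{i\xi w}}\,\mathrm d\xi ,
\]
so \(\chi_{W_L}K_{\mathbb H}\chi_{W_L}=AA^*\), where \(A:L^2(\mathbb R_+)\to L^2(W_L)\) is given by \((Ag)(z)=\int_0^\infty\sqrt{\xi/\pi}\,e^{i\xi z}g(\xi)\,\mathrm d\xi\).

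\emph{Step 2: integrate out the height.} Using \(\det(I+cAA^*)=\det(I+cA^*A)\) and integrating over \(y\in[1,a]\) and \(x\in[0,L]\), we obtain an operator on \(L^2(\mathbb R_+)\). Conjugating by the Fourier transform in \(x\), this is unitarily equivalent (up to zero blocks) to the truncated convolution operator \(W_L(\sigma)=\chi_{[0,L]}\mathcal F^{-1}\sigma\mathcal F\chi_{[0,L]}\) with symbol
\[
\sigma(\xi)=\bigl(e^{-2\xi}-e^{-2a\xi}\bigr)\mathbf 1_{\xi>0}.
\]
This is the reduction to a one-dimensional Fredholm determinant, and it uses the planar structure before projecting: the height integral is absorbed into \(\sigma\) rather than into a projected kernel. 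Thus
\[
\Lambda_{a,L}(\theta)=\frac1L\log\det\bigl(I+(e^\theta-1)W_L(\sigma)\bigr).
\]

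\emph{Step 3: the limit.} The symbol satisfies \(0\le\sigma<1\), is Lipschitz, vanishes linearly at \(0\) and decays exponentially. Its inverse Fourier transform \(k\) decays like \(x^{-2}\), so \(k\in L^1\) and \(\int|x||k(x)|^2\,\mathrm dx<\infty\). For real \(\theta\) the function \(1+(e^\theta-1)\sigma\) takes values in \((0,\infty)\), so it has no winding. The Kac--Akhiezer theorem therefore gives
\[
\Lambda_{a,L}(\theta)\to\frac1{2\pi}\int_0^\infty\log\bigl(1+(e^\theta-1)\sigma(\xi)\bigr)\,\mathrm d\xi .
\]
Alternatively, one can argue directly. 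First, for \(|e^\theta-1|\sup\sigma<1\), expand the logarithm in a power series and use the fixed-power trace asymptotics \(\operatorname{tr}W_L(\sigma)^k/L\to\frac1{2\pi}\int\sigma^k\). These follow from the decay of \(k\) and the fact that \(\operatorname{tr}\bigl(W_L(\sigma)^k-W_L(\sigma^k)\bigr)=O(1)\). The remaining real \(\theta\) are then handled by a Wiener--Hopf comparison: factor \(1+(e^\theta-1)\sigma\) and bound \(\log\det\) of the commutator remainder by \(O(1)\) in trace norm. I expect this comparison outside the small-\(\theta\) disk to be the main obstacle; the key point is that the Hankel-type terms are Hilbert--Schmidt uniformly in \(L\). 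Locally uniform convergence on \(\mathbb R\) then comes for free, since the \(\Lambda_{a,L}\) are convex and converge pointwise to a continuous limit.

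\emph{Step 4: analyticity and derivatives.} Write \(1+(e^\theta-1)\sigma=(1-\sigma)+\sigma e^\theta\). Since \(\sigma\in[0,1)\), this can vanish only if \(e^\theta\) is a negative real number, which is impossible in \(S_\pi\). Moreover, on compact subsets of \(S_\pi\) the integrand is bounded by \(C\sigma(\xi)\), which is integrable. Hence \(\Lambda_a\) is holomorphic on \(S_\pi\), with the branch continued from \(\theta=0\).

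Differentiating under the integral at \(\theta=0\) gives
\[
\Lambda_a'(0)=\frac1{2\pi}\int\sigma,\qquad \Lambda_a''(0)=\frac1{2\pi}\int(\sigma-\sigma^2).
\]
Direct computation gives \(\int\sigma=\tfrac12-\tfrac1{2a}\), which yields \(\Lambda_a'(0)=\frac1{4\pi}\bigl(1-\frac1a\bigr)\). Next, \(\int\sigma^2=\tfrac14-\tfrac1{a+1}+\tfrac1{4a}\), so
\[
\int(\sigma-\sigma^2)=\frac14-\frac3{4a}+\frac1{a+1}=\frac{(a-1)(a+3)}{4a(a+1)} .
\]
Dividing by \(2\pi\) gives \eqref{eq:lambda-a-derivatives-at-zero}.
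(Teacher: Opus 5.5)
Your Steps 1 and 4 are fine, but Step 2 makes a false claim, and Step 3 depends on it. After Sylvester's identity the operator on \(L^2(0,\infty)\) is \(A^*A\), with kernel
\[
(A^*A)(\xi,\eta)=\frac{\sqrt{\xi\eta}}{\pi}\,X_L(\xi-\eta)\,H_a(\xi+\eta),\qquad X_L(s)=\int_0^L e^{-isx}\,\mathrm dx,\quad H_a(s)=\int_1^a e^{-sy}\,\mathrm dy .
\]
Integrating out the height gives \(H_a(\xi+\eta)=\langle e_\xi,e_\eta\rangle_{L^2[1,a]}\), where \(e_\xi(y)=e^{-\xi y}\). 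This does not factor into a function of \(\xi\) times a function of \(\eta\).

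The operator that is genuinely unitarily equivalent (up to zero blocks) to \(W_L(\sigma)\) is \(VV^*\), where \((Vf)(\xi)=\sqrt{\sigma(\xi)/2\pi}\,\widehat f(\xi)\). Its kernel carries \(\sqrt{H_a(2\xi)H_a(2\eta)}=\|e_\xi\|\,\|e_\eta\|\) instead, and by Cauchy--Schwarz the two factors agree only on the diagonal. Equivalently, on the full strip the planar kernel is an \(x\)-convolution with the \emph{operator-valued} rank-one symbol \(2\xi\,e_\xi\langle e_\xi,\cdot\rangle\) on \(L^2[1,a]\). The range of this symbol rotates with \(\xi\), so truncation to \([0,L]\) does not produce a scalar truncated convolution.

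No conjugation can repair this, because the spectra differ:
\begin{itemize}
\item The first traces agree exactly: \(\operatorname{tr}(A^*A)=\operatorname{tr}W_L(\sigma)=L\lambda_a\).
\item The quantity \(\operatorname{tr}(A^*A)-\operatorname{tr}((A^*A)^2)=\operatorname{Var}N_a(L)\) has \(O(1)\) term \(\pi^{-2}\log\frac{(a+1)^2}{4a}\).
\item The Wiener--Hopf analogue \(\operatorname{tr}W_L(\sigma)-\operatorname{tr}(W_L(\sigma)^2)\) has \(O(1)\) term \(\frac{(a-1)\log a}{2\pi^2(a+1)}\).
\end{itemize}
As \(a\to\infty\) these behave like \(\pi^{-2}\log a\) and \((2\pi^2)^{-1}\log a\), so they differ. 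Hence your identity \(\Lambda_{a,L}(\theta)=L^{-1}\log\det(\operatorname{Id}+(e^\theta-1)W_L(\sigma))\) is false. Kac--Akhiezer, and your ``\(\operatorname{tr}(W_L(\sigma)^k-W_L(\sigma^k))=O(1)\)'' alternative, are applied to the wrong operator.

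Your final formula is still correct, because the two operators agree to leading order. That agreement is exactly what must be proved, and there are two ways to do it.

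\begin{itemize}
\item \textbf{The paper's route: compute the trace moments of \(A^*A\) directly.} In the cyclic trace formula, set \(\xi=\xi_1\) and \(u_j=L(\xi_j-\xi_{j+1})\). Dominate the integrand by \(e^{-2\xi}\prod_j(1+|u_j|)^{-1}\,(1+|\sum_j u_j|)^{-1}\), which is integrable by an iterated convolution bound. The limiting \(u\)-integral of products of \(2\sin(u/2)/u\) equals \((2\pi)^{m-1}\). This yields \(L^{-1}\operatorname{tr}((A^*A)^m)\to\frac{1}{2\pi}\int_0^\infty\sigma^m\).
\item \textbf{Keeping your Wiener--Hopf model.} Prove \(\|A^*A-VV^*\|_1=o(L)\). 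In fact the bound is \(O(1)\): use the second-order contact \(H_a(\xi+\eta)-\sqrt{H_a(2\xi)H_a(2\eta)}=O((\xi-\eta)^2)\) together with a trace-class criterion. Then, for positive contractions and real \(t>-1\),
\[
|\log\det(\operatorname{Id}+tA^*A)-\log\det(\operatorname{Id}+tVV^*)|\le\max\{1,(1+t)^{-1}\}\,|t|\,\|A^*A-VV^*\|_1 .
\]
\end{itemize}

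The large-\(\theta\) obstacle you anticipate disappears once trace moments are known. Since \(0\le A^*A\le\operatorname{Id}\) and \(L^{-1}\operatorname{tr}(A^*A)\) is bounded, approximate \(\lambda\mapsto\log(1+t\lambda)\) in \(C^1[0,1]\) by polynomials vanishing at \(0\). This gives the log-determinant limit for every \(t=e^\theta-1>-1\) at once, with no factorization needed.

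Your convexity argument for local uniformity is a valid alternative to the paper's equicontinuity argument.
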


The derivative \(\Lambda_a'(0)\) is the intensity of \(\mathcal{X}_a\), and \(\Lambda_a''(0)\) is the linear variance density of \(N_a(L)\). 

The convexity and steepness properties of \(\Lambda_a\) then allow us to apply
the G\"artner--Ellis theorem \cite{Ellis1984,Gaertner1977}, yielding the large
deviations principle.

\begin{theorem}[Large deviations principle]
\label{thm:ldp-reference}
Let \(a>1\) and \(I_a=[1,a]\). Then \(N_a(L)/L\) satisfies a large deviations principle on \([0,\infty)\) with speed \(L\) and good rate function
\[
J_a(x):=\sup_{\theta\in\mathbb{R}}\{\theta x-\Lambda_a(\theta)\},
\qquad x\geq0.
\]
That is, for every Borel set \(A\subset[0,\infty)\),
\[
\begin{aligned}
-\inf_{x\in A^\circ}J_a(x)
&\le \liminf_{L\to\infty}\frac{1}{L}\log\mathbb{P}\left(\frac{N_a(L)}{L}\in A\right) \\
&\le \limsup_{L\to\infty}\frac{1}{L}\log\mathbb{P}\left(\frac{N_a(L)}{L}\in A\right) \le -\inf_{x\in \overline{A}}J_a(x),
\end{aligned}
\]
where \(A^\circ\) and \(\overline A\) denote the interior and closure of \(A\) in the relative topology of \([0,\infty)\). Moreover, \(J_a\) is strictly convex on \((0,\infty)\) and vanishes only at \(\lambda_a=\frac{1}{4\pi}\left(1-\frac{1}{a}\right)\). 
\end{theorem}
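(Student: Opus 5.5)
The plan is to apply the Gärtner--Ellis theorem, taking Theorem~\ref{thm:szego-limit-reference} as given. That theorem supplies, for every real \(\theta\), the limit \(\Lambda_a(\theta)=\lim_{L\to\infty}\Lambda_{a,L}(\theta)\), finite and real analytic on all of \(\mathbb R\). Hence \(0\) lies in the interior of the effective domain \(\mathcal D_{\Lambda_a}=\mathbb R\), and \(\Lambda_a\) is differentiable there.

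Because the domain is all of \(\mathbb R\), steepness reduces to essential smoothness, which holds trivially: there are no boundary points, so no blow-up of \(\Lambda_a'\) at the boundary has to be checked. The Gärtner--Ellis theorem on \(\mathbb R\) then gives the full LDP for \(N_a(L)/L\) with speed \(L\) and good rate function \(J_a=\Lambda_a^*\). Goodness follows because \(\Lambda_a\) is finite near \(0\), which makes \(J_a\) have compact level sets.

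Since \(N_a(L)\ge0\), the laws of \(N_a(L)/L\) are supported in \([0,\infty)\), a closed subset of \(\mathbb R\). I will verify that \(J_a(x)=+\infty\) for \(x<0\): letting \(\theta\to-\infty\), \(\Lambda_a(\theta)\) stays bounded, because the integrand \(\log(1-(e^{-2\xi}-e^{-2a\xi}))\) has \(0<e^{-2\xi}-e^{-2a\xi}<1\) and is integrable (it behaves like \(-2(a-1)\xi\) near \(0\) and decays exponentially at infinity). So \(\theta x-\Lambda_a(\theta)\to\infty\) for \(x<0\). The standard restriction of an LDP to a closed set carrying all the mass then gives the LDP on \([0,\infty)\) with the relative topology.

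For the properties of \(J_a\), I will show \(\Lambda_a\) is strictly convex. Write \(p(\xi)=e^{-2\xi}-e^{-2a\xi}\in(0,1)\) for \(\xi>0\). Then
\[
\Lambda_a(\theta)=\frac1{2\pi}\int_0^\infty\log\bigl(1-p(\xi)+p(\xi)e^\theta\bigr)\,\mathrm d\xi
\]
is an integral of Bernoulli log-MGFs. Each one has second derivative \(q(1-q)>0\), where \(q=pe^\theta/(1-p+pe^\theta)\). Differentiating under the integral, which is justified by dominated convergence locally uniformly in \(\theta\), gives \(\Lambda_a''(\theta)>0\) on \(\mathbb R\). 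The first derivative is
\[
\Lambda_a'(\theta)=\frac1{2\pi}\int_0^\infty q_\theta(\xi)\,\mathrm d\xi .
\]
As \(\theta\to-\infty\) it tends to \(0\), by dominated convergence with \(q_\theta\le pe^\theta/(1-p)\). As \(\theta\to+\infty\) it tends to \(+\infty\), since \(q_\theta\to1\) pointwise and Fatou's lemma applies over an infinite range of \(\xi\).

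Therefore \(\Lambda_a'\) is a real-analytic increasing bijection \(\mathbb R\to(0,\infty)\). Legendre duality then gives \(J_a(x)=\theta_x x-\Lambda_a(\theta_x)\) with \(\theta_x=(\Lambda_a')^{-1}(x)\), and \(J_a''(x)=1/\Lambda_a''(\theta_x)>0\), so \(J_a\) is strictly convex on \((0,\infty)\). Moreover \(J_a\ge0\), with \(J_a(x)=0\) iff \(x=\Lambda_a'(0)=\lambda_a\), using \eqref{eq:lambda-a-derivatives-at-zero}. At \(x=0\), \(J_a(0)=-\lim_{\theta\to-\infty}\Lambda_a(\theta)>0\), which is finite but nonzero.

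The main obstacle, though mild, is making the tail limits of \(\Lambda_a'\) and the differentiation under the integral rigorous uniformly. The key input is the integrability of \(\log(1-p)\) near \(\xi=0\), where \(p\to0\); it is fine since \(1-p\to1\) there, so the only real issue is decay at \(\xi\to\infty\), which is exponential.
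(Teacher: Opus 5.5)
Your proposal is correct and follows essentially the same route as the paper: Gärtner--Ellis on \(\mathbb R\) with steepness vacuous (since \(\Lambda_a\) is finite and differentiable everywhere), and boundedness of \(\Lambda_a\) as \(\theta\to-\infty\) to get \(J_a=+\infty\) on \((-\infty,0)\) and restrict the LDP to \([0,\infty)\). You then use the slope limits \(\Lambda_a'(-\infty)=0\), \(\Lambda_a'(+\infty)=+\infty\) and Legendre duality with \(J_a''=1/\Lambda_a''(\theta_a(x))\) to get strict convexity and the unique zero at \(\Lambda_a'(0)=\lambda_a\), exactly as in Lemma~\ref{lem:convexity-steepness} and Corollary~\ref{cor:rate-function-parametrization}. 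Your Bernoulli log-MGF reading, with second derivative \(q(1-q)\), is just a repackaging of the paper's integral formula for \(\Lambda_a''\).
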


\begin{remark}
The LDP is naturally stated on \([0,\infty)\), since \(N_a(L)/L\geq0\)
almost surely.  At the left endpoint, the rate function is understood by
continuity:
\(
        J_a(0):=\lim_{x\downarrow0}J_a(x)
        =-\lim_{\theta\to-\infty}\Lambda_a(\theta),
\)
which is consistent with the Legendre--Fenchel representation
\(J_a(x)=\sup_{\theta\in\mathbb R}\{\theta x-\Lambda_a(\theta)\}\).
The same statement may also be formulated on \(\mathbb R\) by extending the rate function as \(\widetilde J_a(x)=J_a(x)\) for \(x\geq0\) and \(\widetilde J_a(x)=+\infty\) for \(x<0\).
\end{remark}

The next result refines the leading logarithmic asymptotics to a strong Szeg\H{o} expansion.

\begin{theorem}[Strong Szeg\H{o} expansion]
\label{thm:strong-szego-reference}
Let \(a>1\) and \(I_a=[1,a]\). There exists a holomorphic function \(\mathcal C_a:S_\pi\to\mathbb C\) such that, for every compact \(K\subset S_\pi\),
\[
\log\mathbb E\exp(\theta N_a(L))
=
L\Lambda_a(\theta)+\mathcal C_a(\theta)+o(1),
\qquad L\to\infty,
\]
uniformly for \(\theta\in K\).
\end{theorem}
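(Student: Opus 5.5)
Our plan is to write the generating function as a Fredholm determinant on the planar strip and then reduce it to a one-dimensional Wiener--Hopf (truncated convolution) determinant in the horizontal variable, where the strong Szeg\H{o} theorem of Kac--Akhiezer type applies. Since \(\mathcal Z\) is determinantal with Hermitian kernel \(K_{\mathbb H}\), we have
\[
\mathbb E\exp(\theta N_a(L))=\det\bigl(I+(e^\theta-1)\,\chi_{L}K_{\mathbb H}\chi_{L}\bigr),
\]
where \(\chi_L\) is the indicator of \([0,L]\times[1,a]\). This identity first holds as an entire function of \(\theta\), because the restricted operator is trace class, being the restriction of a smooth kernel to a compact set. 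We then use the Fourier representation
\[
-\frac{1}{\pi(z-\overline w)^2}=\frac1\pi\int_0^\infty \xi\, e^{i\xi(z-\overline w)}\,\mathrm d\xi .
\]
This factors the kernel as \(K=B^*B\), where \(B:L^2(\text{strip})\to L^2(\mathbb R_+,\mathrm d\xi)\) is given by \((Bf)(\xi)=\sqrt{\xi/\pi}\int f(w)e^{-i\xi\overline w}\,\mathrm dA(w)\); a sign or conjugation convention may need fixing. By Sylvester's identity \(\det(I+cB^*B)=\det(I+cBB^*)\), we obtain a one-dimensional operator on \(L^2(\mathbb R_+)\) with kernel
\[
\frac{\sqrt{\xi\eta}}{\pi}\int_1^a e^{-(\xi+\eta)y}\,\mathrm dy\int_0^L e^{i(\xi-\eta)x}\,\mathrm dx .
\]
The height variable is integrated out exactly at this point, which is the ``planar structure before projection'' step.

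Next we change variables to logarithmic or rescaled coordinates in which the \(x\)-integral \(\int_0^L e^{i(\xi-\eta)x}\mathrm dx\) becomes the sinc-type kernel of a projection. It is more convenient to reverse the roles: view the operator as \(\chi_{[0,L]}\,\mathcal F^{-1} M \mathcal F\,\chi_{[0,L]}\) acting on \(L^2([0,L],\mathrm dx)\), where \(M\) is the operator on \(L^2(\mathbb R_+,\mathrm d\xi)\) with kernel \(\frac{\sqrt{\xi\eta}}{\pi}\frac{e^{-(\xi+\eta)}-e^{-a(\xi+\eta)}}{\xi+\eta}\). This operator is not a multiplier, so it is not literally Wiener--Hopf. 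Its diagonal symbol is \(\frac{1}{2\pi}(e^{-2\xi}-e^{-2a\xi})\cdot 2\pi\), matching Theorem~\ref{thm:szego-limit-reference}. The approach is therefore to compare \(T_L:=\chi_L\mathcal F^{-1}M\mathcal F\chi_L\) with the genuine truncated Wiener--Hopf operator \(W_L(\sigma)\) with symbol \(\sigma(\xi)=e^{-2|\xi|}-e^{-2a|\xi|}\) restricted to \(\xi>0\). The argument proceeds in three steps. First, we expand \(\log\det(I+cT_L)=\sum_{k}\frac{(-1)^{k+1}c^k}{k}\operatorname{tr}T_L^k\) for small \(|c|\), and prove fixed-power trace asymptotics \(\operatorname{tr}T_L^k=L\cdot\frac{1}{2\pi}\int\sigma^k\,\mathrm d\xi+d_k+o(1)\) with explicit constants \(d_k\). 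The leading term comes from the diagonal \(\xi=\eta\), and \(d_k\) collects the edge corrections at \(x=0,L\), which are of Widom type \(\frac{1}{(2\pi)^2}\int_0^\infty u\,\widehat{\cdots}\) together with an off-diagonal correction from \(M\) not being a multiplier. Second, we control the tail \(\sum_{k>K}\) uniformly in \(L\) using \(\|T_L\|\le\sup\sigma<1\) and \(\|T_L^k\|_1\le\|T_L\|^{k-2}\|T_L\|_2^2=O(L)\) for fixed \(|c|\), combined with a cumulant comparison showing that \(\log\det(I+cT_L)-\log\det(I+cW_L)\) is \(O(1)\) and convergent. Third, we use the Kac--Akhiezer theorem for \(W_L\), which gives \(L\Lambda_a+\) an explicit constant, provided \(1+c\sigma\neq0\) with winding zero. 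For \(\theta\in S_\pi\) we have \(c=e^\theta-1\) and \(\sigma\in[0,1)\), so \(1+c\sigma\) avoids \((-\infty,0]\). The symbol is therefore invertible with a continuous logarithm, and this is precisely why the strip \(S_\pi\) appears.

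The main obstacle is to extend from small \(|c|\) to all of \(S_\pi\) with local uniformity. Our plan for this step is a normal families argument. We set \(F_L(\theta):=\log\det(I+(e^\theta-1)T_L)-L\Lambda_a(\theta)\) and show that \(F_L\) is holomorphic and locally uniformly bounded on \(S_\pi\). Holomorphy requires nonvanishing of the determinant, which holds because \(T_L\) is self-adjoint with spectrum in \([0,\sup\sigma]\subset[0,1)\), so \(1+c\lambda\ne0\) for \(c\notin(-\infty,-1]\). For the bound we use the comparison \(\det(I+cT_L)/\det(I+cW_L)=\det(I+(I+cW_L)^{-1}c(T_L-W_L))\). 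Here \(T_L-W_L\) is trace class uniformly in \(L\), which follows from the \(L^1\)-type decay of the off-diagonal part of \(M\) in \(\xi-\eta\) after conjugation, and the Wiener--Hopf inverses \((I+cW_L)^{-1}\) are uniformly bounded for invertible symbols of zero index. Convergence of \(F_L\) near \(\theta=0\), from the trace expansion, then gives convergence on all of \(S_\pi\) by Vitali's theorem. This defines a holomorphic \(\mathcal C_a\) and completes the proof of Theorem~\ref{thm:strong-szego-reference}.
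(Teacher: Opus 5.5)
Your overall architecture matches the paper's: Sylvester reduction to $L^2(0,\infty)$, a Wiener--Hopf model handled by Widom/Kac--Akhiezer, a uniformly trace-class discrepancy, normal families, and identification of the discrepancy limit near $\theta=0$ through trace defects. However, the comparison step, on which everything else rests, is set up incorrectly.

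The operator you derived in your first paragraph has kernel equal to the \emph{entrywise} product of the kernel of $M$ with $X_L(\xi-\eta)$ (up to conjugation). The ``reversed'' operator $\chi_{[0,L]}\mathcal F^{-1}M\mathcal F\chi_{[0,L]}$ is instead the \emph{composition} $PMP$ with $P=\mathcal F\chi_{[0,L]}\mathcal F^{-1}$, and these are different operators. Write $M=N^*N$ with $(Nf)(y)=\int_0^\infty\sqrt{\eta/\pi}\,e^{-\eta y}f(\eta)\,\mathrm d\eta$, a Hilbert--Schmidt map into $L^2[1,a]$. Then $M$ is trace class with $\operatorname{tr}M=\lambda_a$, so your $T_L$ has trace norm at most $\lambda_a$ for every $L$. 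Its log-determinant is therefore $O(1)$ and cannot produce $L\Lambda_a(\theta)$. Moreover $\operatorname{tr}(T_L-W_L(\sigma))=\operatorname{tr}T_L-L\lambda_a$, so the claimed uniform trace-class bound is false for the objects as you defined them. In $x$-space the true operator is a block Wiener--Hopf operator on $L^2((0,L);L^2[1,a])$ with a rank-one operator-valued symbol. That is why no scalar truncated Wiener--Hopf operator lives on the same space.

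The missing idea is a scalar Wiener--Hopf model placed on the same space $L^2(0,\infty)$ as $T_{a,L}$, together with the actual mechanism for the uniform bound. The paper replaces $H_a(\xi+\eta)$ by $\sqrt{H_a(2\xi)H_a(2\eta)}$, giving $\widetilde W_{a,L}$ with kernel $\frac{1}{2\pi}\sqrt{r_a(\xi)r_a(\eta)}\,X_L(\xi-\eta)$. This equals $V_{a,L}V_{a,L}^*$ with $V_{a,L}^*V_{a,L}=W_L(r_a)$ (Lemma~\ref{lem:wh-equivalence}), so its Fredholm determinant is exactly the Wiener--Hopf determinant. The difference factor $\Delta_a(\xi,\eta)=H_a(\xi+\eta)-\sqrt{H_a(2\xi)H_a(2\eta)}$ vanishes to second order on the diagonal. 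The identity $X_L(s)s^2=is(e^{-isL}-1)$ then turns $T_{a,L}-\widetilde W_{a,L}$ into $M_LQ_aM_L^*-Q_a$, where $M_L$ is multiplication by $e^{-i\xi L}$ and $Q_a$ is a fixed trace-class operator. This gives the uniform bound (Proposition~\ref{prop:trace-class-discrepancy}).

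Off-diagonal decay of $M$ in $\xi-\eta$ is not what makes this work. $X_L(\xi-\eta)$ has height $L$ on a band of width $1/L$ around the diagonal, and only the vanishing of the discrepancy there neutralizes it. Without this step, neither your normal-family bound nor your cumulant comparison is justified.

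Two smaller points:
\begin{enumerate}
\item Your ratio identity only controls $F_L$ minus the Wiener--Hopf part. You also need $\log\det(I+cW_L(r_a))-L\Lambda_a(\theta)$ bounded uniformly in $L$, locally uniformly on $S_\pi$, and pointwise Kac--Akhiezer does not give this. The paper obtains it from product-defect estimates with a $W^{1,2}$ log-symbol (Lemma~\ref{lem:canonical-normalized-bounded}).
\item The bound $\|T_L^k\|_1=O(L)$ is useless at the $O(1)$ level by itself. Tail control works only for the differences $\operatorname{tr}T^k-\operatorname{tr}W^k$, via telescoping against a uniform trace bound on $T-W$.
\end{enumerate}
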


The correction term \(\mathcal C_a\) is constructed through the structural
decomposition
\[
        \mathcal C_a(\theta)
        =
        \mathcal C_a^{\operatorname{WH}}(\theta)+\Phi_a(\theta),
\]
where \(\mathcal C_a^{\operatorname{WH}}\) is the canonical Wiener--Hopf
correction and \(\Phi_a\) is the holomorphic discrepancy limit between the
original reduced operator and the canonical comparison model. This decomposition
also identifies the quadratic term: Proposition~\ref{prop:correction-second-order}
shows that
\[
        \mathcal C_a(\theta)
        =
        \frac{1}{2\pi^2}
        \log\frac{(a+1)^2}{4a}\,\theta^2
        +
        O(|\theta|^3),
        \qquad \theta\to0.
\]
The same \(O(1)\) coefficient is reflected in the variance asymptotics obtained
by a direct second-order determinantal calculation. Namely,
Corollary~\ref{cor:variance-asymptotics} gives
\[
        \operatorname{Var}(N_a(L))
        =
        L\frac{(a-1)(a+3)}{8\pi a(a+1)}
        +
        \frac{1}{\pi^2}\log\frac{(a+1)^2}{4a}
        +
        o(1).
\]

We now pass from the reference family to arbitrary bounded intervals. If
\(I=[\alpha,\beta]\subset(0,\infty)\) and \(a=\beta/\alpha>1\), then
\(I=\alpha I_a\). The Hardy--Szeg\H{o} kernel is homogeneous under positive
dilations,
\[
K_{\mathbb H}(\alpha z,\alpha w)=\alpha^{-2}K_{\mathbb H}(z,w),
\qquad \alpha>0,
\]
and the zero process \(Z\) is therefore dilation-invariant in distribution; see
Section~\ref{sec:model-reduction}. Lemma~\ref{lem:count-scaling} records the
finite-\(L\) scaling relations
\[
N_I(L)\stackrel{\mathrm d}=N_a(L/\alpha),
\qquad
\Lambda_{I,L}(\theta)=\alpha^{-1}\Lambda_{a,L/\alpha}(\theta).
\]
For the limiting objects we write
\[
\Lambda_I(\theta):=\lim_{L\to\infty}\Lambda_{I,L}(\theta),
\qquad
J_I(x):=\sup_{\theta\in\mathbb R}\{\theta x-\Lambda_I(\theta)\},
\]
whenever the limit exists, and \(\mathcal C_I(\theta)\) for the strong
Szeg\H{o} correction associated with \(I\).

\begin{corollary}
\label{cor:general-interval-scaling}
Let \(I=[\alpha,\beta]\subset(0,\infty)\), \(0<\alpha<\beta<\infty\), and set \(a=\beta/\alpha>1\). Then the corresponding limiting objects satisfy
\[
\Lambda_I(\theta)=\alpha^{-1}\Lambda_a(\theta),
\qquad
J_I(x)=\alpha^{-1}J_a(\alpha x),
\qquad
\mathcal{C}_I(\theta)=\mathcal{C}_a(\theta).
\]
Moreover, the variables \(N_I(L)/L\) satisfy an LDP on \([0,\infty)\) with speed \(L\) and good rate function \(J_I\), and
\[
\log\mathbb{E}\exp(\theta N_I(L))
=
L\Lambda_I(\theta)+\mathcal{C}_I(\theta)+o(1)
\]
locally uniformly for \(\theta\in S_{\pi}\).
\end{corollary}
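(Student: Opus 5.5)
The plan is to reduce everything to the reference family through the finite-\(L\) scaling relation of Lemma~\ref{lem:count-scaling}, namely \(N_I(L)\stackrel{\mathrm d}=N_a(L/\alpha)\). This relation comes from dilation invariance of \(\mathcal Z\): the map \(z\mapsto \alpha z\) sends \([0,L/\alpha]\times I_a\) onto \([0,L]\times I\). Once it is in hand, each claimed identity follows from the corresponding reference theorem by the substitution \(L'=L/\alpha\to\infty\).

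\textbf{Szeg\H{o} limit and correction.} Since the laws agree, \(\log\mathbb E\exp(\theta N_I(L))=\log\mathbb E\exp(\theta N_a(L'))\) with \(L'=L/\alpha\), for all \(\theta\in S_\pi\) where these quantities are finite; finiteness is part of Theorem~\ref{thm:strong-szego-reference}. Dividing by \(L=\alpha L'\) gives \(\Lambda_{I,L}(\theta)=\alpha^{-1}\Lambda_{a,L'}(\theta)\). Theorem~\ref{thm:szego-limit-reference} then yields \(\Lambda_I=\alpha^{-1}\Lambda_a\), with locally uniform convergence. For the strong expansion, Theorem~\ref{thm:strong-szego-reference} gives
\[
\log\mathbb E\exp(\theta N_I(L))=L'\Lambda_a(\theta)+\mathcal C_a(\theta)+o(1)=L\Lambda_I(\theta)+\mathcal C_a(\theta)+o(1),
\]
uniformly on compact \(K\subset S_\pi\). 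The \(O(1)\) term is uniquely determined by this expansion, since the difference of two candidates is \(o(1)\) and independent of \(L\). Hence \(\mathcal C_I=\mathcal C_a\).

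\textbf{Rate function and LDP.} The Legendre transform identity is a direct computation:
\[
J_I(x)=\sup_\theta\{\theta x-\alpha^{-1}\Lambda_a(\theta)\}=\alpha^{-1}\sup_\theta\{\theta(\alpha x)-\Lambda_a(\theta)\}=\alpha^{-1}J_a(\alpha x).
\]
For the LDP I would use contraction/rescaling rather than rerunning G\"artner--Ellis. Write \(N_I(L)/L\stackrel{\mathrm d}=\alpha^{-1}\,N_a(L')/L'\). By Theorem~\ref{thm:ldp-reference} and the contraction principle under the homeomorphism \(y\mapsto y/\alpha\) of \([0,\infty)\), the variables \(\alpha^{-1}N_a(L')/L'\) satisfy an LDP with speed \(L'\) and rate \(J_a(\alpha x)\). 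Changing the speed from \(L'\) to \(L=\alpha L'\) multiplies the rate by \(\alpha^{-1}\), because \(\frac1L\log P=\alpha^{-1}\frac1{L'}\log P\). Goodness is preserved under this homeomorphism and positive scaling. Alternatively one can invoke G\"artner--Ellis directly, since \(\Lambda_I\) inherits differentiability and steepness from \(\Lambda_a\).

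\textbf{Main obstacle.} The only delicate point is justifying the distributional identity with the correct boundary conventions. The windows are closed, and the image of \([0,L/\alpha]\times[1,a]\) under \(z\mapsto\alpha z\) is exactly \([0,L]\times[\alpha,\beta]\), so this is fine provided we use invariance of the whole point process \(\mathcal Z\) in law under dilations. That invariance follows from \(K_{\mathbb H}(\alpha z,\alpha w)\alpha^2=K_{\mathbb H}(z,w)\), since the Jacobian \(\alpha^2\) of the area measure compensates the kernel scaling, so all correlation functions match. I take this as given by Lemma~\ref{lem:count-scaling}. Everything else is bookkeeping of the change of variable \(L\mapsto L/\alpha\) inside uniform \(o(1)\) terms.
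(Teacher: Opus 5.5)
Your proposal is correct and follows essentially the same route as the paper. The distributional identity \(N_I(L)\stackrel{\mathrm d}=N_a(L/\alpha)\) from Lemma~\ref{lem:count-scaling} transfers the Szeg\H{o} limit, the strong expansion (giving \(\mathcal C_I=\mathcal C_a\)) and the LDP (rescaling sets by \(\alpha\) and changing speed via \(L=\alpha M\)) directly from the reference case.

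One small point the paper makes explicit that you leave implicit: \(\Lambda_I=\alpha^{-1}\Lambda_a\) is first obtained for real \(\theta\) and then extended to \(S_\pi\) by holomorphy. That extension is what justifies writing \(L\Lambda_I(\theta)\) in the complex strong expansion.
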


\subsection{\texorpdfstring{A closed-form example: \(a=2\)}{A closed-form example: a=2}}

For \(a=2\), corresponding to \(I=[1,2]\), the limiting log-mgf has the closed form
\[
\Lambda_2(\theta)
=
\frac{1}{2\pi}\operatorname{arctanh}^2
\sqrt{\frac{e^\theta-1}{e^\theta+3}},
\]
with either local square-root branch; the value is branch-independent. The associated rate function is parametrized by \(z\in i[0,1/\sqrt3]\cup[0,1)\), with continuous extension at \(z=0\), through
\[
x(z)
=
\frac{1+3z^2}{8\pi z}\operatorname{arctanh}z\ge 0,
\]
and 
\[J_2(x(z))
=
x(z)\log\left(\frac{1+3z^2}{1-z^2}\right)
-
\frac{1}{2\pi}\operatorname{arctanh}^2z.
\]
In particular, \(J_2\left(\frac{1}{8\pi}\right)=0\) and \(J_2(0)=\frac{\pi}{72}\). For general integer values of \(a\), a related dilogarithmic representation of
\(\Lambda_a\) is recorded in Example~\ref{ex:integer-shape-explicit}.

\begin{figure}[htbp]
\centering
\begin{tikzpicture}
\begin{groupplot}[
    group style={group size=2 by 1, horizontal sep=1.7cm},
    width=0.47\textwidth,
    height=0.36\textwidth,
    axis lines=middle,
    grid=both,
    minor tick num=1,
    xlabel style={below right},
    ylabel style={above left},
    tick label style={font=\small},
    label style={font=\small},
    every axis/.append style={
        line width=0.4pt,
    },
]

\nextgroupplot[
    xlabel={$\theta$},
    ylabel={$\Lambda_2(\theta)$},
    xmin=-6.5,
    xmax=5.5,
    ymin=-0.18,
    ymax=1.13,
    xtick={-4,0,4},
    samples=200,
]

\addplot[thick, myblue, domain=-6.5:-0.001]
    {-(atanrad(sqrt((1-exp(x))/(exp(x)+3)))^2)/(2*pi)};

\addplot[thick, myblue, domain=0.001:5.5]
    {(atanhr(sqrt((exp(x)-1)/(exp(x)+3)))^2)/(2*pi)};

\addplot[
    only marks,
    mark=*,
    mark size=2pt,
    myred,
    mark options={fill=myred,draw=myred}
] coordinates {(0,0)};

\node[anchor=north west, font=\small, myred] at (axis cs:0,0)
    {$0$};

\addplot[
    dashed,
    thick,
    mygreen,
    domain=-6:5.5,
    samples=2
] {-pi/72};

\node[anchor=south east, font=\small, mygreen, yshift=2pt]
    at (axis cs:5.5,{-pi/72}) {$-\frac{\pi}{72}$};

\nextgroupplot[
    xlabel={$x$},
    ylabel={},
    xmin=-0.03,
    xmax=0.32,
    ymin=-0.09,
    ymax=0.56,
    xtick={0,0.1,0.2,0.3},
    samples=200,
    clip=false,
]

\node[font=\small, anchor=south]
    at (axis description cs:0.5,1.03)
    {$J_2(x)$};

\addplot[thick, myorange, domain=0.001:0.5772]
(
    {((1-3*x^2)*atanrad(x))/(8*pi*x)},
    {(((1-3*x^2)*atanrad(x))/(8*pi*x))
      *ln((1-3*x^2)/(1+x^2))
      + (atanrad(x)^2)/(2*pi)}
);

\addplot[thick, myorange, domain=0.001:0.96]
(
    {((1+3*x^2)*atanhr(x))/(8*pi*x)},
    {(((1+3*x^2)*atanhr(x))/(8*pi*x))
      *ln((1+3*x^2)/(1-x^2))
      - (atanhr(x)^2)/(2*pi)}
);

\addplot[
    only marks,
    mark=*,
    mark size=2pt,
    myred,
    mark options={fill=myred,draw=myred}
] coordinates {(0,{pi/72})};

\node[anchor=south west, font=\small, myred]
    at (axis cs:0,{pi/72})
    {$\frac{\pi}{72}$};

\addplot[
    only marks,
    mark=*,
    mark size=2pt,
    myblue,
    mark options={fill=myblue,draw=myblue}
] coordinates {({1/(8*pi)},0)};

\node[anchor=north west, font=\small, myblue]
    at (axis cs:{1/(8*pi)},0)
    {$\frac{1}{8\pi}$};

\end{groupplot}
\end{tikzpicture}
\caption{The limiting log-mgf $\Lambda_2$ and the associated rate function $J_2$.}
\label{fig:Lambda-J-a2}
\end{figure}

\subsection{Background, related work, and proof strategy}

The large deviations part of the paper follows the G\"artner--Ellis route.
Once the limiting scaled logarithmic moment generating function has been
identified on the whole real line, the rate function is obtained by Legendre
duality. Classical large-deviation results include Cram\'er's theorem for sums of
independent variables \cite{Cramer1938}, Sanov's theorem for empirical measures
\cite{Sanov1961}, and Schilder's theorem for Wiener paths \cite{Schilder1966}. In this respect the result is analogous in mechanism to
Cram\'er-type large deviations \cite{Cramer1938}, while the variables here come from a strongly
dependent zero process rather than from independent sums.  We use the
G\"artner--Ellis theorem in the form of
\cite{Ellis1984,Gaertner1977}; for general references on large deviations, see
\cite{DemboZeitouni2010,DenHollander2000,DeuschelStroock1989,
DupuisEllis1997,Ellis2006,FengKurtz2006,Touchette2009,Varadhan1984}. 

The random input is a zero process of a Gaussian analytic function.  Gaussian
analytic functions and their zero sets form a rich class of random point
processes; see
\cite{HoughKrishnapurPeresVirag2009,Sodin2000} for general background.  The
surrounding literature includes random polynomial zeros
\cite{BogomolnyBohigasLeboeuf1992,EdelmanKostlan1995,Friedman1990,
Hammersley1956,Kostlan1992,Kostlan1993}, zeros of random holomorphic sections
\cite{BleherShiffmanZelditch2000,Shiffman2008,ShiffmanZelditch1999}, and
translation-invariant or rigid Gaussian zero processes
\cite{Feldheim2013,Feldheim2018,GhoshPeres2017}.  Against this background, the
present paper focuses on exponential-scale and strong-asymptotic questions for
long horizontal window counts in the Hardy--Szeg\H{o} half-plane model.

The determinantal structure of the planar Hardy--Szeg\H{o} zero process is the
main algebraic input.  General references on determinantal point processes
include
\cite{Berman2014,LavancierMollerRubak2015,Macchi1975,
ShiraiTakahashi2003,Soshnikov2000}.  In the present problem, however, the
determinantal structure is used before projection.  The planar generating
functional gives a Fredholm determinant on the window
\([0,L]\times[1,a]\), and a Fourier--Laplace factorization of the
Hardy--Szeg\H{o} kernel, together with Sylvester's identity, reduces it to a
one-dimensional trace-class determinant; see
Section~\ref{sec:model-reduction} and
Lemma~\ref{lem:reduced-operator}.  The resulting operator on \(L^2(0,\infty)\)
has kernel
\[
T_{a,L}(\xi,\eta)
=
\frac{\sqrt{\xi\eta}}{\pi}
X_L(\xi-\eta)H_a(\xi+\eta),
\qquad \xi,\eta>0,
\]
as in \eqref{eq:reduced-kernel}.  Fixed-power trace asymptotics for
\(T_{a,L}\), proved in Proposition~\ref{prop:trace-moment-limit}, give the
leading Szeg\H{o} limit.  We use standard trace-ideal tools, in the sense of
\cite{Simon2005}, to control the corresponding Fredholm determinants.

The strong \(O(1)\) term is connected with classical Szeg\H{o} and
Wiener--Hopf determinant theory.  The operator \(T_{a,L}\) is close to a
Wiener--Hopf model, but it is not itself of Wiener--Hopf form because of the
factor \(H_a(\xi+\eta)\).  We therefore compare it with a canonical
Wiener--Hopf operator obtained by replacing \(H_a(\xi+\eta)\) with the
separable diagonal approximation
\(
\sqrt{H_a(2\xi)H_a(2\eta)}.
\)
The second-order contact of these two factors along the diagonal yields a
uniform trace-class discrepancy; see
Proposition~\ref{prop:trace-class-discrepancy}.  The canonical Wiener--Hopf
part of the \(O(1)\) correction is obtained in
Proposition~\ref{prop:canonical-strong-szego}, while the remaining discrepancy
is shown in Proposition~\ref{prop:discrepancy-limit} to have a holomorphic
determinant limit.  This connects the proof to classical Szeg\H{o} theory
\cite{Szego1921,Szego1952}, Toeplitz and Wiener--Hopf determinant asymptotics
\cite{BottcherSilbermann1999,Widom1974,Widom1976,Widom1982}, and later
Fredholm determinant methods
\cite{BorodinOkounkov2000,DeiftItsZhou1997,DeiftItsKrasovsky2013}.

\medskip

\noindent\textbf{Organization.}
Section~\ref{sec:model-reduction} fixes the model, notation, and scaling
reduction to the normalized family \(I_a=[1,a]\), and proves the
Fourier--Laplace reduction to the one-dimensional trace-class operator
\(T_{a,L}\). Section~\ref{sec:trace-moments-szego} proves the trace-moment
limits and the leading Szeg\H{o} theorem. Section~\ref{sec:convexity-ldp}
proves convexity, steepness, Legendre duality, and the large deviations
theorem. Section~\ref{sec:strong-szego} proves the strong Szeg\H{o} expansion
through a canonical Wiener--Hopf comparison and a holomorphic discrepancy
limit, and derives the general bounded-interval consequences by scaling.
Section~\ref{sec:shape-family} develops analytic and asymptotic properties of
the scale-free family \(\Lambda_a\), \(J_a\), and \(\mathcal C_a\), \(a>1\).

\section{Model, scaling, and Fourier--Laplace reduction}
\label{sec:model-reduction}

This section fixes the model and notation, records the scaling reduction to the
reference intervals \(I_a=[1,a]\), \(a>1\), and derives the
Fourier--Laplace reduction to a one-dimensional trace-class operator.

\subsection{The Hardy--Szeg\H{o} zero process and projected counts}

We recall that \(Z=Z(f_{\mathbb H})\) denotes the Hardy--Szeg\H{o} zero
process on \((\mathbb H,\mathrm dA)\). It is determinantal with correlation
kernel \(K_{\mathbb H}(z,w)=-\{\pi(z-\overline w)^2\}^{-1}\),
\(z,w\in\mathbb H\), and is equivalently realized as the zero set of the
Hardy--Szeg\H{o} Gaussian analytic function introduced above. Its one-point
intensity is
\[
        K_{\mathbb H}(z,z)
        =
        \frac{1}{4\pi(\operatorname{Im}z)^2}.
\]

The covariance
\(\mathbb E[f_{\mathbb H}(z)\overline{f_{\mathbb H}(w)}]
=i/(z-\overline w)\) is invariant under simultaneous horizontal translations;
equivalently, \(K_{\mathbb H}(z+t,w+t)=K_{\mathbb H}(z,w)\) for
\(t\in\mathbb R\). Hence \(Z\) is stationary in the horizontal direction.

The zero process is also dilation-invariant: for every \(\alpha>0\),
\(\alpha Z\stackrel{\mathrm d}=Z\). Indeed,
\(\alpha^{-1/2}f_{\mathbb H}(z/\alpha)\) and \(f_{\mathbb H}(z)\) have the same
covariance kernel, hence the same law. Equivalently, this follows from the
homogeneity of \(K_{\mathbb H}\).

For a bounded interval \(I\subset(0,\infty)\), set
\(S_I:=\{x+iy\in\mathbb H:x\in\mathbb R,\ y\in I\}\). Then the projected Hardy--Szeg\H{o} process is
\[
        \mathcal X_I
        =
        \sum_{z\in Z\cap S_I}\delta_{\operatorname{Re}z}.
\]
For \(L>0\), set \(B_{I,L}:=[0,L]\times I\). By definition,
\[
N_I(L)=\mathcal X_I([0,L])=\#(Z\cap B_{I,L}).
\]
For the reference family
\(I_a=[1,a]\), \(a>1\), recall that
\(\mathcal X_a:=\mathcal X_{I_a}\) and \(N_a(L):=N_{I_a}(L)\).

Let \(I=[\alpha,\beta]\subset(0,\infty)\). From \cite{AiGuoZhou2026}, the
projected process \(\mathcal X_I\) is a simple stationary point process on
\(\mathbb R\), with intensity
\(\lambda_I=(4\pi)^{-1}(\alpha^{-1}-\beta^{-1})\). In particular,
\(\mathcal X_a\) has intensity
\[
        \lambda_a
        =
        \frac{1}{4\pi}\left(1-\frac1a\right).
\]

By the dilation invariance of the planar zero process, we obtain the following
scaling relation for projected counts.

\begin{lemma}
\label{lem:count-scaling}
Let \(I=[\alpha,\beta]\subset(0,\infty)\), and set \(a=\beta/\alpha>1\). Then, for every \(L>0\),
\[
N_I(L)\stackrel{\mathrm d}=N_a(L/\alpha),\qquad \Lambda_{I,L}(\theta)
=
\alpha^{-1}\Lambda_{a,L/\alpha}(\theta).
\]
\end{lemma}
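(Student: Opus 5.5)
The plan is to use the dilation invariance $\alpha Z\stackrel{\mathrm d}=Z$, established just above, and track how the counting window transforms. Write $I=[\alpha,\beta]=\alpha I_a$ with $a=\beta/\alpha$. The key observation is the set identity
\[
B_{I,L}=[0,L]\times[\alpha,\beta]=\alpha\bigl([0,L/\alpha]\times[1,a]\bigr)=\alpha B_{I_a,L/\alpha},
\]
where multiplication by $\alpha>0$ acts on $\mathbb H$ as the real dilation $z\mapsto\alpha z$. This map sends real parts to real parts and imaginary parts to imaginary parts, each scaled by $\alpha$.

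First I would write
\[
N_I(L)=\#(Z\cap\alpha B_{I_a,L/\alpha})=\#\bigl(\alpha^{-1}Z\cap B_{I_a,L/\alpha}\bigr).
\]
Since $\alpha^{-1}Z\stackrel{\mathrm d}=Z$, and the map sending a locally finite configuration to its count in a fixed bounded Borel set is measurable, it follows that
\[
N_I(L)\stackrel{\mathrm d}=\#(Z\cap B_{I_a,L/\alpha})=N_a(L/\alpha).
\]
Boundary effects on the closed rectangles need no special care: the same fixed set is used on both sides, and equality in law of the point processes transfers directly to their counts.

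For the log-mgf, equality in distribution gives
\[
\mathbb E\exp(\theta N_I(L))=\mathbb E\exp(\theta N_a(L/\alpha))
\]
for every $\theta$, including the case where both sides are infinite. Finiteness in fact follows from the Fredholm determinant representation, but it is not needed for the identity itself. Taking logarithms and dividing by $L$ gives
\[
\Lambda_{I,L}(\theta)=\frac1L\log\mathbb E e^{\theta N_a(L/\alpha)}=\frac{1}{\alpha}\cdot\frac{1}{L/\alpha}\log\mathbb E e^{\theta N_a(L/\alpha)}=\alpha^{-1}\Lambda_{a,L/\alpha}(\theta).
\]

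There is no real obstacle here. The only point needing justification is dilation invariance itself, which the paper records earlier. It holds because $\alpha^{-1/2}f_{\mathbb H}(z/\alpha)$ has covariance
\[
\alpha^{-1}\,\frac{i}{z/\alpha-\overline w/\alpha}=\frac{i}{z-\overline w},
\]
so it equals $f_{\mathbb H}$ in law. Its zero set is $\alpha Z$, and hence $\alpha Z\stackrel{\mathrm d}=Z$.
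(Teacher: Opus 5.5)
Your proof is correct and is essentially the paper's argument: the paper states this lemma as a direct consequence of the dilation invariance $\alpha Z\stackrel{\mathrm d}=Z$ and gives no further proof. Your set identity $B_{I,L}=\alpha B_{a,L/\alpha}$ and the normalization step $L^{-1}=\alpha^{-1}(L/\alpha)^{-1}$ supply exactly the details it leaves out.
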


\subsection{Laplace profiles and symbols}

For the Fourier--Laplace reduction below, we associate to a bounded interval
\(I\subset(0,\infty)\) the Laplace profile
\(H_I(s):=\int_I e^{-sy}\,\mathrm dy\) and the symbol
\(r_I(\xi):=2\xi H_I(2\xi)\), \(s,\xi>0\).

For the reference interval \(I_a=[1,a]\), we write
\[
        H_a(s):=H_{I_a}(s)
        =
        \int_1^a e^{-sy}\,\mathrm dy
        =
        \frac{e^{-s}-e^{-as}}{s},
\]
and
\begin{equation}
\label{eq:normalized-symbol}
        r_a(\xi):=r_{I_a}(\xi)
        =
        2\xi H_a(2\xi)
        =
        e^{-2\xi}-e^{-2a\xi}.
\end{equation}
If \(I=[\alpha,\beta]=\alpha[1,a]\), with \(a=\beta/\alpha\), then
\(H_I(s)=\alpha H_a(\alpha s)\) and \(r_I(\xi)=r_a(\alpha\xi)\).

We record the following basic properties of \(r_a\), used repeatedly below.

\begin{lemma}
\label{lem:basic-symbol-properties}
Let \(a>1\) and \(r_a(\xi)=e^{-2\xi}-e^{-2a\xi}\), \(\xi>0\). Then
\(0<r_a(\xi)<1\) for every \(\xi>0\), and
\[
        \rho_a:=\sup_{\xi>0}r_a(\xi)
        =
        (a-1)a^{-a/(a-1)}<1.
\]
Moreover, \(r_a\in L^1(0,\infty)\),
\begin{equation}
\label{eq:basic-symbol-integral}
        \int_0^\infty r_a(\xi)\,\mathrm d\xi
        =
        \frac12\left(1-\frac1a\right),
\end{equation}
and
\begin{equation}
\label{eq:basic-symbol-variance-integral}
        \int_0^\infty r_a(\xi)(1-r_a(\xi))\,\mathrm d\xi
        =
        \frac{(a-1)(a+3)}{4a(a+1)}.
\end{equation}
Finally, \(r_a(\xi)=2(a-1)\xi+O(\xi^2)\) as \(\xi\downarrow0\), and \(r_a\)
decays exponentially as \(\xi\to\infty\).
\end{lemma}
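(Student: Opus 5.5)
The statement is elementary, so I will verify each claim directly from the explicit formula \(r_a(\xi)=e^{-2\xi}-e^{-2a\xi}\). I will argue in four steps: positivity and the upper bound, the supremum, the two integrals, and the behaviour at the endpoints.

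\emph{Positivity and the bound \(r_a<1\).} Since \(a>1\), we have \(e^{-2a\xi}<e^{-2\xi}\) for \(\xi>0\), so \(r_a(\xi)>0\). The bound \(r_a(\xi)<e^{-2\xi}<1\) is immediate.

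\emph{The supremum \(\rho_a\).} I will substitute \(t=e^{-2\xi}\in(0,1)\), which turns \(r_a\) into \(g(t)=t-t^a\). Then \(g'(t)=1-at^{a-1}\) vanishes only at \(t_*=a^{-1/(a-1)}\), and this critical point is a maximum. Its value is
\[
g(t_*)=t_*\bigl(1-t_*^{a-1}\bigr)=a^{-1/(a-1)}\Bigl(1-\frac1a\Bigr)=(a-1)a^{-a/(a-1)},
\]
using \(a^{-1/(a-1)}\cdot a^{-1}=a^{-a/(a-1)}\). Since \(t_*\in(0,1)\), the supremum is attained, and \(\rho_a<1\) follows from the pointwise bound \(r_a<1\).

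\emph{The integrals.} The first integral is direct:
\[
\int_0^\infty r_a(\xi)\,\mathrm d\xi=\frac12-\frac1{2a}.
\]
For the second, I expand
\[
r_a^2=e^{-4\xi}-2e^{-2(a+1)\xi}+e^{-4a\xi},
\]
whose integral is
\[
\frac14-\frac1{a+1}+\frac1{4a}.
\]
Subtracting this from \(\tfrac12-\tfrac1{2a}\) gives
\[
\frac14-\frac{3}{4a}+\frac1{a+1}.
\]
Putting everything over the common denominator \(4a(a+1)\), the numerator is
\[
a(a+1)-3(a+1)+4a=a^2+2a-3=(a-1)(a+3),
\]
which is the stated value.

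\emph{Endpoint behaviour.} A Taylor expansion at \(\xi=0\) gives
\[
e^{-2\xi}-e^{-2a\xi}=(1-2\xi)-(1-2a\xi)+O(\xi^2)=2(a-1)\xi+O(\xi^2).
\]
Exponential decay at infinity follows from \(0<r_a\le e^{-2\xi}\), and integrability (\(r_a\in L^1(0,\infty)\)) follows from the same bound.

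I do not expect a genuine obstacle anywhere. The only step needing care is the algebra in the variance integral and the identification of the maximum value.
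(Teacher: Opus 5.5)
Your proof is correct and follows essentially the same route as the paper: each claim is verified directly from the explicit formula. The substitution \(t=e^{-2\xi}\) is just a convenient way to carry out the paper's ``direct differentiation'' for the maximum, and you rightly note that the supremum is attained, which is what makes \(\rho_a<1\) follow from the strict pointwise bound \(r_a<1\).
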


\begin{proof}
The inequalities \(0<r_a(\xi)<1\) are immediate from
\(0<e^{-2a\xi}<e^{-2\xi}<1\). A direct differentiation shows that \(r_a\) has
a unique maximizer and gives
\(\sup_{\xi>0}r_a(\xi)=(a-1)a^{-a/(a-1)}<1\). The integral identities
\eqref{eq:basic-symbol-integral} and \eqref{eq:basic-symbol-variance-integral}
follow by direct calculation. The expansion at the origin follows from
Taylor's formula, and exponential decay from \(0<r_a(\xi)\le e^{-2\xi}\).
\end{proof}

\subsection{The determinantal generating functional}

We now derive the Fredholm determinant representation for the projected strip count. Fix \(a>1\), \(I_a=[1,a]\), and
\(B_{a,L}:=[0,L]\times I_a\subset\mathbb H\); recall that
\(N_a(L)=\mathcal X_a([0,L])=\#(Z\cap B_{a,L})\). The reduction uses the
determinantal generating functional, the Fourier--Laplace factorization of the
Hardy--Szeg\H{o} kernel, and Sylvester's determinant identity. It transfers the planar determinant on \(L^2(B_{a,L},\mathrm dA)\) to a
determinant on \(L^2(0,\infty)\), involving the trace-class operator
\begin{equation}
\label{eq:reduced-kernel}
        T_{a,L}(\xi,\eta)
        =
        \frac{\sqrt{\xi\eta}}{\pi}
        X_L(\xi-\eta)H_a(\xi+\eta),
        \qquad \xi,\eta>0,
\end{equation}
where \(X_L(s):=\int_0^L e^{-isx}\,\mathrm dx\) and
\(H_a(s):=\int_1^a e^{-sy}\,\mathrm dy\). Throughout, inner products are conjugate-linear in the first argument and linear in the second.

We first record the generating-functional identity for strip counts in a
slightly more general joint form.

\begin{lemma}
\label{lem:generating-functional}
Let \(B_1,\ldots,B_m\subset\mathbb R\) be bounded pairwise disjoint Borel
sets, set \(A_j:=B_j\times I_a\) and \(A:=\bigcup_{j=1}^m A_j\). Let \(K_A\)
be the integral operator on \(L^2(A,\mathrm dA)\) with kernel
\(K_{\mathbb H}\) restricted to \(A\times A\). Then \(K_A\) is a positive
trace-class contraction, \(0\le K_A\le \operatorname{Id}\). Moreover, for
\(s=(s_1,\ldots,s_m)\in\mathbb C^m\), define
\[
        g_s(z):=\sum_{j=1}^m(s_j-1)\mathbf 1_{A_j}(z),
        \qquad z\in A,
\]
and let \(M_{g_s}\) be multiplication by \(g_s\) on \(L^2(A,\mathrm dA)\).
Then \(M_{g_s}K_A\) is trace class and
\begin{equation}
\label{eq:joint-pgf-determinantal}
        \mathbb E\left[\prod_{j=1}^m s_j^{\#(Z\cap A_j)}\right]
        =
        \det\left(\operatorname{Id}+M_{g_s}K_A\right),
\end{equation}
where the determinant is the Fredholm determinant on \(L^2(A,\mathrm dA)\).
\end{lemma}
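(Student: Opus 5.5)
We prove the three claims in the order stated: positivity and the contraction bound, then the trace-class property, then the determinant identity by approximation from bounded $A$.

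\emph{Positivity and contraction.} We use the Fourier--Laplace representation
\[
K_{\mathbb H}(z,w)=-\frac{1}{\pi(z-\overline w)^2}=\frac1\pi\int_0^\infty t\,e^{it(z-\overline w)}\,\mathrm dt,
\qquad z,w\in\mathbb H,
\]
which converges absolutely because $\operatorname{Im}(z-\overline w)>0$. Set $e_t(z):=\sqrt{t/\pi}\,e^{itz}$. Then $K_{\mathbb H}(z,w)=\int_0^\infty e_t(z)\overline{e_t(w)}\,\mathrm dt$. Define $V:L^2(A)\to L^2((0,\infty),\mathrm dt)$ by
\[
(Vf)(t)=\int_A \overline{e_t(w)}f(w)\,\mathrm dA(w).
\]
This gives $K_A=V^*V$, so $K_A\ge0$.

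For the bound $K_A\le\operatorname{Id}$, note that $K_{\mathbb H}$ is the reproducing kernel of the Bergman space $A^2(\mathbb H)$. Hence $K_{\mathbb H}$, viewed as an operator on $L^2(\mathbb H)$, is the orthogonal Bergman projection $P$. Then $K_A=\mathbf 1_APP\mathbf 1_A=(P\mathbf 1_A)^*(P\mathbf 1_A)$, which gives $0\le K_A\le\operatorname{Id}$.

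\emph{Trace class.} $A$ lies in the strip $\mathbb R\times[1,a]$, where $K_{\mathbb H}(z,z)=1/(4\pi(\operatorname{Im}z)^2)$ is bounded, and $A$ has finite area. So $\int_A K_{\mathbb H}(z,z)\,\mathrm dA<\infty$. Since $K_A=V^*V$ and $\|V^*\|_{HS}^2=\int_A\int_0^\infty|e_t|^2=\int_A K(z,z)$, the operator $V$ is Hilbert--Schmidt. Therefore $K_A$ is trace class, with trace equal to the integral of the diagonal. As $M_{g_s}$ is bounded, $M_{g_s}K_A$ is also trace class.

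\emph{Determinant identity.} This is the standard generating-functional formula for determinantal point processes, applied to the locally trace-class kernel restricted to $A$. We prove it first for $\max|s_j|$ small or for $s$ in a bounded set. By the correlation-function definition,
\[
\mathbb E\prod_j(1+(s_j-1))^{\#(Z\cap A_j)}
=\mathbb E\prod_{z\in Z\cap A}(1+g_s(z))
=\sum_{n\ge0}\frac1{n!}\int_{A^n}\prod_{k=1}^n g_s(z_k)\det[K(z_k,z_l)]\,\mathrm dA^n .
\]
The right-hand side is the Fredholm series for $\det(\operatorname{Id}+M_{g_s}K_A)$. Exchanging sum and expectation requires absolute convergence. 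Hadamard's inequality gives $\det[K(z_k,z_l)]\le\prod_k K(z_k,z_k)$ for the positive-semidefinite kernel, so the $n$-th term is bounded by $\frac{1}{n!}\bigl(\max_j|s_j-1|\operatorname{tr}K_A\bigr)^n$. The series therefore converges absolutely for every $s\in\mathbb C^m$, and both sides are entire in $s$.

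The key technical point is justifying the expansion $\prod_z(1+g(z))=\sum_n\frac1{n!}\sum_{\text{distinct }n\text{-tuples}}\prod g$ inside the expectation. It holds pointwise because $\#(Z\cap A)<\infty$ almost surely, since its mean equals $\operatorname{tr}K_A$. Dominated convergence then requires
\[
\mathbb E\Bigl[\sum_n\frac{|g|_\infty^n}{n!}\,\#\{\text{$n$-tuples}\}\Bigr]=\mathbb E(1+|g|_\infty)^{\#(Z\cap A)}<\infty .
\]
This follows from the same factorial-moment bound, since factorial moments satisfy $\mathbb E[(N)_n]\le(\operatorname{tr}K_A)^n$. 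We then conclude by analytic continuation in $s$, if needed.

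Because the expansion is valid for all $s$ by these bounds, this is the only delicate step. It is handled by the Hadamard estimate together with the finiteness of $\operatorname{tr}K_A$.
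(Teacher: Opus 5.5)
Your proof is correct, but it takes a more self-contained route than the paper. The paper's route is as follows:
\begin{itemize}
\item It obtains $0\le K_A\le\operatorname{Id}$ by citing the general fact that a Hermitian correlation kernel of a determinantal process is a positive contraction (HKPV, Theorem~4.5.5).
\item It proves the determinant identity only for $s$ near $(1,\ldots,1)$, by invoking the Shirai--Takahashi generating-functional theorem.
\item It extends the identity to all $s\in\mathbb C^m$ by the identity theorem. Entireness of the probabilistic side comes from representing $\#(Z\cap A)$ as a sum of independent Bernoulli variables (HKPV, Theorem~4.5.3).
\end{itemize}
You instead identify $K_{\mathbb H}$ with the Bergman kernel of $\mathbb H$. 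This is correct: it is the transport of $\pi^{-1}(1-z\overline w)^{-2}$ under the Cayley map, so $K_A$ is a compression of an orthogonal projection. You then expand $\prod_{z\in Z\cap A}(1+g_s(z))$ over distinct tuples and take expectations via the joint intensities $\det[K(z_k,z_l)]$. The interchange is justified by Hadamard's inequality, which gives $\mathbb E(1+c)^{\#(Z\cap A)}\le e^{c\operatorname{tr}K_A}$. This yields the identity for every $s\in\mathbb C^m$ at once. So the analytic-continuation step you mention is unnecessary, and the Hadamard bound replaces the Bernoulli representation as the source of exponential moments. In effect you reprove the special case of Shirai--Takahashi that the paper cites: the paper's version is shorter, while yours avoids three external theorems. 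The one step you assert without justification is that $\sum_n\frac{1}{n!}\int_{A^n}\prod_k g_s(z_k)\det[K(z_k,z_l)]\,\mathrm dA^n$ equals the Fredholm determinant $\det(\operatorname{Id}+M_{g_s}K_A)$. Because the kernel $g_s(z)K_{\mathbb H}(z,w)$ is discontinuous, the continuous-kernel version of this fact does not apply verbatim. You should cite it or derive it, for example from the factorization $M_{g_s}K_A=(M_{g_s}V^*)V$ with $V$ Hilbert--Schmidt, which lets you compute $\operatorname{tr}\wedge^n(M_{g_s}K_A)$ by the Hilbert--Schmidt trace formula.
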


\begin{proof}
Since \(A\) has finite area and is bounded away from the real axis,
\(\operatorname{tr}(K_A)=\int_A K_{\mathbb H}(z,z)\,\mathrm dA(z)<\infty\).
Thus \(K_A\) is trace class; as a compression of the correlation operator of
\(Z\), it is a positive contraction by
\cite[Theorem~4.5.5]{HoughKrishnapurPeresVirag2009}. Since \(g_s\) is bounded,
\(M_{g_s}K_A\) is trace class.

For \(s\) near \((1,\ldots,1)\), choose the principal logarithm and set
\[
        f_s(z):=-\sum_{j=1}^m(\log s_j)\mathbf 1_{A_j}(z).
\]
Then \(e^{-f_s}=1+g_s\). With \(\alpha=-1\), the determinantal generating-functional formula
\cite[Theorems 1.2 and 1.5]{ShiraiTakahashi2003} gives
\[
        \mathbb E\left[\prod_{z\in Z\cap A}(1+g_s(z))\right]
        =
        \det\left(\operatorname{Id}+M_{g_s}K_A\right).
\]
Since \(\prod_{z\in Z\cap A}(1+g_s(z))
=\prod_{j=1}^m s_j^{\#(Z\cap A_j)}\), \eqref{eq:joint-pgf-determinantal}
holds near \((1,\ldots,1)\).

Both sides are entire in \(s\). The determinant side is entire because
\(s\mapsto M_{g_s}K_A\) is affine with values in the trace-class ideal. On the
probabilistic side, \(\#(Z\cap A)\) has the law of a sum of independent
Bernoulli variables with parameters given by the eigenvalues of \(K_A\)
\cite[Theorem~4.5.3]{HoughKrishnapurPeresVirag2009}, so
\(\mathbb E R^{\#(Z\cap A)}<\infty\) for every \(R>0\), and dominated
convergence gives entire dependence on \(s\). The identity theorem completes
the proof.
\end{proof}

The single-count case of Lemma~\ref{lem:generating-functional} gives the Fredholm determinant used below.

\begin{corollary}
\label{cor:planar-count-determinant}
Let \(L>0\), and let \(K_{B_{a,L}}\) be the integral operator on \(L^2(B_{a,L},\mathrm{d}A)\) with kernel \(K_{\mathbb{H}}\) restricted to \(B_{a,L}\times B_{a,L}\). Then, for every \(\theta\in\mathbb{C}\),
\[
\mathbb{E}\exp(\theta N_a(L))
=
\det\left(\operatorname{Id}+(e^{\theta}-1)K_{B_{a,L}}\right).
\]
\end{corollary}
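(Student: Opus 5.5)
This is the single-count specialization of Lemma~\ref{lem:generating-functional}, so the plan is to apply that lemma with \(m=1\) and then pass from the variable \(s\) to \(\theta\).

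First, I take \(m=1\) and \(B_1=[0,L]\), which is a bounded Borel subset of \(\mathbb R\). Then \(A_1=A=[0,L]\times I_a=B_{a,L}\), and the restricted operator \(K_A\) is exactly \(K_{B_{a,L}}\). For \(s\in\mathbb C\) we have \(g_s\equiv s-1\) on \(A\), so \(M_{g_s}K_A=(s-1)K_{B_{a,L}}\). Formula \eqref{eq:joint-pgf-determinantal} then reads
\[
\mathbb E\left[s^{N_a(L)}\right]=\det\left(\operatorname{Id}+(s-1)K_{B_{a,L}}\right),
\]
and it holds for every \(s\in\mathbb C\), since the lemma states the identity for all \(s\in\mathbb C^m\) after its analytic continuation argument.

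Second, I substitute \(s=e^{\theta}\) with \(\theta\in\mathbb C\). Because \(N_a(L)\) takes nonnegative integer values, \(s^{N_a(L)}=e^{\theta N_a(L)}\) with no branch ambiguity, since only integer powers appear. The expectation is finite for every complex \(\theta\): the lemma's proof shows that \(N_a(L)=\#(Z\cap A)\) is a sum of independent Bernoulli variables, so \(\mathbb E\,R^{N_a(L)}<\infty\) for every \(R>0\), and in particular \(\mathbb E|e^{\theta N_a(L)}|=\mathbb E\,(e^{\operatorname{Re}\theta})^{N_a(L)}<\infty\). Substituting gives the claimed identity.

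There is no real obstacle here. The only point that needs care is that the lemma's identity holds on all of \(\mathbb C\), not just near \(s=1\), and this was already established in its proof through entirety of both sides and the identity theorem.
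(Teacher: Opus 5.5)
Your proposal is correct and matches the paper's proof, which simply applies Lemma~\ref{lem:generating-functional} with \(m=1\), \(B_1=[0,L]\), \(A_1=B_{a,L}\), and \(s_1=e^{\theta}\). Your extra remarks on integer powers and finiteness of \(\mathbb E|e^{\theta N_a(L)}|\) are valid but already covered by the lemma's statement for all \(s\in\mathbb C^m\).
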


\begin{proof}
Apply Lemma~\ref{lem:generating-functional} with \(m=1\), \(B_1=[0,L]\), \(A_1=B_{a,L}\), and \(s_1=e^{\theta}\).
\end{proof}

\subsection{Fourier--Laplace factorization and the reduced operator}

For \(z=x+iy\in\mathbb H\) and \(\xi>0\), set
\(u_\xi(z):=\sqrt{\xi/\pi}\,e^{i\xi z}
=\sqrt{\xi/\pi}\,e^{i\xi x}e^{-\xi y}\). Then
\begin{equation}
\label{eq:fourier-laplace-kernel}
        K_{\mathbb H}(z,w)
        =
        \int_0^\infty u_\xi(z)\overline{u_\xi(w)}\,\mathrm d\xi .
\end{equation}

\begin{lemma}
\label{lem:hilbert-schmidt-factorization}
Let \(L>0\) and \(a>1\). Define
\(\mathsf A_{a,L}:L^2(B_{a,L},\mathrm dA)\to L^2(0,\infty)\) by
\[
        (\mathsf A_{a,L}f)(\xi)
        :=
        \int_{B_{a,L}}\overline{u_\xi(z)}f(z)\,\mathrm dA(z).
\]
Then \(\mathsf A_{a,L}\) is Hilbert--Schmidt,
\(K_{B_{a,L}}=\mathsf A_{a,L}^*\mathsf A_{a,L}\), and \(K_{B_{a,L}}\) is a
positive trace-class operator with \(\operatorname{tr}(K_{B_{a,L}})=L\lambda_a\).
\end{lemma}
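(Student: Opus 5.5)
The plan is to compute the Hilbert--Schmidt norm of \(\mathsf A_{a,L}\) explicitly, identify the adjoint, and then read off the factorization and the trace from the integral representation \eqref{eq:fourier-laplace-kernel}.

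\textbf{Hilbert--Schmidt property.} The operator \(\mathsf A_{a,L}\) is an integral operator from \(L^2(B_{a,L},\mathrm dA)\) to \(L^2(0,\infty)\) with kernel \((\xi,z)\mapsto\overline{u_\xi(z)}\). Its Hilbert--Schmidt norm is therefore
\[
\|\mathsf A_{a,L}\|_{\mathrm{HS}}^2=\int_0^\infty\int_{B_{a,L}}|u_\xi(z)|^2\,\mathrm dA(z)\,\mathrm d\xi
=\int_0^\infty\frac{\xi}{\pi}\,L\int_1^a e^{-2\xi y}\,\mathrm dy\,\mathrm d\xi .
\]
By Tonelli, the right-hand side equals
\[
\frac{L}{\pi}\int_0^\infty \xi H_a(2\xi)\,\mathrm d\xi
=\frac{L}{2\pi}\int_0^\infty r_a(\xi)\,\mathrm d\xi
=\frac{L}{4\pi}\Bigl(1-\frac1a\Bigr),
\]
where the last step uses \eqref{eq:basic-symbol-integral}. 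The same quantity can be computed in the other order: integrating first in \(\xi\) gives \(\int_0^\infty \frac{\xi}{\pi}e^{-2\xi y}\,\mathrm d\xi=\frac{1}{4\pi y^2}=K_{\mathbb H}(z,z)\). This yields the same finite value, namely \(L\lambda_a\). In particular \(\mathsf A_{a,L}\) is Hilbert--Schmidt.

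\textbf{Factorization.} The adjoint is \((\mathsf A_{a,L}^*g)(z)=\int_0^\infty u_\xi(z)g(\xi)\,\mathrm d\xi\) for \(z\in B_{a,L}\). For \(f\in L^2(B_{a,L})\) we then have
\[
(\mathsf A_{a,L}^*\mathsf A_{a,L}f)(z)=\int_0^\infty u_\xi(z)\int_{B_{a,L}}\overline{u_\xi(w)}f(w)\,\mathrm dA(w)\,\mathrm d\xi .
\]
The order of integration may be exchanged by Fubini, provided \(\int_0^\infty\int_{B}|u_\xi(z)||u_\xi(w)||f(w)|\,\mathrm dA\,\mathrm d\xi<\infty\). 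For \(y,y'\ge1\) one has \(\int_0^\infty \xi e^{-\xi(y+y')}\,\mathrm d\xi=(y+y')^{-2}\le 1/4\), and \(B_{a,L}\) has finite area, so \(f\in L^1(B_{a,L})\) and the integral is finite. After the exchange, \eqref{eq:fourier-laplace-kernel} gives \(K_{B_{a,L}}=\mathsf A_{a,L}^*\mathsf A_{a,L}\). The identity \eqref{eq:fourier-laplace-kernel} itself is the elementary integral \(\int_0^\infty \frac{\xi}{\pi}e^{i\xi(z-\overline w)}\,\mathrm d\xi=-\frac{1}{\pi(z-\overline w)^2}\), valid because \(\operatorname{Im}(z-\overline w)>0\).

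\textbf{Trace and positivity.} As a product \(\mathsf A^*\mathsf A\) of a Hilbert--Schmidt operator with its adjoint, \(K_{B_{a,L}}\) is positive and trace class, with \(\operatorname{tr}K_{B_{a,L}}=\|\mathsf A_{a,L}\|_{\mathrm{HS}}^2=L\lambda_a\) by the computation in the first step. This is consistent with the direct formula \(\int_{B_{a,L}} K_{\mathbb H}(z,z)\,\mathrm dA(z)=\frac{L}{4\pi}(1-a^{-1})\). No step is a genuine obstacle; the only point needing care is the Fubini justification in the factorization step, and it is handled by the fact that \(B_{a,L}\) is bounded away from the real axis.
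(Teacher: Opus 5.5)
Your proof is correct and follows essentially the same route as the paper: compute \(\|\mathsf A_{a,L}\|_{\mathrm{HS}}^2=L\lambda_a\) by Tonelli, identify \(K_{B_{a,L}}=\mathsf A_{a,L}^*\mathsf A_{a,L}\) via Fubini and the Fourier--Laplace representation \eqref{eq:fourier-laplace-kernel}, and then read off positivity, trace-classness and the trace. The only cosmetic difference is that you verify the factorization pointwise on functions, with an explicit \(L^1\) domination, whereas the paper checks it through the sesquilinear identity \(\langle \mathsf A_{a,L}f,\mathsf A_{a,L}g\rangle=\langle K_{B_{a,L}}f,g\rangle\).
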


\begin{proof}
By Tonelli's theorem,
\[
        \|\mathsf A_{a,L}\|_{\mathrm{HS}}^2
        =
        \int_0^\infty\int_0^L\int_1^a
        \frac{\xi}{\pi}e^{-2\xi y}\,\mathrm dy\,\mathrm dx\,\mathrm d\xi
        =
        L\lambda_a,
\]
so \(\mathsf A_{a,L}\) is Hilbert--Schmidt. For
\(f,g\in L^2(B_{a,L},\mathrm dA)\), Fubini's theorem and
\eqref{eq:fourier-laplace-kernel} give
\[
\begin{aligned}
\langle \mathsf A_{a,L}f,\mathsf A_{a,L}g\rangle_{L^2(0,\infty)}
       &= \int_{B_{a,L}}\int_{B_{a,L}} \overline{f(w)}K_{\mathbb{H}}(w,z)g(z) \,\mathrm{d}A(w)\,\mathrm{d}A(z)\\
        &=
        \langle K_{B_{a,L}}f,g\rangle_{L^2(B_{a,L})},    
\end{aligned}
\]
using the Hermitian symmetry of \(K_{\mathbb H}\). Hence
\(K_{B_{a,L}}=\mathsf A_{a,L}^*\mathsf A_{a,L}\). Positivity,
trace-classness, and the trace identity follow from this Hilbert--Schmidt
factorization and the norm computation above.
\end{proof}

\begin{lemma}
\label{lem:reduced-operator}
Let \(a>1\) and \(L>0\). The operator \(T_{a,L}:=\mathsf{A}_{a,L}\mathsf{A}_{a,L}^{*}\) is positive trace class on \(L^2(0,\infty)\), satisfies \(0\leq T_{a,L}\leq \operatorname{Id}\), 
\(
\operatorname{tr}(T_{a,L})=L\lambda_a,
\)
and has kernel \eqref{eq:reduced-kernel}. Moreover, for every \(\theta\in\mathbb{C}\),
\begin{equation}
\label{eq:reduced-count-determinant}
\mathbb{E}\exp(\theta N_a(L))
=
\det\left(\operatorname{Id}+(e^{\theta}-1)T_{a,L}\right).
\end{equation}
\end{lemma}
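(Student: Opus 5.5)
The plan is to read off each claim from the Hilbert--Schmidt factorization in Lemma~\ref{lem:hilbert-schmidt-factorization}.

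\emph{Kernel formula.} Since \(\mathsf A_{a,L}\) is Hilbert--Schmidt, \(T_{a,L}=\mathsf A_{a,L}\mathsf A_{a,L}^*\) is trace class and positive. The adjoint acts by
\[
(\mathsf A_{a,L}^*h)(z)=\int_0^\infty u_\eta(z)h(\eta)\,\mathrm d\eta ,
\]
so \(T_{a,L}\) is an integral operator on \(L^2(0,\infty)\) with kernel
\[
T_{a,L}(\xi,\eta)=\int_{B_{a,L}}\overline{u_\xi(z)}u_\eta(z)\,\mathrm dA(z).
\]
Inserting \(u_\xi(z)=\sqrt{\xi/\pi}\,e^{i\xi x}e^{-\xi y}\) and using Fubini, which is justified because the integrand is absolutely integrable on the bounded box for fixed \(\xi,\eta\), the integral factorizes:
\[
\frac{\sqrt{\xi\eta}}{\pi}\int_0^L e^{-i(\xi-\eta)x}\,\mathrm dx\int_1^a e^{-(\xi+\eta)y}\,\mathrm dy
=\frac{\sqrt{\xi\eta}}{\pi}X_L(\xi-\eta)H_a(\xi+\eta).
\]
This is \eqref{eq:reduced-kernel}. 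One small care point is identifying \(T_{a,L}\) with this integral operator. I would handle it by pairing with compactly supported \(h,k\) and applying Fubini, then extending by density.

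\emph{Trace and spectral bounds.} The trace identity follows from
\[
\operatorname{tr}(\mathsf A\mathsf A^*)=\|\mathsf A\|_{\mathrm{HS}}^2=L\lambda_a,
\]
computed in Lemma~\ref{lem:hilbert-schmidt-factorization}. For the bound \(T_{a,L}\le\operatorname{Id}\), note that \(\mathsf A\mathsf A^*\) and \(\mathsf A^*\mathsf A=K_{B_{a,L}}\) have the same nonzero spectrum. Hence
\[
\|T_{a,L}\|=\|\mathsf A\|^2=\|K_{B_{a,L}}\|\le1
\]
by Lemma~\ref{lem:generating-functional}. Together with positivity, this gives \(0\le T_{a,L}\le\operatorname{Id}\).

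\emph{Determinant identity.} Apply Sylvester's identity
\[
\det(\operatorname{Id}+c\,\mathsf A^*\mathsf A)=\det(\operatorname{Id}+c\,\mathsf A\mathsf A^*),
\]
valid for Hilbert--Schmidt \(\mathsf A\) between different Hilbert spaces and any \(c\in\mathbb C\). Take \(c=e^\theta-1\) and combine with Corollary~\ref{cor:planar-count-determinant}; this yields \eqref{eq:reduced-count-determinant}. To justify Sylvester in this setting, I would cite the standard fact \(\det(\operatorname{Id}+AB)=\det(\operatorname{Id}+BA)\) for \(A,B\) with both products trace class, for instance from \cite{Simon2005}. Alternatively, both determinants equal \(\prod_k(1+c\mu_k)\), where \(\mu_k\) are the common nonzero eigenvalues with multiplicity, since nonzero eigenspaces are intertwined by \(\mathsf A\) and \(\mathsf A^*\). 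I expect this operator-theoretic justification to be the only mildly delicate step; the rest is direct computation.
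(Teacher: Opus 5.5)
Your proposal is correct and follows essentially the same route as the paper: you read the kernel off from \(\mathsf A_{a,L}\mathsf A_{a,L}^*\), transfer the bound \(0\le T_{a,L}\le\operatorname{Id}\) from \(K_{B_{a,L}}=\mathsf A_{a,L}^*\mathsf A_{a,L}\) via Lemma~\ref{lem:generating-functional}, and obtain the determinant identity from Corollary~\ref{cor:planar-count-determinant} together with Sylvester's identity. The only cosmetic difference is that you compute the trace directly as \(\|\mathsf A_{a,L}\|_{\mathrm{HS}}^2\), whereas the paper phrases this step through the equality of the nonzero spectra of \(\mathsf A_{a,L}^*\mathsf A_{a,L}\) and \(\mathsf A_{a,L}\mathsf A_{a,L}^*\).
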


\begin{proof}
Since \(\mathsf{A}_{a,L}\) is Hilbert--Schmidt, \(T_{a,L}\) is positive trace class. Lemmas~\ref{lem:hilbert-schmidt-factorization} and~\ref{lem:generating-functional} imply \(0\leq T_{a,L}\leq\operatorname{Id}\). For \(\xi,\eta>0\),
\[
\begin{aligned}
T_{a,L}(\xi,\eta)
&=
\int_{B_{a,L}}\overline{u_{\xi}(z)}u_{\eta}(z)\,\mathrm{d}A(z) \\
&=
\frac{\sqrt{\xi\eta}}{\pi}
\left(\int_0^L e^{-i(\xi-\eta)x}\,\mathrm{d}x\right)
\left(\int_1^a e^{-(\xi+\eta)y}\,\mathrm{d}y\right),
\end{aligned}
\]
which is \eqref{eq:reduced-kernel}. The trace formula follows from the equality of the nonzero spectra of \(\mathsf{A}_{a,L}^{*}\mathsf{A}_{a,L}\) and \(\mathsf{A}_{a,L}\mathsf{A}_{a,L}^{*}\), together with Lemma~\ref{lem:hilbert-schmidt-factorization}. Finally, Corollary~\ref{cor:planar-count-determinant} and Sylvester's determinant identity give \eqref{eq:reduced-count-determinant}.
\end{proof}

\section{Trace moments and the leading Szeg\H{o} limit}
\label{sec:trace-moments-szego}

This section proves the fixed-power trace asymptotics for \(T_{a,L}\), the main
input for the leading Szeg\H{o} limit:
\[
        \lim_{L\to\infty}
        \frac{1}{L}\operatorname{tr}(T_{a,L}^m)
        =
        \frac{1}{2\pi}\int_0^\infty r_a(\xi)^m\,\mathrm d\xi,
        \qquad m\ge1.
\]
A deterministic polynomial approximation then yields the logarithmic determinant
asymptotic in Theorem~\ref{thm:szego-limit-reference}.

\subsection{A convolution estimate}

The trace-moment proof uses the following integrable envelope.

\begin{lemma}
\label{lem:convolution-bound}
Let \(\varphi(t):=(1+|t|)^{-1}\), \(t\in\mathbb R\).  For \(d\ge1\), define
\(\varphi^{*1}:=\varphi\) and \(\varphi^{*(d+1)}:=\varphi^{*d}*\varphi\).
Then the convolution integrals are finite pointwise, and for every \(d\ge1\)
there exists \(C_d<\infty\) such that
\[
        (\varphi^{*d})(s)
        \le
        C_d
        \frac{\log^{d-1}(2+|s|)}{1+|s|},
        \qquad s\in\mathbb R.
\]
\end{lemma}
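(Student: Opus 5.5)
The proof is by induction on \(d\). The case \(d=1\) is trivial with \(C_1=1\). For the inductive step we assume the bound for \(\varphi^{*d}\), so that \(\varphi^{*(d+1)}(s)=\int_{\mathbb R}\varphi^{*d}(t)\varphi(s-t)\,\mathrm dt\) is dominated by
\[
        C_d\int_{\mathbb R}\frac{\log^{d-1}(2+|t|)}{1+|t|}\cdot\frac{1}{1+|s-t|}\,\mathrm dt .
\]
This integral is finite for each \(s\): the integrand is \(O(\log^{d-1}|t|/|t|^2)\) as \(|t|\to\infty\). This also gives pointwise finiteness of the convolutions. By symmetry (both functions are even), it suffices to treat \(s\ge0\). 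For \(s\) in a bounded set, say \(0\le s\le 2\), the integral is bounded by a constant, since \(1+|s-t|\) is comparable to \(1+|t|\) there. So the real task is to show the integral is at most \(C\,\log^{d}(2+s)/(1+s)\) for large \(s\).

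For \(s\ge2\) we split the \(t\)-line into three regions.

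\emph{Region (i): \(|t|\le s/2\).} Here \(1+|s-t|\ge 1+s/2\), so this part is at most
\[
        \frac{2}{1+s}\int_{|t|\le s/2}\frac{\log^{d-1}(2+|t|)}{1+|t|}\,\mathrm dt
        \lesssim \frac{\log^{d}(2+s)}{1+s},
\]
since \(\int_0^{R}\log^{d-1}(2+t)\,\mathrm dt/(1+t)\lesssim\log^d(2+R)\).

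\emph{Region (ii): \(|t-s|\le s/2\).} Here \(|t|\ge s/2\), so \(\log^{d-1}(2+|t|)/(1+|t|)\lesssim\log^{d-1}(2+s)/(1+s)\), while \(\int_{|u|\le s/2}\mathrm du/(1+|u|)\lesssim\log(2+s)\). This again gives a contribution \(\lesssim\log^d(2+s)/(1+s)\).

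\emph{Region (iii): the rest.} Here \(|t|\ge s/2\) and \(|s-t|\ge s/2\), and in fact \(1+|s-t|\gtrsim 1+|t|\) on this set. The integrand is therefore \(\lesssim\log^{d-1}(2+|t|)/(1+|t|)^2\), and integrating over \(|t|\ge s/2\) gives \(\lesssim\log^{d-1}(2+s)/(1+s)\).

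Summing the three contributions and absorbing the small-\(s\) constant yields the bound with some \(C_{d+1}\), which closes the induction. The main (mild) obstacle is the bookkeeping in region (iii), namely checking that \(1+|s-t|\gtrsim 1+|t|\) there. If \(t\le -s/2\), then \(|s-t|\ge|t|\). If \(t\ge 3s/2\), then \(|s-t|=t-s\ge t/3\). Apart from this, the proof only uses the elementary estimate \(\int_0^R\log^{k}(2+t)/(1+t)\,\mathrm dt\le \log^{k+1}(2+R)/(k+1)+O(1)\).
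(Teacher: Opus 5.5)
Your proof is correct and follows essentially the same route as the paper: induction on \(d\), with the inductive step being the weighted estimate \(\int_{\mathbb R}\log^{p}(2+|t|)\,(1+|t|)^{-1}(1+|s-t|)^{-1}\,\mathrm dt\le C_p\log^{p+1}(2+s)/(1+s)\), proved by splitting the \(t\)-integral. The only difference is bookkeeping: the paper splits into \(|t|\le 2s\), where it pulls out \(\log^p(2+|t|)\le C\log^p(2+s)\) and invokes a separately proved \(d=2\) bound, plus the tail \(|t|>2s\), whereas you localize directly around \(t=0\) and \(t=s\), which removes the need for the separate \(d=2\) step.
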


\begin{proof}
By evenness it suffices to consider \(s\ge0\). The case \(d=1\) is immediate.
For \(d=2\), a standard splitting of the convolution integral over
\((-\infty,0]\), \([0,s]\), and \([s,\infty)\) gives
\((\varphi*\varphi)(s)\le C\log(2+s)/(1+s)\).

We next use the auxiliary estimate
\[
        \int_{\mathbb R}
        \frac{\log^p(2+|t|)}
        {(1+|t|)(1+|s-t|)}
        \,\mathrm dt
        \le
        C_p
        \frac{\log^{p+1}(2+s)}{1+s},
        \qquad s\ge0,\quad p\ge0.
\]
To prove it, the case \(s=0\) is immediate. For \(s>0\), split the integral
into \(|t|\le2s\) and \(|t|>2s\). On \(|t|\le2s\), we use
\(\log(2+|t|)\le C\log(2+s)\) and the \(d=2\) estimate. On \(|t|>2s\),
\(|s-t|\ge |t|/2\), and the remaining tail is bounded by
\[
        C\int_s^\infty
        \frac{\log^p(2+u)}{(1+u)^2}\,\mathrm du
        \le
        C_p\frac{\log^p(2+s)}{1+s},
\]
by integration by parts and induction in \(p\), which is stronger than needed.
This proves the auxiliary estimate.

The stated bound now follows by induction on \(d\), applying the auxiliary
estimate with \(p=d-1\). Evenness gives the estimate for all \(s\in\mathbb R\),
and the same bounds imply pointwise finiteness.
\end{proof}

\subsection{Trace-moment asymptotics}

We now prove the fixed-power trace limits.

\begin{proposition}
\label{prop:trace-moment-limit}
Let \(a>1\). For every integer \(m\ge1\),
\[
        \lim_{L\to\infty}
        \frac{1}{L}\operatorname{tr}(T_{a,L}^m)
        =
        \frac{1}{2\pi}\int_0^\infty r_a(\xi)^m\,\mathrm d\xi .
\]
\end{proposition}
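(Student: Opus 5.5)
We will write the trace explicitly as a cyclic integral and extract the leading term by a change of variables. By \eqref{eq:reduced-kernel},
\[
\operatorname{tr}(T_{a,L}^m)
=
\pi^{-m}\int_{(0,\infty)^m}
\prod_{j=1}^m \xi_j\,
X_L(\xi_j-\xi_{j+1})H_a(\xi_j+\xi_{j+1})
\,\mathrm d\xi_1\cdots\mathrm d\xi_m,
\qquad \xi_{m+1}:=\xi_1,
\]
which is legitimate since \(T_{a,L}\) is positive trace class with a continuous kernel. For \(m=1\) the claim is just \(\operatorname{tr}T_{a,L}=L\lambda_a\) together with \eqref{eq:basic-symbol-integral}. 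For \(m\ge2\) we change variables to \(\xi_1=\xi\) and \(\xi_{j+1}=\xi+t_j/L\), \(j=1,\dots,m-1\). Writing \(X_L(s)=e^{-isL/2}\,2\sin(sL/2)/s\), we have \(X_L(t/L)=L\,\widehat\chi(t)\) with \(\widehat\chi(t)=\int_0^1e^{-itx}\,\mathrm dx\), and the phases telescope around the cycle. The Jacobian \(L^{-(m-1)}\) cancels against \(L^m\) from the \(m\) factors of \(X_L\), leaving an overall factor \(L\):
\[
\frac1L\operatorname{tr}(T_{a,L}^m)
=
\pi^{-m}\int_0^\infty\!\int_{\mathbb R^{m-1}}
\mathbf 1_{\{\xi_j>0\}}
\prod_{j}\xi_j H_a(\xi_j+\xi_{j+1})
\prod_j\widehat\chi(\tau_j)\,\mathrm dt\,\mathrm d\xi,
\]
where the \(\tau_j\) are the consecutive differences of \(0,t_1,\dots,t_{m-1},0\).

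\textbf{Pointwise limit.} For fixed \((\xi,t)\), as \(L\to\infty\) all \(\xi_j\to\xi\), so the integrand tends to
\[
\xi^mH_a(2\xi)^m\prod_j\widehat\chi(\tau_j)=2^{-m}r_a(\xi)^m\prod_j\widehat\chi(\tau_j).
\]
The \(t\)-integral of \(\prod_j\widehat\chi(\tau_j)\) is the cyclic convolution \(\widehat\chi^{*m}(0)\). By Fourier inversion this equals \((2\pi)^{m-1}\int_0^1\mathbf 1^m=(2\pi)^{m-1}\), using that \(\chi=\mathbf 1_{[0,1]}\) satisfies \(\chi^m=\chi\). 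Collecting constants gives
\[
\pi^{-m}2^{-m}(2\pi)^{m-1}\int_0^\infty r_a^m\,\mathrm d\xi=\frac{1}{2\pi}\int_0^\infty r_a^m\,\mathrm d\xi,
\]
as claimed. To avoid conditional-convergence issues, one can equivalently compute \(\int\prod\widehat\chi(\tau_j)\,\mathrm dt\) via \(\widehat\chi\in L^2\) and Parseval iteratively.

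\textbf{Domination (main obstacle).} The functions \(\widehat\chi(t)\) are only \(O((1+|t|)^{-1})\), not integrable, so dominated convergence is delicate. We will bound \(|\widehat\chi(\tau)|\le C\varphi(\tau)\) with \(\varphi\) as in Lemma~\ref{lem:convolution-bound}. The \(\xi\)-factors must supply decay. We use \(\xi_jH_a(\xi_j+\xi_{j+1})\le C\min(\xi_j,1)e^{-(\xi_j+\xi_{j+1})}\) and the fact that all \(\xi_j\) lie within \(\sum|t_j|/L\) of \(\xi\). After integrating out \(t_1,\dots,t_{m-2}\), Lemma~\ref{lem:convolution-bound} yields a bound of the form \(\varphi^{*(m-1)}(t_{m-1})\varphi(t_{m-1})\). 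This is integrable in \(t_{m-1}\) because of the \(\log^{m-2}(2+|s|)(1+|s|)^{-2}\) decay. It is not immediately uniform, however, since the positivity cutoffs and the \(\xi\)-weights depend on \(t\). Our first attempt will be to bound the \(\xi\)-dependent part by \(C e^{-\xi}\) uniformly on the region where all \(\xi_j>0\), using \(\xi_j\le\xi+|t|/L\) with \(e^{-\xi_j}\) absorbing that growth. Then the integrand is dominated by \(Ce^{-\xi}\prod_j\varphi(\tau_j)\), which is integrable by the estimate above.

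The \(\min(\xi_j,1)\) factors are bounded, so no singularity arises at \(0\); the cutoff indicators converge pointwise for \(\xi>0\). Dominated convergence then gives the limit. If the iterated-integrability bookkeeping is delicate, a fallback is to first prove the result for \(m=2\) directly and then use Hilbert--Schmidt-type comparisons \(\|T_{a,L}-W_L\|_{\mathrm{HS}}^2=o(L)\), where \(W_L\) is the operator with \(H_a(\xi+\eta)\) replaced by \(\sqrt{H_a(2\xi)H_a(2\eta)}\). Together with \(\|T\|\le1\), this transfers the trace moments to the Wiener--Hopf-type model, where the Kac--Akhiezer formula is standard.
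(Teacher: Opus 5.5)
Your proposal is correct and follows essentially the paper's proof: the cyclic trace formula, rescaling the consecutive differences by \(L\), dominated convergence with majorant \(Ce^{-2\xi}\prod_j\varphi(\tau_j)\), integrable by Lemma~\ref{lem:convolution-bound}. Your ``first attempt'' at this domination already works, since \(\prod_j e^{-(\xi_j+\xi_{j+1})}=e^{-2\sum_j\xi_j}\le e^{-2\xi}\) whenever all \(\xi_j>0\), so the Hilbert--Schmidt fallback is unnecessary. The only real difference is that you compute the constant \((2\pi)^{m-1}\) by iterating \(\widehat\chi*\widehat\chi=2\pi\widehat{\chi^2}=2\pi\widehat\chi\) under Fubini, which the same absolute integrability justifies, whereas the paper inserts a Gaussian regularization and uses an approximate identity.
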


\begin{proof}
The case \(m=1\) follows from Lemma~\ref{lem:reduced-operator} and
\eqref{eq:basic-symbol-integral}. Assume \(m\ge2\). Set
\(Y_L(s):=\int_{-L/2}^{L/2}e^{-isx}\,\mathrm dx\) and
\(g(u):=\int_{-1/2}^{1/2}e^{-iut}\,\mathrm dt=2\sin(u/2)/u\), with the usual
value at \(u=0\). Since \(X_L(s)=e^{-iLs/2}Y_L(s)\), the phases cancel in
cyclic products. Also \(|X_L|\le L\) and
\(H_a(\xi+\eta)\le (a-1)e^{-(\xi+\eta)}\), so the cyclic kernel product is
absolutely integrable. Hence the cyclic trace formula, applied to
\eqref{eq:reduced-kernel}, gives
\begin{align*}
\operatorname{tr}(T_{a,L}^m)
&= \frac{1}{\pi^m}
\int_{(0,\infty)^m}
\left(\prod_{j=1}^m\xi_j\right)
\left(\prod_{j=1}^mY_L(\xi_j-\xi_{j+1})\right) \\
&\quad \times
\left(\prod_{j=1}^mH_a(\xi_j+\xi_{j+1})\right)
\,\mathrm d\xi_1\cdots\mathrm d\xi_m,
\end{align*}
where \(\xi_{m+1}:=\xi_1\).

Put \(\xi:=\xi_1\), \(\delta_j:=\xi_j-\xi_{j+1}\), and
\(u_j:=L\delta_j\), \(1\le j\le m-1\). Then
\(\xi_k(\xi,u/L)=\xi-L^{-1}\sum_{j=1}^{k-1}u_j\), the Jacobian after the
\(u\)-change is \(L^{-(m-1)}\), and \(Y_L(u/L)=Lg(u)\). Thus
\begin{align*}
\frac{1}{L}\operatorname{tr}(T_{a,L}^m)
&= \frac{1}{\pi^m}
\int_0^\infty
\int_{\mathbb R^{m-1}}
\mathbf 1_{\{\xi_k(\xi,u/L)>0\ \forall k\}}
\Pi_L(\xi,u) \\
&\quad \times
\left(\prod_{j=1}^{m-1}g(u_j)\right)
g\left(-\sum_{j=1}^{m-1}u_j\right)
\,\mathrm du\,\mathrm d\xi,
\end{align*}
where
\[
\Pi_L(\xi,u)
:=
\prod_{k=1}^m
\xi_k(\xi,u/L)
H_a\left(\xi_k(\xi,u/L)+\xi_{k+1}(\xi,u/L)\right).
\]
For fixed \(\xi>0\) and \(u\in\mathbb R^{m-1}\), the indicator tends to \(1\)
and \(\Pi_L(\xi,u)\to \xi^mH_a(2\xi)^m\).

It remains to justify domination. Since \(H_a(t)\le e^{-t}/t\) for \(t>0\),
on the support of the indicator we have
\(0\le \xi_kH_a(\xi_k+\xi_{k+1})\le e^{-(\xi_k+\xi_{k+1})}\), hence
\(0\le\Pi_L(\xi,u)\le e^{-2\xi}\). Moreover,
\(|g(t)|\le C(1+|t|)^{-1}=C\varphi(t)\), and the resulting \(u\)-majorant
\[
        C_m
        \left(\prod_{j=1}^{m-1}\varphi(u_j)\right)
        \varphi\left(\sum_{j=1}^{m-1}u_j\right)
\]
is integrable on \(\mathbb R^{m-1}\) by Lemma~\ref{lem:convolution-bound}.
Dominated convergence therefore gives
\[
        \lim_{L\to\infty}
        \frac{1}{L}\operatorname{tr}(T_{a,L}^m)
        =
        \frac{1}{\pi^m}
        \left(\int_0^\infty \xi^mH_a(2\xi)^m\,\mathrm d\xi\right)U_m,
\]
where
\[
        U_m:=
        \int_{\mathbb R^{m-1}}
        \left(\prod_{j=1}^{m-1}g(u_j)\right)
        g\left(-\sum_{j=1}^{m-1}u_j\right)\,\mathrm du .
\]

It remains to compute \(U_m\). For \(\varepsilon>0\), insert the factor
\(\exp(-\varepsilon\sum_{j=1}^{m-1}u_j^2)\) in the definition of \(U_m\) and
call the resulting integral \(I_m(\varepsilon)\). The same domination gives
\(I_m(\varepsilon)\to U_m\). With
\(p_\varepsilon(s):=(4\pi\varepsilon)^{-1/2}e^{-s^2/(4\varepsilon)}\), Fubini's
theorem gives
\[
        I_m(\varepsilon)
        =
        (2\pi)^{m-1}
        \int_{-1/2}^{1/2}
        \left(
        \int_{-1/2}^{1/2}p_\varepsilon(t-t_m)\,\mathrm dt
        \right)^{m-1}
        \,\mathrm dt_m .
\]
Since \(p_\varepsilon\) is an approximate identity, the inner integral tends to
\(1\) for \(t_m\in(-1/2,1/2)\) and is bounded by \(1\). Hence
\(U_m=(2\pi)^{m-1}\). Substitution into the preceding limit, together with
\(r_a(\xi)=2\xi H_a(2\xi)\), completes the proof.
\end{proof}

\subsection{From trace moments to logarithmic determinants}

The next deterministic proposition converts fixed-power trace convergence into the leading logarithmic determinant asymptotic.

\begin{proposition}
\label{prop:logdet-szego-limit}
Let \((T_L)_{L\geq1}\) be positive trace-class contractions on a Hilbert space. Assume that
\begin{enumerate}[label=(\roman*)]
\item \(\sup_{L\geq1}L^{-1}\operatorname{tr}(T_L)<\infty\);
\item there is a measurable \(r:(0,\infty)\to[0,1]\) with \(r\in L^1(0,\infty)\) such that, for every \(m\geq1\),
\[
\lim_{L\to\infty}
\frac{1}{L}\operatorname{tr}(T_L^m)
=
\frac{1}{2\pi}
\int_0^\infty r(\xi)^m\,\mathrm{d}\xi.
\]
\end{enumerate}
Then, for each \(t>-1\),
\[
\lim_{L\to\infty}
\frac{1}{L}\log\det(\operatorname{Id}+tT_L)
=
\frac{1}{2\pi}
\int_0^\infty \log\bigl(1+t\,r(\xi)\bigr)\,\mathrm{d}\xi,
\]
locally uniformly for \(t\) in compact subsets of \((-1,\infty)\).
\end{proposition}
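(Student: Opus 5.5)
The plan is to recast the statement as weak convergence of normalized spectral measures, and then handle the logarithm, which is singular at the spectral edge, by a polynomial approximation. For each \(L\), let \(\lambda_j(L)\in[0,1]\) be the eigenvalues of \(T_L\) and define the finite positive measure
\[
\mu_L:=\frac1L\sum_j \lambda_j(L)\,\delta_{\lambda_j(L)}
\]
on \([0,1]\). Its total mass \(L^{-1}\operatorname{tr}(T_L)\) is bounded by (i). Define also \(\mu\) as the pushforward of \(\frac{1}{2\pi}r(\xi)\,\mathrm d\xi\) under \(\xi\mapsto r(\xi)\); its mass \(\frac{1}{2\pi}\int r\) is finite since \(r\in L^1\). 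Hypothesis (ii) says exactly that \(\int x^{k}\,\mathrm d\mu_L\to\int x^k\,\mathrm d\mu\) for all \(k\ge0\), taking \(m=k+1\). Since all measures live on the compact interval \([0,1]\) and have uniformly bounded mass, Weierstrass approximation upgrades this to \(\int\phi\,\mathrm d\mu_L\to\int\phi\,\mathrm d\mu\) for every \(\phi\in C([0,1])\).

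Next write \(\log\det(\operatorname{Id}+tT_L)=\sum_j\log(1+t\lambda_j)\), which is legitimate because \(1+t\lambda_j>0\) for \(t>-1\), \(\lambda_j\in[0,1]\), and \(T_L\) is trace class. Then
\[
\frac1L\log\det(\operatorname{Id}+tT_L)=\int \psi_t\,\mathrm d\mu_L,
\qquad \psi_t(x):=\frac{\log(1+tx)}{x},
\]
with \(\psi_t(0):=t\). For \(t>-1\), the function \(\psi_t\) is continuous on \([0,1]\), including at \(x=1\) because \(1+t>0\). On the other side,
\[
\int\psi_t\,\mathrm d\mu=\frac1{2\pi}\int_0^\infty \frac{\log(1+tr(\xi))}{r(\xi)}\,r(\xi)\,\mathrm d\xi=\frac1{2\pi}\int_0^\infty\log(1+tr(\xi))\,\mathrm d\xi,
\]
where the points with \(r=0\) contribute zero to both sides. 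This gives pointwise convergence for each \(t>-1\).

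Uniformity on a compact set \(K\subset(-1,\infty)\) is the step I expect to need the most care, and it is where the restriction \(t>-1\) is used. The map \((t,x)\mapsto\psi_t(x)\) is jointly continuous on \(K\times[0,1]\), so \(\{\psi_t\}_{t\in K}\) is a compact subset of \(C([0,1])\) in the sup norm. Given \(\varepsilon>0\), cover this set by finitely many sup-norm balls of radius \(\varepsilon\) centered at functions \(\psi_{t_i}\). The masses of \(\mu_L\) and \(\mu\) are bounded by some \(M\), so
\[
\Bigl|\int\psi_t\,\mathrm d\mu_L-\int\psi_t\,\mathrm d\mu\Bigr|\le 2M\varepsilon+\max_i\Bigl|\int\psi_{t_i}\,\mathrm d(\mu_L-\mu)\Bigr|,
\]
and the last term tends to \(0\) as \(L\to\infty\). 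Equivalently, one can note that \(t\mapsto\int\psi_t\,\mathrm d\mu_L\) is equi-Lipschitz on \(K\), since \(\partial_t\psi_t=(1+tx)^{-1}\) is bounded uniformly there.

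The only hidden subtlety is that the normalization by \(\lambda\) in \(\mu_L\) is needed. An unnormalized counting measure would have unbounded mass from the many eigenvalues near \(0\), whereas \(\lambda\,\delta_\lambda\) together with (i) keeps everything finite.
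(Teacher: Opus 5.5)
Your proof is correct and is essentially the paper's argument in measure-theoretic language. The paper approximates \(f_t(\lambda)=\log(1+t\lambda)\) by polynomials \(p_n\) with \(p_n(0)=0\) and error \(|f_t(\lambda)-p_n(\lambda)|\le\lambda\|f_t'-p_n'\|_\infty\). That amounts to uniformly approximating your \(\psi_t=f_t(\lambda)/\lambda\) against the \(\lambda\)-weighted spectral measure \(\mu_L\), whose mass is controlled by (i). The paper then gets local uniformity exactly as in your equi-Lipschitz remark, from the bound \(m_K^{-1}L^{-1}\operatorname{tr}(T_L)\) on the \(t\)-derivative together with continuity of the limit.
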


\begin{proof}
Fix \(t>-1\) and set \(f_t(\lambda):=\log(1+t\lambda)\) on \([0,1]\). Choose
polynomials \(q_n\to f_t'\) uniformly on \([0,1]\), and define
\(p_n(\lambda):=\int_0^\lambda q_n(s)\,\mathrm ds\). Then \(p_n(0)=0\),
\(p_n\to f_t\) in \(C^1([0,1])\), and, with \(h_n:=f_t-p_n\),
\(|h_n(\lambda)|\le \lambda\|h_n'\|_{\infty}\) for \(0\le\lambda\le1\).

Let \(\{\lambda_{L,j}\}_{j\ge1}\subset[0,1]\) be the eigenvalues of \(T_L\),
counted with multiplicity. The preceding estimate and the uniform trace bound give
\[
        \sup_{L\ge1}
        \left|
        \frac{1}{L}\operatorname{tr}(f_t(T_L))
        -
        \frac{1}{L}\operatorname{tr}(p_n(T_L))
        \right|
        \le
        \|h_n'\|_{\infty}
        \sup_{L\ge1}\frac{1}{L}\operatorname{tr}(T_L)
        \to0 .
\]
For fixed \(n\), \(p_n\) has no constant term, so the trace-moment assumptions imply
\[
        \lim_{L\to\infty}
        \frac{1}{L}\operatorname{tr}(p_n(T_L))
        =
        \frac{1}{2\pi}\int_0^\infty p_n(r(\xi))\,\mathrm d\xi .
\]
Since
\(|p_n(r(\xi))-f_t(r(\xi))|\le \|p_n'-f_t'\|_{\infty}r(\xi)\) and
\(r\in L^1(0,\infty)\), letting \(n\to\infty\) yields
\[
        \lim_{L\to\infty}
        \frac{1}{L}\operatorname{tr}(f_t(T_L))
        =
        \frac{1}{2\pi}\int_0^\infty f_t(r(\xi))\,\mathrm d\xi .
\]
For \(t>-1\), the spectral series is absolutely convergent and
\[
        \log\det(\operatorname{Id}+tT_L)
        =
        \sum_{j\ge1}\log(1+t\lambda_{L,j})
        =
        \operatorname{tr}(f_t(T_L)).
\]

For local uniformity, let \(K\subset(-1,\infty)\) be compact and set
\(m_K:=\min\{1,1+\inf_K t\}>0\). Differentiating the spectral series gives,
uniformly for \(t\in K\),
\[
        \left|
        \frac{\mathrm d}{\mathrm dt}
        \left(
        \frac{1}{L}\log\det(\operatorname{Id}+tT_L)
        \right)
        \right|
        \le
        \frac{1}{m_K}\frac{1}{L}\operatorname{tr}(T_L)
        \le C_K .
\]
Thus the normalized log-determinants are equicontinuous on \(K\). The limiting
function is continuous on \(K\), since
\[
        \left|\log(1+t r(\xi))-\log(1+s r(\xi))\right|
        \le
        \frac{|t-s|}{m_K}r(\xi),
        \qquad s,t\in K,
\]
and \(r\in L^1(0,\infty)\). Pointwise convergence, equicontinuity, and
compactness give uniform convergence on \(K\).
\end{proof}

\subsection{The leading Szeg\H{o} limit and holomorphic extension}

\begin{proposition}
\label{prop:leading-szego-limit}
Let \(a>1\). For every \(\theta\in\mathbb{R}\),
\[
\lim_{L\to\infty}
\frac{1}{L}
\log\mathbb{E}\exp(\theta N_a(L))
=
\Lambda_a(\theta),
\]
where
\[
\Lambda_a(\theta)
=
\frac{1}{2\pi}
\int_0^{\infty}
\log\left(1+(e^{\theta}-1)r_a(\xi)\right)
\,\mathrm{d}\xi.
\]
The convergence is locally uniform for \(\theta\in\mathbb{R}\).
\end{proposition}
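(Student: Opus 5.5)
The plan is to combine the three ingredients already established: the Fredholm determinant representation of Lemma~\ref{lem:reduced-operator}, the trace-moment limits of Proposition~\ref{prop:trace-moment-limit}, and the deterministic transfer principle of Proposition~\ref{prop:logdet-szego-limit}. Concretely, for \(\theta\in\mathbb R\) set \(t:=e^{\theta}-1\). Then \(t>-1\), and \eqref{eq:reduced-count-determinant} gives
\[
\log\mathbb E\exp(\theta N_a(L))=\log\det(\operatorname{Id}+tT_{a,L}).
\]
The left side is real because \(\mathbb E\exp(\theta N_a(L))>0\). On the right side, the eigenvalues \(\lambda\) of \(T_{a,L}\) lie in \([0,1]\), so every factor \(1+t\lambda\) is positive. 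Hence the logarithm is unambiguous and agrees with the spectral sum used in Proposition~\ref{prop:logdet-szego-limit}.

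Next I will verify the hypotheses of Proposition~\ref{prop:logdet-szego-limit} for \(T_L:=T_{a,L}\), \(L\ge1\), with \(r:=r_a\).
\begin{enumerate}[label=(\roman*)]
\item By Lemma~\ref{lem:reduced-operator}, \(T_{a,L}\) is a positive trace-class contraction. It also satisfies \(L^{-1}\operatorname{tr}(T_{a,L})=\lambda_a\), which is independent of \(L\); this gives the uniform trace bound.
\item Lemma~\ref{lem:basic-symbol-properties} shows that \(r_a\) is measurable, takes values in \((0,1)\subset[0,1]\), and lies in \(L^1(0,\infty)\).
\item Proposition~\ref{prop:trace-moment-limit} supplies the trace-moment limits for every \(m\ge1\), with exactly the normalization \(\frac{1}{2\pi}\int_0^\infty r_a^m\) that Proposition~\ref{prop:logdet-szego-limit} requires.
\end{enumerate}
Proposition~\ref{prop:logdet-szego-limit} then yields
\[
\frac1L\log\det(\operatorname{Id}+tT_{a,L})\;\longrightarrow\;\frac{1}{2\pi}\int_0^\infty\log(1+t\,r_a(\xi))\,\mathrm d\xi,
\]
locally uniformly for \(t\in(-1,\infty)\). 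Substituting \(t=e^\theta-1\) gives pointwise convergence to \(\Lambda_a(\theta)\).

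For local uniformity in \(\theta\), I will use that \(\theta\mapsto e^\theta-1\) is a continuous map from \(\mathbb R\) into \((-1,\infty)\). It therefore sends any compact \(K\subset\mathbb R\) onto a compact subset of \((-1,\infty)\). Uniform convergence in \(t\) on that image gives uniform convergence in \(\theta\) on \(K\).

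I expect no real obstacle here, since the substantive analysis sits in Proposition~\ref{prop:trace-moment-limit}. The only point needing care is that \(t\) must stay strictly inside \((-1,\infty)\) on compacts, so that the constant \(m_K\) in Proposition~\ref{prop:logdet-szego-limit} stays positive. This holds because \(e^\theta-1\ge e^{\min K}-1>-1\) for \(\theta\in K\).
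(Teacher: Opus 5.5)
Your proposal is correct and follows the same route as the paper: the Fredholm representation from Lemma~\ref{lem:reduced-operator}, the trace-moment limits of Proposition~\ref{prop:trace-moment-limit} fed into Proposition~\ref{prop:logdet-szego-limit} with \(t=e^\theta-1\), and local uniformity transferred because \(\theta\mapsto e^\theta-1\) sends compact subsets of \(\mathbb R\) into compact subsets of \((-1,\infty)\). Your added check that the logarithm is real and matches the spectral sum is a harmless detail that the paper leaves implicit.
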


\begin{proof}
By Lemma~\ref{lem:reduced-operator}, the exponential moment is the Fredholm determinant of \(T_{a,L}\). For real \(\theta\), set \(t=e^\theta-1>-1\). The operators \(T_{a,L}\) are positive trace-class contractions, their normalized traces are uniformly bounded, and Proposition~\ref{prop:trace-moment-limit} gives the fixed-power trace limits with the symbol \(r_a\). Proposition~\ref{prop:logdet-szego-limit} therefore yields the stated limit after substituting \(t=e^\theta-1\). Its local uniformity in \(t\) transfers to local uniformity in \(\theta\), since \(\theta\mapsto e^\theta-1\) maps compact subsets of \(\mathbb{R}\) into compact subsets of \((-1,\infty)\).
\end{proof}

\begin{proposition}
\label{prop:lambda-holomorphic-extension}
Let \(a>1\), and set \(S_{\pi}:=\{\theta\in\mathbb{C}:|\operatorname{Im}\theta|<\pi\}\). The function
\[
\Lambda_a(\theta)
=
\frac{1}{2\pi}
\int_0^{\infty}
\log\left(1+(e^{\theta}-1)r_a(\xi)\right)
\,\mathrm{d}\xi
\]
extends holomorphically to \(S_{\pi}\), where the logarithm is the branch obtained by analytic continuation from \(\theta=0\). Moreover,
\[
\Lambda_a'(0)
=
\frac{1}{4\pi}\left(1-\frac{1}{a}\right),
\qquad
\Lambda_a''(0)
=
\frac{(a-1)(a+3)}{8\pi a(a+1)}.
\]
\end{proposition}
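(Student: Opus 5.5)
The plan is to show that for \(\theta\in S_\pi\) the quantity \(1+(e^\theta-1)r\) never vanishes and stays uniformly away from the negative real axis when \(r\in[0,\rho_a]\). We then define the logarithm by the principal branch and verify holomorphy by differentiating under the integral sign.

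\textbf{Step 1: the argument stays in \((-\pi,\pi)\).} Write \(w=e^\theta\). Since \(|\operatorname{Im}\theta|<\pi\), \(w\) lies in \(\mathbb C\setminus(-\infty,0]\). For \(r\in[0,1]\), the point \(1+(w-1)r=(1-r)\cdot1+r\cdot w\) is a convex combination of \(1\) and \(w\). The segment joining \(1\) and \(w\) avoids \((-\infty,0]\): this is clear if \(w\) is real and positive, and otherwise the segment meets the real axis only at \(1\). Hence \(z(\theta,\xi):=1+(e^\theta-1)r_a(\xi)\) lies in \(\mathbb C\setminus(-\infty,0]\) for all \(\xi>0\).

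On this domain the principal logarithm is holomorphic. For fixed \(\xi\), the map \(\theta\mapsto \operatorname{Log} z(\theta,\xi)\) is holomorphic on \(S_\pi\) and equals \(0\) at \(\theta=0\). Because \(S_\pi\) is simply connected, this is exactly the branch obtained by analytic continuation from \(\theta=0\), and it agrees with the real logarithm for real \(\theta\).

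\textbf{Step 2: local uniform domination.} Fix a compact \(K\subset S_\pi\). We need a bound \(|\operatorname{Log} z(\theta,\xi)|\le C_K\, r_a(\xi)\). By compactness, \(\{e^\theta:\theta\in K\}\) lies in a compact set \(W\subset\mathbb C\setminus(-\infty,0]\). The set \(\{(1-r)+rw:\ r\in[0,1],\ w\in W\}\) is the continuous image of a compact set. By Step 1 it lies in \(\mathbb C\setminus(-\infty,0]\), so it is a compact subset of the slit plane. Consequently \(\operatorname{Log}\) is Lipschitz there with some constant \(M_K\), since its derivative \(1/z\) is bounded and the set is contained in a star-shaped neighbourhood around \(1\) (the segments from \(1\) stay in the domain).

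Because \(\operatorname{Log}1=0\), this gives \(|\operatorname{Log} z|\le M_K|e^\theta-1|\,r_a(\xi)\le C_K r_a(\xi)\). The majorant is integrable since \(r_a\in L^1(0,\infty)\) by Lemma~\ref{lem:basic-symbol-properties}. Holomorphy of \(\Lambda_a\) on \(S_\pi\) then follows from Morera's theorem combined with Fubini, or equivalently from the standard theorem on holomorphic parameter integrals with locally uniform integrable domination. I expect this uniform domination to be the only real obstacle. The convex-combination observation is what keeps it clean: no bound on \(\rho_a\) is needed.

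\textbf{Step 3: the derivatives at \(0\).} Set \(t=e^\theta-1\). Near \(\theta=0\), expand \(\log(1+t r)=tr-\tfrac12 t^2r^2+O(t^3)\), where the \(O(t^3)\) term is bounded by a constant times \(|t|^3 r\). Substituting \(t=\theta+\theta^2/2+O(\theta^3)\) gives
\[
\log(1+tr)=\theta r+\tfrac12\theta^2\bigl(r-r^2\bigr)+O(\theta^3 r).
\]
Integrating and dividing by \(2\pi\), with the error controlled by dominated convergence as in Step 2, yields
\[
\Lambda_a'(0)=\frac1{2\pi}\int_0^\infty r_a\,\mathrm d\xi,\qquad \Lambda_a''(0)=\frac1{2\pi}\int_0^\infty r_a(1-r_a)\,\mathrm d\xi .
\]
Inserting \eqref{eq:basic-symbol-integral} and \eqref{eq:basic-symbol-variance-integral} gives \(\frac1{4\pi}(1-\frac1a)\) and \(\frac{(a-1)(a+3)}{8\pi a(a+1)}\), respectively.
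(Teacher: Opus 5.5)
Your proof is correct and follows essentially the same route as the paper. Both arguments show that $1+(e^\theta-1)r$ stays in a compact subset of $\mathbb C\setminus(-\infty,0]$ for $\theta$ in a compact $K\subset S_\pi$, deduce the domination $|\log(1+(e^\theta-1)r_a(\xi))|\le C_K r_a(\xi)$, conclude holomorphy via Morera, and read off $\Lambda_a'(0)$ and $\Lambda_a''(0)$ from $\frac{1}{2\pi}\int r_a$ and $\frac{1}{2\pi}\int r_a(1-r_a)$. Your convex-combination and star-shapedness observation is a slightly cleaner justification of the Lipschitz bound than the paper's, and as you note, the restriction $\rho_a<1$ is not actually needed there.
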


\begin{proof}
Let \(\rho_a:=\sup_{\xi>0}r_a(\xi)\). By
Lemma~\ref{lem:basic-symbol-properties}, \(0<\rho_a<1\). For each compact
\(K\subset S_\pi\), the image of \(K\times[0,\rho_a]\) under
\((\theta,u)\mapsto 1+(e^\theta-1)u\) is disjoint from \((-\infty,0]\). Indeed,
if this image contained a non-positive real number, then either \(u=0\), which
gives \(1\), or \(u>0\), in which case the imaginary part forces
\(\sin(\operatorname{Im}\theta)=0\). Since \(\theta\in S_\pi\), this implies
\(\operatorname{Im}\theta=0\), and then \(1+(e^\theta-1)u=(1-u)+ue^\theta>0\).

Thus the logarithm is well defined on a neighborhood of this compact image.
Since it vanishes at \(1\) and is Lipschitz there,
\[
        \left|\log\left(1+(e^\theta-1)r_a(\xi)\right)\right|
        \le C_K r_a(\xi),\qquad \theta\in K,\ \xi>0.
\]
As \(r_a\in L^1(0,\infty)\), the integral converges locally uniformly on
\(S_\pi\), and Morera's theorem gives holomorphy. Differentiating under the
integral sign near \(0\), followed by
\eqref{eq:basic-symbol-integral} and
\eqref{eq:basic-symbol-variance-integral}, gives the displayed values of
\(\Lambda_a'(0)\) and \(\Lambda_a''(0)\).
\end{proof}

\begin{proof}[Proof of Theorem~\ref{thm:szego-limit-reference}]
The real-variable limit and local uniform convergence follow from Proposition~\ref{prop:leading-szego-limit}; the holomorphic extension to \(S_{\pi}\) and the formulas for \(\Lambda_a'(0)\) and \(\Lambda_a''(0)\) follow from Proposition~\ref{prop:lambda-holomorphic-extension}.
\end{proof}

\section{Convexity, Legendre duality, and large deviations}
\label{sec:convexity-ldp}

This section derives the large deviations principle from the Szeg\H{o} limit.
We analyze the convexity and slope behavior of \(\Lambda_a\), identify the
effective domain of its Legendre--Fenchel transform, and apply the
G\"artner--Ellis theorem.

\subsection{\texorpdfstring{Convexity and steepness of \(\Lambda_a\)}{Convexity and steepness of Lambda a}}

\begin{lemma}
\label{lem:convexity-steepness}
Let \(a>1\). The limiting log-moment generating function \(\Lambda_a\) belongs to \(C^\infty(\mathbb{R})\), with
\[
\Lambda_a'(\theta)
=
\frac{1}{2\pi}
\int_0^{\infty}
\frac{e^{\theta}r_a(\xi)}
{1+(e^{\theta}-1)r_a(\xi)}
\,\mathrm{d}\xi,
\]
\[
\Lambda_a''(\theta)
=
\frac{1}{2\pi}
\int_0^{\infty}
\frac{e^{\theta}r_a(\xi)(1-r_a(\xi))}
{\left(1+(e^{\theta}-1)r_a(\xi)\right)^2}
\,\mathrm{d}\xi .
\]
Moreover, \(\Lambda_a\) is strictly increasing and strictly convex on \(\mathbb{R}\), and
\begin{equation}
\label{eq:lambda-slope-limits}
\lim_{\theta\to-\infty}\Lambda_a'(\theta)=0,
\qquad
\lim_{\theta\to+\infty}\Lambda_a'(\theta)=+\infty.
\end{equation}
Consequently, the Legendre--Fenchel transform 
\(
\widetilde{J}_a(x):=\sup_{\theta\in\mathbb{R}}\{\theta x-\Lambda_a(\theta)\}
\)
is finite precisely on \([0,\infty)\), and \(J_a:=\widetilde{J}_a|_{[0,\infty)}\) is a good rate function on \([0,\infty)\).
\end{lemma}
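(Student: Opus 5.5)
Differentiation under the integral sign gives the two derivative formulas. Write \(D_\theta(\xi):=1+(e^\theta-1)r_a(\xi)=(1-r_a(\xi))+e^\theta r_a(\xi)\). Since \(0<r_a\le\rho_a<1\), on any compact \(\theta\)-interval \(K\) we have \(D_\theta(\xi)\ge m_K:=\min\{1,e^{\inf K}\}\cdot\text{const}>0\), and more precisely \(D_\theta\ge \min(1,e^\theta)\). The \(\theta\)-derivatives of \(\log D_\theta\) of every order are rational expressions in \(e^\theta r_a/D_\theta\); each such expression is bounded by \(C_K r_a(\xi)\). Because \(r_a\in L^1\), dominated convergence gives \(C^\infty\) smoothness. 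Alternatively, this already follows from Proposition~\ref{prop:lambda-holomorphic-extension}. The first two derivatives are then computed directly: \(\partial_\theta\log D_\theta=e^\theta r_a/D_\theta\), and
\[
\partial_\theta\frac{e^\theta r_a}{D_\theta}=\frac{e^\theta r_a D_\theta-e^{2\theta}r_a^2}{D_\theta^2}=\frac{e^\theta r_a(1-r_a)}{D_\theta^2}.
\]
Both integrands are strictly positive for every \(\xi>0\), because \(0<r_a<1\). Hence \(\Lambda_a'>0\) and \(\Lambda_a''>0\), which gives strict monotonicity and strict convexity.

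For the slope limits, first let \(\theta\to-\infty\). Then \(D_\theta\ge 1-r_a\ge 1-\rho_a\), so the integrand of \(\Lambda_a'\) is at most \(e^\theta r_a/(1-\rho_a)\). Therefore \(\Lambda_a'(\theta)\le e^\theta\|r_a\|_{L^1}/(2\pi(1-\rho_a))\to0\).

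Now let \(\theta\to+\infty\). The integrand equals \(e^\theta r_a/((1-r_a)+e^\theta r_a)\), which increases pointwise to \(1\) for each fixed \(\xi\). By monotone convergence (or Fatou), \(\Lambda_a'(\theta)\ge\frac1{2\pi}\int_0^R(\cdots)\to R/(2\pi)\) for every \(R\), so \(\Lambda_a'\to+\infty\). This step is the one place where the non-integrability of the constant \(1\) on \((0,\infty)\) matters, and it is the key point of steepness.

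For the Legendre transform, convexity and smoothness make \(\Lambda_a'\) a strictly increasing continuous bijection from \(\mathbb R\) onto \((0,\infty)\). For \(x>0\) the supremum is therefore attained at \(\theta=(\Lambda_a')^{-1}(x)\) and is finite.

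For \(x<0\), note that \(\Lambda_a(\theta)\to\Lambda_a(-\infty)\) as \(\theta\to-\infty\). This limit is finite: by monotone convergence,
\[
\Lambda_a(-\infty)=\frac1{2\pi}\int_0^\infty\log(1-r_a)\,\mathrm d\xi,
\]
which is finite because \(|\log(1-r_a)|\le C r_a\) with \(r_a\le\rho_a<1\). Consequently \(\theta x-\Lambda_a(\theta)\to+\infty\) as \(\theta\to-\infty\), and \(\widetilde J_a(x)=\infty\).

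For \(x=0\), monotonicity gives \(\sup_\theta(-\Lambda_a(\theta))=-\Lambda_a(-\infty)<\infty\).

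Finally, \(\widetilde J_a\) is convex and lower semicontinuous, being a supremum of affine functions. Its sublevel sets are bounded because \(\widetilde J_a(x)\ge \theta_0 x-\Lambda_a(\theta_0)\) for any fixed \(\theta_0>0\), so they are compact. Thus \(J_a\) is a good rate function on the closed set \([0,\infty)\). The only delicate points are the uniform lower bound on \(D_\theta\) and the divergence of \(\Lambda_a'\) at \(+\infty\), and both are handled as above.
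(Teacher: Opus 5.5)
Your proposal is correct and follows essentially the same route as the paper. Both arguments dominate all \(\theta\)-derivatives by \(C_K r_a\) to differentiate under the integral, use strict positivity of the two integrands, use \(D_\theta\ge 1-\rho_a\) and monotone convergence on \((0,R)\) for the slope limits, get finiteness of \(\Lambda_a(-\infty)\) from \(|\log(1-u)|\le Cu\), and obtain compact sublevel sets from \(\widetilde J_a(x)\ge\theta_0x-\Lambda_a(\theta_0)\). The only item you leave implicit is \(J_a\ge0\), which a rate function requires; it is immediate from \(\Lambda_a(0)=0\) by taking \(\theta=0\) in the supremum.
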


\begin{proof}
By Lemma~\ref{lem:basic-symbol-properties}, \(0<r_a<1\),
\(r_a\in L^1(0,\infty)\), and
\(\rho_a:=\sup_{\xi>0}r_a(\xi)<1\). Fix a compact \(K\subset\mathbb R\). For
\(\theta\in K\) and \(0\le u\le\rho_a\), the denominator
\(1+(e^\theta-1)u\) is bounded away from \(0\). If
\(F_n(\theta,u):=\partial_\theta^n\log(1+(e^\theta-1)u)\), then
\(F_n(\theta,0)=0\), and the mean-value theorem gives
\(|F_n(\theta,u)|\le C_{K,n}u\). Substituting \(u=r_a(\xi)\) justifies
differentiation under the integral sign to all orders. The displayed formulas
for \(\Lambda_a'\) and \(\Lambda_a''\) follow, and their integrands are strictly
positive.

For the slope limits, when \(\theta\le0\),
\[
        0\le
        \frac{e^\theta r_a(\xi)}
        {1+(e^\theta-1)r_a(\xi)}
        \le
        \frac{r_a(\xi)}{1-\rho_a},
\]
so dominated convergence gives the left limit in
\eqref{eq:lambda-slope-limits}. As \(\theta\to+\infty\), the same integrand
increases pointwise to \(1\); monotone convergence on \((0,R)\), followed by
\(R\to\infty\), gives the right limit.

Since \(|\log(1-u)|\le C_{\rho_a}u\) for \(0\le u\le\rho_a\), the limit
\(\Lambda_a(-\infty):=\lim_{\theta\to-\infty}\Lambda_a(\theta)\) is finite.
If \(x<0\), then \(\theta x-\Lambda_a(\theta)\to+\infty\) as
\(\theta\to-\infty\). If \(x=0\), then
\(\widetilde J_a(0)=-\Lambda_a(-\infty)<\infty\). If \(x>0\), the slope limits
and strict convexity give a unique maximizer \(\theta_a(x)\) with
\(\Lambda_a'(\theta_a(x))=x\).

Finally, \(\widetilde J_a\) is lower semicontinuous as a Legendre--Fenchel
transform, and \(J_a\ge0\) on \([0,\infty)\) because \(\Lambda_a(0)=0\). For
any \(\theta_0>0\), \(J_a(x)\ge \theta_0x-\Lambda_a(\theta_0)\), so the
sublevel sets are closed and bounded in \([0,\infty)\).
\end{proof}

\subsection{Legendre duality}

We use the following standard convex-duality fact.

\begin{lemma}
\label{lem:legendre-duality}
Let \(\Lambda:\mathbb R\to\mathbb R\) be finite, \(C^1\), and strictly convex,
and define
\[
        J(x):=\sup_{\theta\in\mathbb R}\{\theta x-\Lambda(\theta)\},
        \qquad x\in\mathbb R.
\]
Set \(m_-:=\lim_{\theta\to-\infty}\Lambda'(\theta)\) and
\(m_+:=\lim_{\theta\to+\infty}\Lambda'(\theta)\), allowing infinite limits.
Then:
\begin{enumerate}[label=(\roman*)]
\item For every \(x\in(m_-,m_+)\), the supremum defining \(J(x)\) is attained
at the unique \(\theta(x)\in\mathbb R\) satisfying
\(\Lambda'(\theta(x))=x\), and
\[
J(x)=\theta(x)x-\Lambda(\theta(x)).
\]

\item The function \(J\) is differentiable on \((m_-,m_+)\), with
\(J'(x)=\theta(x)\). In particular, \(J\) is strictly convex on
\((m_-,m_+)\).

\item For every \(\theta\in\mathbb R\),
\(J(\Lambda'(\theta))=\theta\Lambda'(\theta)-\Lambda(\theta)\).

\item If \(\Lambda(0)=0\) and \(\Lambda'(0)\in(m_-,m_+)\), then
\(J(x)\ge0\) for every \(x\in\mathbb R\), and \(J\) vanishes uniquely at
\(x=\Lambda'(0)\).
\end{enumerate}
\end{lemma}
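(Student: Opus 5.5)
Since \(\Lambda\) is \(C^1\) and strictly convex, \(\Lambda'\) is continuous and strictly increasing on \(\mathbb R\). It is therefore a homeomorphism of \(\mathbb R\) onto the open interval \((m_-,m_+)\), and we write \(\theta(x):=(\Lambda')^{-1}(x)\) for \(x\in(m_-,m_+)\). For (i), fix such an \(x\) and set \(\psi_x(\theta):=\theta x-\Lambda(\theta)\). This function is strictly concave and \(C^1\), and \(\psi_x'(\theta)=x-\Lambda'(\theta)\) is strictly decreasing, positive for \(\theta<\theta(x)\) and negative for \(\theta>\theta(x)\). Hence \(\theta(x)\) is the unique global maximizer, and \(J(x)=\theta(x)x-\Lambda(\theta(x))\). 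Part (iii) is the same first-order argument with \(x=\Lambda'(\theta)\): the concave function \(\psi_{\Lambda'(\theta)}\) has a critical point at \(\theta\), so its supremum is attained there. This holds for every \(\theta\in\mathbb R\), because \(\Lambda'(\theta)\) always lies in the open range \((m_-,m_+)\).

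For (ii) I would avoid assuming that \(\theta(\cdot)\) is differentiable and use a two-sided squeeze. For \(x,y\in(m_-,m_+)\), the definition of \(J\) as a supremum gives
\[
J(y)\ge \theta(x)y-\Lambda(\theta(x))=J(x)+\theta(x)(y-x),
\]
and symmetrically \(J(x)\ge J(y)+\theta(y)(x-y)\). Together these give
\[
\theta(x)(y-x)\le J(y)-J(x)\le \theta(y)(y-x).
\]
Now divide by \(y-x\), treating the signs of \(y-x\) separately. Since \(\theta(\cdot)\) is continuous, being the inverse of a continuous strictly increasing map, letting \(y\to x\) gives \(J'(x)=\theta(x)\). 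Because \(\theta(\cdot)\) is strictly increasing, \(J'\) is strictly increasing on \((m_-,m_+)\), so \(J\) is strictly convex there.

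For (iv), taking \(\theta=0\) in the supremum gives \(J(x)\ge -\Lambda(0)=0\) for all \(x\in\mathbb R\). At \(x_0:=\Lambda'(0)\) we have \(\theta(x_0)=0\), so \(J(x_0)=0\) by (i). It remains to show that \(J(x)>0\) whenever \(x\ne x_0\).
\begin{itemize}
\item If \(x\in(m_-,m_+)\setminus\{x_0\}\), then \(\theta(x)\neq 0\). Since \(\theta(x)\) is the unique maximizer of \(\psi_x\), we get \(J(x)=\psi_x(\theta(x))>\psi_x(0)=0\).
\item If \(x\notin(m_-,m_+)\), say \(x\ge m_+\) (the case \(x\le m_-\) is symmetric), then \(\psi_x'(0)=x-\Lambda'(0)>0\) because \(\Lambda'(0)<m_+\le x\). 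Hence \(\psi_x(\varepsilon)>\psi_x(0)=0\) for some small \(\varepsilon>0\), and so \(J(x)>0\).
\end{itemize}

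The only delicate points are the differentiability claim in (ii), which the squeeze argument handles, and the values of \(x\) outside \((m_-,m_+)\) in (iv), where no maximizer need exist but the derivative at \(\theta=0\) still gives strict positivity. Everything else is the elementary first-order analysis of a strictly concave \(C^1\) function.
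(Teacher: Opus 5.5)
Your proof is correct and follows essentially the same route as the paper: first-order analysis of the strictly concave function \(\theta\mapsto\theta x-\Lambda(\theta)\) gives (i) and (iii). The two-sided squeeze \(\theta(x)\le (J(y)-J(x))/(y-x)\le\theta(y)\), together with continuity of \((\Lambda')^{-1}\), gives (ii), and the sign of the derivative at \(\theta=0\) gives (iv). The only difference is cosmetic: the paper applies the derivative-at-zero argument uniformly to every \(x\ne\Lambda'(0)\), so your case split in (iv), using the unique maximizer inside \((m_-,m_+)\), is valid but unnecessary.
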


\begin{proof}
For fixed \(x\), set \(\Phi_x(\theta):=\theta x-\Lambda(\theta)\). Since
\(\Lambda\) is \(C^1\) and strictly convex, \(\Lambda'\) is continuous and
strictly increasing, the limits \(m_\pm\) exist in the extended sense, and
\(\Phi_x'(\theta)=x-\Lambda'(\theta)\).

If \(x\in(m_-,m_+)\), there is a unique \(\theta(x)\) with
\(\Lambda'(\theta(x))=x\). Then \(\Phi_x'\) is positive to the left of
\(\theta(x)\) and negative to the right, so \(\theta(x)\) is the unique
maximizer. The identity in (iii) follows by taking \(x=\Lambda'(\theta)\).

For differentiability, the maximizing property gives, for
\(x,y\in(m_-,m_+)\) with \(y>x\),
\[
        \theta(x)
        \le
        \frac{J(y)-J(x)}{y-x}
        \le
        \theta(y).
\]
Applying this with \(z<x<y\) and letting \(z\uparrow x\), \(y\downarrow x\),
using the continuity of \(\theta=(\Lambda')^{-1}\), yields \(J'(x)=\theta(x)\).
Since \(\theta\) is strictly increasing, \(J\) is strictly convex on
\((m_-,m_+)\).

Finally, if \(\Lambda(0)=0\), then \(J(x)\ge0\) by taking \(\theta=0\) in the
supremum, while (iii) with \(\theta=0\) gives \(J(\Lambda'(0))=0\). If
\(x\ne\Lambda'(0)\), then \(\theta x-\Lambda(\theta)\) has value \(0\) and
nonzero derivative at \(\theta=0\), hence is positive for \(\theta\) of a
suitable sign and sufficiently small magnitude.
\end{proof}

\begin{corollary}
\label{cor:rate-function-parametrization}
Let \(a>1\). For every \(x>0\), there exists a unique
\(\theta_a(x)\in\mathbb R\) such that
\[
        \Lambda_a'(\theta_a(x))=x,
        \qquad
        J_a(x)=\theta_a(x)x-\Lambda_a(\theta_a(x)).
\]
Moreover, \(J_a\) is differentiable and strictly convex on \((0,\infty)\), with
\(J_a'(x)=\theta_a(x)\) and
\(J_a''(x)=1/\Lambda_a''(\theta_a(x))\). Finally, \(J_a\ge0\) on
\([0,\infty)\), and it vanishes uniquely at
\(\lambda_a=\frac{1}{4\pi}(1-\frac{1}{a})\).
\end{corollary}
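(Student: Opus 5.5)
The plan is to apply Lemma~\ref{lem:legendre-duality} to \(\Lambda=\Lambda_a\), with the hypotheses supplied by Lemma~\ref{lem:convexity-steepness}.

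\textbf{Hypotheses.} By Lemma~\ref{lem:convexity-steepness}, \(\Lambda_a\) is finite, \(C^\infty\) and strictly convex on \(\mathbb R\). Its slope limits are \(m_-=0\) and \(m_+=+\infty\). Since \(J_a\) is the restriction of \(\widetilde J_a\) to \([0,\infty)\), parts (i) and (ii) of Lemma~\ref{lem:legendre-duality} apply for every \(x\in(0,\infty)\). They give the unique \(\theta_a(x)\) with \(\Lambda_a'(\theta_a(x))=x\), the representation \(J_a(x)=\theta_a(x)x-\Lambda_a(\theta_a(x))\), differentiability of \(J_a\) on \((0,\infty)\), the formula \(J_a'=\theta_a\), and strict convexity.

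\textbf{Second derivative.} Lemma~\ref{lem:convexity-steepness} gives \(\Lambda_a''>0\) everywhere, since the integrand in its formula is strictly positive. Hence \(\Lambda_a'\colon\mathbb R\to(0,\infty)\) is a \(C^\infty\) bijection with nonvanishing derivative. By the inverse function theorem, \(\theta_a=(\Lambda_a')^{-1}\) is \(C^1\) with \(\theta_a'(x)=1/\Lambda_a''(\theta_a(x))\). Differentiating \(J_a'=\theta_a\) then yields \(J_a''(x)=1/\Lambda_a''(\theta_a(x))\). This is the only point that needs more than Lemma~\ref{lem:legendre-duality}, namely strict positivity of \(\Lambda_a''\) rather than mere strict convexity. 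It is not a real obstacle.

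\textbf{Nonnegativity and zeros.} We have \(\Lambda_a(0)=\log\mathbb E e^{0}\)-limit \(=0\), directly from the integral formula. Proposition~\ref{prop:lambda-holomorphic-extension} gives \(\Lambda_a'(0)=\lambda_a=\frac1{4\pi}(1-\frac1a)\in(0,\infty)=(m_-,m_+)\). Part (iv) of Lemma~\ref{lem:legendre-duality} then shows \(J_a\ge0\) and that \(\lambda_a\) is the unique zero among \(x\in\mathbb R\), in particular on \([0,\infty)\).

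The endpoint \(x=0\) is covered by (iv) as well, and needs no separate argument. Explicitly, \(J_a(0)=-\Lambda_a(-\infty)>0\), since \(\Lambda_a\) is strictly increasing with \(\Lambda_a(0)=0\).
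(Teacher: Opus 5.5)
Your proposal is correct and follows the paper's proof essentially verbatim. You invoke Lemma~\ref{lem:legendre-duality} with \(m_-=0\), \(m_+=+\infty\) supplied by Lemma~\ref{lem:convexity-steepness}, obtain \(J_a''=1/\Lambda_a''(\theta_a)\) from \(\Lambda_a''>0\) and the inverse function theorem, and use \(\Lambda_a'(0)=\lambda_a\) in part (iv) for nonnegativity and the unique zero; your extra observation that \(J_a(0)=-\Lambda_a(-\infty)>0\) is correct and harmless.
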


\begin{proof}
By Lemma~\ref{lem:convexity-steepness}, Lemma~\ref{lem:legendre-duality}
applies to \(\Lambda_a\) with \(m_-=0\) and \(m_+=+\infty\). Since
\(J_a=\widetilde J_a|_{[0,\infty)}\), this gives the parametrization,
differentiability, strict convexity, nonnegativity, and uniqueness of the zero.
As \(\Lambda_a''>0\), the inverse function theorem gives
\(\theta_a\in C^1(0,\infty)\), and differentiating
\(\Lambda_a'(\theta_a(x))=x\) yields
\(\theta_a'(x)=1/\Lambda_a''(\theta_a(x))\). Finally,
\(\Lambda_a'(0)=\lambda_a\) by \eqref{eq:lambda-a-derivatives-at-zero}.
\end{proof}

\subsection{Proof of the reference large deviations principle}

\begin{proof}[Proof of Theorem~\ref{thm:ldp-reference}]
Set \(Y_L:=N_a(L)/L\). Its normalized logarithmic moment generating function is
\(\Lambda_{a,L}\), and Theorem~\ref{thm:szego-limit-reference} gives
\(\Lambda_{a,L}(\theta)\to\Lambda_a(\theta)\) for every \(\theta\in\mathbb R\).
Since \(\Lambda_a\) is finite and differentiable on all of \(\mathbb R\), the
boundary-steepness condition in the G\"artner--Ellis theorem is vacuous.

By the G\"artner--Ellis theorem \cite[Theorem~2.3.6]{DemboZeitouni2010},
\((Y_L)_{L>0}\), viewed as \(\mathbb R\)-valued random variables, satisfies an
LDP with speed \(L\) and rate function
\[
        \widetilde J_a(x)
        :=
        \sup_{\theta\in\mathbb R}\{\theta x-\Lambda_a(\theta)\},
        \qquad x\in\mathbb R .
\]
Lemma~\ref{lem:convexity-steepness} shows that \(\widetilde J_a\) is good and
\(\widetilde J_a(x)=+\infty\) for \(x<0\). Since \(Y_L\in[0,\infty)\) almost
surely, the LDP restricts to \([0,\infty)\), with the relative topology, speed
\(L\), and rate function \(J_a=\widetilde J_a|_{[0,\infty)}\). The remaining
properties of \(J_a\) follow from Corollary~\ref{cor:rate-function-parametrization}.
\end{proof}

\begin{remark}
\label{rem:local-LDP-route}
The Brillinger-mixing estimates from \cite{AiGuoZhou2026} suggest a local
cumulant route: with \(t=e^\theta-1\), the factorial-cumulant expansion
identifies the scaled log-mgf only for small \(|t|\), hence only near
\(\theta=0\). This controls slopes only around the mean and gives at most a
local G\"artner--Ellis conclusion. The Fredholm reduction and trace asymptotics
identify \(\Lambda_a(\theta)\) for all real \(\theta\), yielding the full LDP
for \(N_a(L)/L\) on \([0,\infty)\).
\end{remark}

\section{Strong Szeg\H{o} asymptotics}
\label{sec:strong-szego}

This section proves the strong Szeg\H{o} expansion in Theorem~\ref{thm:strong-szego-reference}. The leading term came from fixed-power trace limits for \(T_{a,L}\); the \(O(1)\) correction requires a finer comparison with a canonical Wiener--Hopf model.

The key observation is that the kernel
\[
T_{a,L}(\xi,\eta)
=
\frac{\sqrt{\xi\eta}}{\pi}X_L(\xi-\eta)H_a(\xi+\eta)
\]
is close to, but not exactly, a Wiener--Hopf kernel. The obstruction is the factor \(H_a(\xi+\eta)\). We replace it by the separable diagonal approximation \(\sqrt{H_a(2\xi)H_a(2\eta)}\). These two factors have second-order contact along \(\xi=\eta\), which makes the discrepancy uniformly trace class.

Throughout this section \(a>1\) is fixed. We use the same \(H_a\), \(r_a\), and \(X_L\) as above, and regard \(r_a\) as a function on \(\mathbb{R}\) by setting \(r_a(\xi)=0\) for \(\xi\leq0\). We write \(\mathcal S_1\) for the trace-class ideal.

\subsection{The canonical Wiener--Hopf model}

We use the Fourier-transform convention
\[
\widehat{f}(\xi):=\int_{\mathbb{R}}e^{-ix\xi}f(x)\,\mathrm{d}x,
\qquad
\check{g}(x):=\frac{1}{2\pi}\int_{\mathbb{R}}e^{ix\xi}g(\xi)\,\mathrm{d}\xi.
\]

\begin{definition}
\label{def:wiener-hopf-operator}
For \(\sigma\in L^\infty(\mathbb R)\), define the truncated Wiener--Hopf operator \(W_L(\sigma)\) on \(L^2(0,L)\) by
\[
(W_L(\sigma)f)(x')
=
\frac{1}{2\pi}
\int_0^L
\int_{\mathbb R}\sigma(\xi)e^{i\xi(x'-x)}f(x)
\,\mathrm d\xi\,\mathrm dx,
\qquad 0<x'<L,
\]
in the Fourier-multiplier sense. Equivalently, if \(f\) is extended by zero to \(\mathbb R\), then \(W_L(\sigma)f\) is the restriction to \((0,L)\) of the inverse Fourier transform of \(\sigma\widehat f\).
\end{definition}

\begin{definition}
\label{def:canonical-comparison}
The canonical comparison operator \(\widetilde W_{a,L}\) is the integral operator on \(L^2(0,\infty)\) with kernel
\[
\widetilde{W}_{a,L}(\xi,\eta)
:=
\frac{\sqrt{\xi\eta}}{\pi}
X_L(\xi-\eta)
\sqrt{H_a(2\xi)H_a(2\eta)},
\qquad \xi,\eta>0.
\]
\end{definition}

\begin{lemma}
\label{lem:wh-equivalence}
The operators \(W_L(r_a)\) and \(\widetilde{W}_{a,L}\) admit a common
Hilbert--Schmidt factorization: there exists
\(V_{a,L}:L^2(0,L)\to L^2(0,\infty)\) such that
\[
        W_L(r_a)=V_{a,L}^{*}V_{a,L},
        \qquad
        \widetilde{W}_{a,L}=V_{a,L}V_{a,L}^{*}.
\]
Consequently, they are positive trace-class operators with the same nonzero
eigenvalues, counted with multiplicity, and for every \(t\in\mathbb C\),
\[
        \det(\operatorname{Id}+t\widetilde{W}_{a,L})
        =
        \det(\operatorname{Id}+tW_L(r_a)).
\]
Moreover, with \(\rho_a:=\sup_{\xi>0}r_a(\xi)\in(0,1)\),
\[
        0\leq W_L(r_a)\leq \rho_a\operatorname{Id},
        \qquad
        0\leq \widetilde{W}_{a,L}\leq \rho_a\operatorname{Id}.
\]
\end{lemma}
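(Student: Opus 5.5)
I will construct \(V_{a,L}\) explicitly by mimicking the Fourier--Laplace factorization of Lemma~\ref{lem:reduced-operator}, with the height integral replaced by the weight \(\sqrt{H_a(2\xi)}\). For \(f\in L^2(0,L)\) and \(\xi>0\), set
\[
        (V_{a,L}f)(\xi)
        :=
        \sqrt{\frac{\xi H_a(2\xi)}{\pi}}
        \int_0^L e^{-i\xi x}f(x)\,\mathrm dx
        =
        \sqrt{\frac{r_a(\xi)}{2\pi}}\,\widehat{f\mathbf 1_{(0,L)}}(\xi).
\]
This is an integral operator with kernel \(v(\xi,x)=\sqrt{r_a(\xi)/(2\pi)}\,e^{-i\xi x}\) on \((0,\infty)\times(0,L)\). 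Its Hilbert--Schmidt norm is computed by Tonelli:
\[
        \|V_{a,L}\|_{\mathrm{HS}}^2
        =
        \frac{L}{2\pi}\int_0^\infty r_a(\xi)\,\mathrm d\xi
        =
        L\lambda_a<\infty,
\]
using \eqref{eq:basic-symbol-integral}.

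Next I compute the two products. The kernel of \(V_{a,L}V_{a,L}^*\) is
\[
        \int_0^L v(\xi,x)\overline{v(\eta,x)}\,\mathrm dx
        =
        \frac{\sqrt{r_a(\xi)r_a(\eta)}}{2\pi}X_L(\xi-\eta).
\]
Since \(r_a(\xi)=2\xi H_a(2\xi)\), this equals \(\frac{\sqrt{\xi\eta}}{\pi}X_L(\xi-\eta)\sqrt{H_a(2\xi)H_a(2\eta)}\), which is \(\widetilde W_{a,L}\). For \(V_{a,L}^*V_{a,L}\), I use \(\langle V f,Vg\rangle=\frac{1}{2\pi}\int_0^\infty r_a|\cdot|\,\overline{\hat f}\hat g\,\mathrm d\xi\) with \(f,g\) extended by zero. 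Because \(r_a\) vanishes on \((-\infty,0]\) by the convention of this section, this equals \(\frac1{2\pi}\int_{\mathbb R}r_a\overline{\hat f}\hat g\). By Plancherel (with the stated \(\check{}\) normalization), that is \(\langle f,(r_a\hat g)^\vee\rangle_{L^2(\mathbb R)}=\langle f,W_L(r_a)g\rangle_{L^2(0,L)}\), since \(f\) is supported in \((0,L)\). Hence \(W_L(r_a)=V^*V\).

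The consequences are then standard. Both products are positive and trace class as products of Hilbert--Schmidt operators. The nonzero spectra of \(V^*V\) and \(VV^*\), with multiplicity, coincide, so the determinant identity follows from Sylvester's identity \(\det(\operatorname{Id}+tV^*V)=\det(\operatorname{Id}+tVV^*)\).

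For the operator bounds, it suffices to treat \(W_L(r_a)\), since \(\|VV^*\|=\|V^*V\|\) and both are positive. The Plancherel formula above gives
\[
        0\le\langle f,W_L(r_a)f\rangle
        =
        \frac1{2\pi}\int r_a|\hat f|^2
        \le\rho_a\frac1{2\pi}\int|\hat f|^2
        =\rho_a\|f\|^2.
\]
This yields \(\|V\|^2\le\rho_a\), and so \(0\le \widetilde W_{a,L}\le\rho_a\operatorname{Id}\) as well.

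The main point needing care is the identification \(V^*V=W_L(r_a)\). One has to check that the half-line support of \(r_a\) exactly accounts for \(V\) being defined only for \(\xi>0\), and that the Fourier constants match \(1/(2\pi)\) in Definition~\ref{def:wiener-hopf-operator}. Everything else is routine.
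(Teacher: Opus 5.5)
Your proposal is correct and follows the paper's proof essentially verbatim: the same operator \((V_{a,L}f)(\xi)=\sqrt{r_a(\xi)/(2\pi)}\,\widehat{f\mathbf 1_{(0,L)}}(\xi)\) for \(\xi>0\), the same Hilbert--Schmidt computation, the identification of \(V_{a,L}^*V_{a,L}\) with \(W_L(r_a)\) via Plancherel and the zero extension of \(r_a\), Sylvester's identity, and the bound \(\|\widetilde W_{a,L}\|=\|W_L(r_a)\|\le\rho_a\). The only point you leave implicit is that \(\rho_a\in(0,1)\), which the paper takes from Lemma~\ref{lem:basic-symbol-properties}.
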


\begin{proof}
Define
\[
        (V_{a,L}f)(\xi)
        :=
        \sqrt{\frac{r_a(\xi)}{2\pi}}
        \int_0^L e^{-i\xi x}f(x)\,\mathrm dx,
        \qquad \xi>0.
\]
Since
\(\|V_{a,L}\|_{\mathrm{HS}}^2=(L/2\pi)\int_0^\infty r_a(\xi)\,\mathrm d\xi<\infty\),
the operator \(V_{a,L}\) is Hilbert--Schmidt. Direct computation, using the
zero extension of \(r_a\) and \eqref{eq:normalized-symbol}, gives
\(V_{a,L}^{*}V_{a,L}=W_L(r_a)\) and
\[
        (V_{a,L}V_{a,L}^{*})(\xi,\eta)
        =
        \frac{\sqrt{\xi\eta}}{\pi}
        X_L(\xi-\eta)\sqrt{H_a(2\xi)H_a(2\eta)},
\]
hence \(V_{a,L}V_{a,L}^{*}=\widetilde W_{a,L}\). The spectral equivalence of
\(V_{a,L}^{*}V_{a,L}\) and \(V_{a,L}V_{a,L}^{*}\) gives the trace-class and
eigenvalue statements, and Sylvester's identity gives the determinant identity.

For \(f\in L^2(0,L)\), extended by zero to \(\mathbb R\), Plancherel's theorem
gives
\[
        0
        \le
        \langle W_L(r_a)f,f\rangle
        =
        \frac{1}{2\pi}\int_0^\infty r_a(\xi)|\widehat f(\xi)|^2\,\mathrm d\xi
        \le
        \rho_a\|f\|_{L^2(0,L)}^2.
\]
Thus \(0\le W_L(r_a)\le \rho_a\operatorname{Id}\). Since
\(\|\widetilde W_{a,L}\|=\|V_{a,L}V_{a,L}^{*}\|
=\|V_{a,L}^{*}V_{a,L}\|=\|W_L(r_a)\|\), the same bound holds for
\(\widetilde W_{a,L}\). Finally, \(\rho_a\in(0,1)\) follows from
Lemma~\ref{lem:basic-symbol-properties}.
\end{proof}

\subsection{A trace-class criterion}

We next record a trace-class criterion tailored to kernels that vanish at
infinity and have enough \(L^2\)-control after differentiating in one variable.

\begin{lemma}
\label{lem:trace-class-criterion}
Let \(k:(0,\infty)^2\to\mathbb C\) be measurable. Assume that, for almost every
\(\eta>0\), the map \(\xi\mapsto k(\xi,\eta)\) is locally absolutely continuous
on \((0,\infty)\), satisfies \(\lim_{\xi\to\infty}k(\xi,\eta)=0\), and
\[
        A:=
        \int_0^\infty
        \sqrt u
        \left(
        \int_0^\infty |\partial_1 k(u,\eta)|^2\,\mathrm d\eta
        \right)^{1/2}
        \,\mathrm du
        <\infty .
\]
Then the integral operator \(K\) on \(L^2(0,\infty)\) with kernel \(k\) is trace
class, and \(\|K\|_1\le A\).
\end{lemma}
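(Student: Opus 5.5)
Since $k(\cdot,\eta)$ vanishes at infinity and is locally absolutely continuous, I would write it as an integral of its derivative and read that identity as a factorization of $K$ into two Hilbert--Schmidt pieces. For almost every $\eta$ the fundamental theorem of calculus gives
\[
        k(\xi,\eta)=-\int_\xi^\infty \partial_1k(u,\eta)\,\mathrm du
        =-\int_0^\infty \mathbf 1_{\{u>\xi\}}\,\partial_1k(u,\eta)\,\mathrm du .
\]
The absolute convergence of this integral follows from the hypothesis $A<\infty$, after a Cauchy--Schwarz/Fubini check that $\int_0^\infty|\partial_1 k(u,\eta)|\,\mathrm du<\infty$ for a.e.\ $\eta$. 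This represents $K$ as the composition $K=-PD$ of two operators. Here $D$ maps $L^2(0,\infty)_\eta$ into functions of $u$ by $(Df)(u)=\int_0^\infty\partial_1k(u,\eta)f(\eta)\,\mathrm d\eta$, and $P$ is the integral operator with kernel $\mathbf 1_{\{u>\xi\}}$.

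A plain Hilbert--Schmidt factorization will not give the bound $A$, because $P$ is not Hilbert--Schmidt on $L^2(0,\infty)$. So instead I would decompose rank-one-wise in the variable $u$. For each $u>0$, let $e_u:=\mathbf 1_{(0,u)}\in L^2(0,\infty)_\xi$, so $\|e_u\|_2=\sqrt u$. Let $g_u:=\overline{\partial_1k(u,\cdot)}\in L^2(0,\infty)_\eta$, with $\|g_u\|_2=\bigl(\int_0^\infty|\partial_1k(u,\eta)|^2\,\mathrm d\eta\bigr)^{1/2}$. Then, at least formally,
\[
        K=-\int_0^\infty e_u\otimes \overline{g_u}\;\mathrm du,
        \qquad\text{i.e.}\qquad
        (Kf)(\xi)=-\int_0^\infty e_u(\xi)\langle g_u,f\rangle\,\mathrm du .
\]
Each rank-one operator $e_u\langle g_u,\cdot\rangle$ has trace norm $\|e_u\|_2\|g_u\|_2$, which equals the integrand in $A$. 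So as a Bochner integral in $\mathcal S_1$, the triangle inequality gives $\|K\|_1\le\int_0^\infty\sqrt u\,\|g_u\|_2\,\mathrm du=A$.

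The main step needing care is making this Bochner-integral identification rigorous. I would handle it as follows.

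First, check that $u\mapsto e_u\otimes\overline{g_u}$ is strongly measurable into $\mathcal S_1$. The map $u\mapsto e_u$ is continuous into $L^2$. The map $u\mapsto g_u$ is measurable into $L^2$ by Fubini and Pettis, since $L^2$ is separable. Then $\int_0^\infty\|e_u\otimes \overline{g_u}\|_1\,\mathrm du=A<\infty$, so the integral defines an element $\widetilde K\in\mathcal S_1$ with $\|\widetilde K\|_1\le A$.

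Second, identify $\widetilde K$ with $K$ by testing against $f,h$ in a dense class, say bounded functions with compact support in $(0,\infty)$. For these,
\[
\langle h,\widetilde Kf\rangle=-\int_0^\infty\!\!\int\!\!\int \overline{h(\xi)}\mathbf 1_{\{\xi<u\}}\,\partial_1k(u,\eta)f(\eta)\,\mathrm d\eta\,\mathrm d\xi\,\mathrm du .
\]
Absolute integrability of the triple integral is bounded by $\|h\|_\infty$ times a constant times $A$, using Cauchy--Schwarz in $\eta$ and $\xi$. Fubini then allows integrating $u$ first, and the representation of $k$ above turns the result into $\langle h,Kf\rangle$. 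Hence $K$, defined on this dense set, coincides with the bounded operator $\widetilde K$. Therefore $K$ is trace class with $\|K\|_1\le A$.
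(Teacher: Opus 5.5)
Your proposal is correct and follows the paper's proof essentially step for step: the representation $k(\xi,\eta)=-\int_0^\infty\mathbf 1_{(0,u)}(\xi)\,\partial_1k(u,\eta)\,\mathrm du$, the rank-one operators $\mathbf 1_{(0,u)}\langle g_u,\cdot\rangle$ with trace norm $\sqrt u\,\|g_u\|_2$, and a Bochner integral in $\mathcal S_1$. The only procedural difference is the final identification: the paper identifies the kernel through convergence of the same integral in Hilbert--Schmidt norm, whereas you test against a dense class, and the two are equivalent.

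One claim needs to be weakened. $A<\infty$ does not give $\int_0^\infty|\partial_1k(u,\eta)|\,\mathrm du<\infty$, because the weight $\sqrt u$ degenerates at $u=0$. For example, take $\partial_1k(u,\eta)=u^{-1}\phi(\eta)\mathbf 1_{(0,1)}(u)$ with $0\ne\phi\in L^2$. What does hold is $\int_\delta^\infty\|g_u\|_2\,\mathrm du\le\delta^{-1/2}A$, which yields $\int_\delta^\infty|\partial_1k(u,\eta)|\,\mathrm du<\infty$ for a.e.\ $\eta$ and every rational $\delta>0$. That is exactly what the representation at each $\xi>0$ requires, so the rest of your argument goes through unchanged.
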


\begin{proof}
For each \(\delta>0\), \(A<\infty\) implies
\[
        \int_\delta^\infty
        \left(
        \int_0^\infty |\partial_1 k(u,\eta)|^2\,\mathrm d\eta
        \right)^{1/2}
        \,\mathrm du<\infty .
\]
By Minkowski's integral inequality, after a countable intersection over
rational \(\delta>0\), we may assume that
\[
\int_\delta^\infty |\partial_1 k(u,\eta)|\,\mathrm du<\infty
\]
for every such
\(\delta\) and almost every \(\eta>0\). Hence local absolute continuity and the
decay of \(k(\cdot,\eta)\) at infinity give, for almost every \((\xi,\eta)\),
\[
        k(\xi,\eta)
        =
        \int_0^\infty
        \mathbf 1_{(0,u)}(\xi)\bigl(-\partial_1 k(u,\eta)\bigr)
        \,\mathrm du .
\]

Choose a measurable representative of \(\partial_1 k\). For \(u>0\), set
\(h_u:=\mathbf 1_{(0,u)}\) and
\(g_u(\eta):=-\overline{\partial_1 k(u,\eta)}\), with \(g_u:=0\) when this
function is not in \(L^2(0,\infty)\), and define
\((K_u f)(\xi):=\langle g_u,f\rangle h_u(\xi)\). Then \(K_u\) has kernel
\(\mathbf 1_{(0,u)}(\xi)(-\partial_1 k(u,\eta))\), and
\[
        \|K_u\|_1
        =
        \|h_u\|_2\|g_u\|_2
        =
        \sqrt u
        \left(
        \int_0^\infty |\partial_1 k(u,\eta)|^2\,\mathrm d\eta
        \right)^{1/2}.
\]
Thus \(\int_0^\infty\|K_u\|_1\,\mathrm du=A\). The map \(u\mapsto K_u\) is
strongly measurable, so \(\widetilde K:=\int_0^\infty K_u\,\mathrm du\) exists
as a Bochner integral in trace norm and satisfies \(\|\widetilde K\|_1\le A\).
Since the same integral also converges in Hilbert--Schmidt norm, its kernel is
identified with \(k\) by the representation above.
\end{proof}

\subsection{Uniform trace-class discrepancy}

We now show that the difference between the reduced operator and the canonical
comparison operator is uniformly trace class.

\begin{proposition}
\label{prop:trace-class-discrepancy}
\[
        \sup_{L\ge1}\|T_{a,L}-\widetilde W_{a,L}\|_1<\infty .
\]
\end{proposition}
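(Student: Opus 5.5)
The plan is to isolate, in the discrepancy kernel, the factor that carries all the \(L\)-dependence, and to remove that dependence with a conjugation by unitaries. By \eqref{eq:reduced-kernel} and Definition~\ref{def:canonical-comparison}, \(T_{a,L}-\widetilde W_{a,L}\) has kernel
\[
D_L(\xi,\eta)=\frac{\sqrt{\xi\eta}}{\pi}X_L(\xi-\eta)B_a(\xi,\eta),
\qquad
B_a(\xi,\eta):=H_a(\xi+\eta)-\sqrt{H_a(2\xi)H_a(2\eta)} .
\]
Since \(X_L(\delta)=(1-e^{-iL\delta})/(i\delta)\), we can write \(D_L=k-M_Lk M_L^{*}\). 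Here \(M_L\) is multiplication by \(e^{-iL\xi}\), which is unitary on \(L^2(0,\infty)\), and \(k\) is the \(L\)-independent kernel
\[
k(\xi,\eta):=\frac{\sqrt{\xi\eta}}{i\pi}\,\frac{B_a(\xi,\eta)}{\xi-\eta}.
\]
Trace norms are unitarily invariant, so \(\|D_L\|_1\le 2\|k\|_1\) for every \(L\). The proposition therefore reduces to the single statement that \(k\) defines a trace-class operator, which I will verify with Lemma~\ref{lem:trace-class-criterion}.

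\textbf{Step 1: regularity of \(B_a/(\xi-\eta)\).} The function \(H_a(s)=\int_1^a e^{-sy}\,\mathrm dy\) is entire and strictly positive on \([0,\infty)\), with \(H_a(0)=a-1\). Hence \(\ell:=\log H_a\) is smooth on \([0,\infty)\), and all of its derivatives are bounded there: indeed \(\ell'(s)=-\mathbb E_s[Y]\) and \(\ell''(s)=\operatorname{Var}_s(Y)\), for \(Y\) with density proportional to \(e^{-sy}\) on \([1,a]\).

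Write \(\xi=m+\delta/2\) and \(\eta=m-\delta/2\). Then
\[
B_a=H_a(2m)\bigl(1-e^{\psi(m,\delta)}\bigr),
\qquad
\psi(m,\delta):=\tfrac12\bigl(\ell(2m+\delta)+\ell(2m-\delta)\bigr)-\ell(2m).
\]
The function \(\psi\) is even in \(\delta\) and vanishes at \(\delta=0\), so Taylor's formula with integral remainder gives \(|\psi|\le C\delta^2\), with \(C\) controlled by \(\sup|\ell''|\). It also gives corresponding bounds \(|\partial\psi|\le C|\delta|\) for the first derivatives in \(\xi\).

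Combining this with \(H_a(s)\le e^{-s}(a-1)\) and the bound \(|1-e^{\psi}|\le|\psi|e^{|\psi|}\), I expect
\[
\Bigl|\frac{B_a}{\xi-\eta}\Bigr|+\Bigl|\partial_\xi\frac{B_a}{\xi-\eta}\Bigr|
\le C(1+\xi+\eta)^{N}e^{-(\xi+\eta)} .
\]
The exponent \(\psi\) can grow at most quadratically, but \(H_a(2\xi)\) and \(H_a(2\eta)\) both decay like \(e^{-2\xi}\) and \(e^{-2\eta}\) up to polynomial factors. Working directly with the two terms of \(B_a\), rather than through \(e^{\psi}\), therefore gives exponential decay even far from the diagonal.

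\textbf{Step 2: applying the criterion.} The kernel \(k\) is smooth in \(\xi>0\) and tends to \(0\) as \(\xi\to\infty\). Its derivative is
\[
\partial_1k=\frac{\sqrt\eta}{2i\pi\sqrt\xi}\frac{B_a}{\xi-\eta}+\frac{\sqrt{\xi\eta}}{i\pi}\partial_\xi\frac{B_a}{\xi-\eta}.
\]
By Step 1, \(\bigl(\int_0^\infty|\partial_1k(u,\eta)|^2\,\mathrm d\eta\bigr)^{1/2}\le C(u^{-1/2}+1)(1+u)^{N}e^{-u}\). Multiplying by \(\sqrt u\) gives an integrable function on \((0,\infty)\), so \(A<\infty\) and \(\|k\|_1\le A\). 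This yields \(\sup_{L\ge1}\|T_{a,L}-\widetilde W_{a,L}\|_1\le 2A\).

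\textbf{Main obstacle.} The hard part is Step 1: obtaining uniform control of \(B_a/(\xi-\eta)\) and its \(\xi\)-derivative on all of \((0,\infty)^2\). Three regions need care. Near the diagonal, the second-order contact must be made quantitative. Far from the diagonal, where \(\psi\) is not small, I will bound each term of \(B_a\) separately, using \(|\xi-\eta|\ge1\). Near the boundary \(\xi\to0\), the smoothness of \(\ell\) at \(0\) is essential. I would first try to establish all three estimates through the \(\psi\)-representation together with these crude separate bounds off the diagonal.
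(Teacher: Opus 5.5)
Your proof is correct and follows the paper's argument: the identity $X_L(\delta)\,\delta=i(e^{-iL\delta}-1)$ turns the discrepancy into $k-M_LkM_L^{*}$ with an $L$-independent kernel, which is then shown to be trace class via Lemma~\ref{lem:trace-class-criterion}. The only difference is in Step~1. The paper factors $H_a(s)=e^{-s}p_a(s)$, so that $e^{-(\xi+\eta)}$ comes out of both terms exactly and one global Taylor expansion gives $\Delta_a=(\xi-\eta)^2\widetilde s_a$ with no splitting into regions; your bounded-cumulant estimates for $\log H_a$ with a near/off-diagonal split also work, and in fact give your bound with $N=0$.
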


\begin{proof}
Let \(R_{a,L}:=T_{a,L}-\widetilde W_{a,L}\). Then
\[
        R_{a,L}(\xi,\eta)
        =
        \frac{\sqrt{\xi\eta}}{\pi}
        X_L(\xi-\eta)\Delta_a(\xi,\eta),
        \quad
        \Delta_a(\xi,\eta)
        :=
        H_a(\xi+\eta)-\sqrt{H_a(2\xi)H_a(2\eta)}.
\]
Factor \(H_a(s)=e^{-s}p_a(s)\), where
\(p_a(s):=\int_0^{a-1}e^{-sy}\,\mathrm dy\), and set
\(q_a(t):=\sqrt{p_a(2t)}\). Then
\[
        \Delta_a(\xi,\eta)
        =
        e^{-(\xi+\eta)}B_a(\xi,\eta),
        \qquad
        B_a(\xi,\eta):=p_a(\xi+\eta)-q_a(\xi)q_a(\eta).
\]
The derivatives of \(p_a\) and \(q_a\) up to order \(3\) are bounded on
\([0,\infty)\). Since \(B_a(\xi,\xi)=0\) and
\(\partial_\eta B_a(\xi,\eta)|_{\eta=\xi}=0\), Taylor's formula gives
\(B_a(\xi,\eta)=(\xi-\eta)^2s_a(\xi,\eta)\), where \(s_a\) and
\(\partial_1s_a\) are bounded. Thus, with
\(\widetilde s_a(\xi,\eta):=e^{-(\xi+\eta)}s_a(\xi,\eta)\),
\[
        \Delta_a(\xi,\eta)=(\xi-\eta)^2\widetilde s_a(\xi,\eta),
        \qquad
        |\widetilde s_a(\xi,\eta)|+|\partial_1\widetilde s_a(\xi,\eta)|
        \le C_a e^{-(\xi+\eta)} .
\]
Using \(X_L(s)(s^2)=i s(e^{-isL}-1)\), we obtain
\[
        R_{a,L}(\xi,\eta)
        =
        i\frac{\sqrt{\xi\eta}}{\pi}
        (\xi-\eta)(e^{-i(\xi-\eta)L}-1)\widetilde s_a(\xi,\eta).
\]
Define
\(
        Q_a(\xi,\eta)
        :=
        i\frac{\sqrt{\xi\eta}}{\pi}
        (\xi-\eta)\widetilde s_a(\xi,\eta),
\)
and let \(M_L\) be multiplication by \(e^{-i\xi L}\). Then
\[
R_{a,L}=M_LQ_aM_L^*-Q_a, \qquad \|R_{a,L}\|_1\le2\|Q_a\|_1.
\]
It remains to
show that \(Q_a\in\mathcal S_1\).

For fixed \(\eta>0\), the map \(\xi\mapsto Q_a(\xi,\eta)\) is locally
absolutely continuous and tends to \(0\) as \(\xi\to\infty\). The preceding
bounds imply
\[
        |\partial_1Q_a(\xi,\eta)|
        \le
        C_a(\xi^{-1/2}+1+\xi+\xi^{3/2})(1+\eta^{3/2})e^{-(\xi+\eta)}.
\]
Hence
\[
        \left(
        \int_0^\infty|\partial_1Q_a(u,\zeta)|^2\,\mathrm d\zeta
        \right)^{1/2}
        \le
        C_a(u^{-1/2}+1+u+u^{3/2})e^{-u},
\]
and the quantity in Lemma~\ref{lem:trace-class-criterion} is finite. Therefore
\(Q_a\in\mathcal S_1\), and the unitary conjugation estimate above gives the
uniform bound.
\end{proof}

\subsection{Sobolev control for the logarithmic symbol}

For \(\theta\in S_\pi\), define
\[
        \sigma_\theta(\xi):=1+(e^\theta-1)r_a(\xi),
        \qquad
        \psi_\theta(\xi):=\log\sigma_\theta(\xi),
        \qquad \xi\in\mathbb R,
\]
where \(r_a\) is extended by zero to \(\mathbb R\), and the principal logarithm
is used. The argument below shows that the logarithm is well defined; since
\(r_a(0)=0\), \(\psi_\theta\) is continuous at \(\xi=0\).

For \(0<\theta_*<\pi\), write
\(S_{\theta_*}:=\{\theta\in\mathbb C:|\operatorname{Im}\theta|<\theta_*\}\),
and equip \(W^{1,2}(\mathbb R)\) with the norm
\[
        \|f\|_{W^{1,2}(\mathbb R)}
        :=
        \left(\|f\|_{L^2(\mathbb R)}^2+\|f'\|_{L^2(\mathbb R)}^2\right)^{1/2},
\]
where \(f'\) denotes the weak derivative.

\begin{lemma}
\label{lem:log-symbol-sobolev}
Fix \(0<\theta_*<\pi\), and let \(K\subset S_{\theta_*}\) be compact. Then:
\begin{enumerate}[label=(\roman*)]
\item There exists \(\delta_K>0\) such that
\[
        \operatorname{dist}\bigl(\sigma_\theta(\xi),(-\infty,0]\bigr)
        \ge \delta_K,
        \qquad \theta\in K,\quad \xi\in\mathbb R.
\]

\item The family \(\{\psi_\theta:\theta\in K\}\) is bounded in
\(W^{1,2}(\mathbb R)\).

\item For every compact \(\Omega\subset\mathbb C\), the families
\[
        \{e^{\lambda\psi_\theta}-1:\theta\in K,\lambda\in\Omega\},
        \qquad
        \{(e^{\lambda\psi_\theta}-1)\psi_\theta:\theta\in K,\lambda\in\Omega\}
\]
are bounded in \(W^{1,2}(\mathbb R)\).

\item If
\(
        \tau_\theta(x):=(2\pi)^{-1}
        \int_{\mathbb R}\psi_\theta(\xi)e^{ix\xi}\,\mathrm d\xi,
\)
then
\[
        \mathcal C^{\operatorname{WH}}_{a,\theta_*}(\theta)
        :=
        \int_0^\infty x\tau_\theta(x)\tau_\theta(-x)\,\mathrm dx
\]
defines a holomorphic function on \(S_{\theta_*}\).
\end{enumerate}
\end{lemma}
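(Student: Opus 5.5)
For part (i), I will write \(\theta=s+it\) with \(|t|\le t_K<\theta_*<\pi\) on \(K\) and \(u=r_a(\xi)\in[0,\rho_a]\); for \(\xi\le0\) we have \(\sigma_\theta=1\). As in Proposition~\ref{prop:lambda-holomorphic-extension}, the map \((\theta,u)\mapsto 1+(e^\theta-1)u=(1-u)+ue^{\theta}\) sends the compact set \(K\times[0,\rho_a]\) into a compact subset of \(\mathbb C\setminus(-\infty,0]\). Indeed, if \(u>0\) and the value is real, then \(\sin t=0\), so \(t=0\) and the value is \((1-u)+ue^s>0\). Continuity and compactness then give \(\delta_K>0\). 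It follows that \(\psi_\theta\) is well defined, holomorphic in \(\theta\), and that \(\log\) is uniformly Lipschitz on the compact image. In particular \(|\psi_\theta(\xi)|\le C_K r_a(\xi)\).

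For part (ii), the \(L^2\) bound follows from \(|\psi_\theta|\le C_Kr_a\) together with \(r_a\in L^2\), since \(0<r_a<1\) and \(r_a\in L^1\). For the derivative, on \(\xi>0\) we have \(\psi_\theta'=(e^\theta-1)r_a'/\sigma_\theta\), so \(|\psi_\theta'|\le C_K|r_a'|\) by (i). Here \(r_a'(\xi)=-2e^{-2\xi}+2ae^{-2a\xi}\) is bounded and exponentially decaying, hence lies in \(L^2(0,\infty)\). The one point needing care is \(\xi=0\). Since \(r_a\) extended by zero is continuous (because \(r_a(0^+)=0\)) and piecewise \(C^1\), it is globally Lipschitz, hence in \(W^{1,2}(\mathbb R)\) with weak derivative \(r_a'\mathbf 1_{(0,\infty)}\). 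Then \(\psi_\theta=F_\theta\circ r_a\), where \(F_\theta(u)=\log(1+(e^\theta-1)u)\) is smooth near \([0,\rho_a]\) with \(F_\theta(0)=0\). The chain rule for Lipschitz/\(W^{1,2}\) functions gives the bound uniformly on \(K\).

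Part (iii) is the same argument with \(G(u)=e^{\lambda F_\theta(u)}-1\) or \(G(u)=(e^{\lambda F_\theta(u)}-1)F_\theta(u)\). Each vanishes at \(u=0\) and has \(|G|,|\partial_uG|\) bounded uniformly for \((\theta,\lambda,u)\in K\times\Omega\times[0,\rho_a]\) by compactness, because \(F_\theta(u)\) stays in a compact set. Thus \(|G\circ r_a|\le C r_a\) and \(|(G\circ r_a)'|\le C|r_a'|\), which give uniform \(W^{1,2}\) bounds.

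Part (iv) is where I expect the main obstacle: showing that the integral converges and is holomorphic. I would use Plancherel, which gives \(\tau_\theta\in L^2\) and \(x\tau_\theta(x)\in L^2\) with norms \((2\pi)^{-1/2}\|\psi_\theta\|_2\) and \((2\pi)^{-1/2}\|\psi_\theta'\|_2\), since \(\widehat{(\cdot)}\) of \(ix\tau_\theta\) corresponds to \(\psi_\theta'\) up to convention. Cauchy--Schwarz then bounds \(\int_0^\infty|x\tau_\theta(x)\tau_\theta(-x)|\,dx\le\|x\tau_\theta\|_2\|\tau_\theta\|_2\le C\|\psi_\theta\|_{W^{1,2}}^2\), which is uniform on compact sets by (ii). For holomorphy, the map \(\theta\mapsto\psi_\theta\in W^{1,2}\) is holomorphic. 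To see this I will show weak holomorphy: pointwise \(\theta\mapsto\psi_\theta(\xi)\) and \(\psi'_\theta(\xi)\) are holomorphic with locally uniform dominations \(C_Kr_a\) and \(C_K|r_a'|\), so Morera and dominated convergence apply to pairings with \(L^2\) functions, and local boundedness upgrades this to strong holomorphy. Consequently \(\theta\mapsto\tau_\theta\) and \(\theta\mapsto x\tau_\theta\) are holomorphic into \(L^2\), as is \(\theta\mapsto\tau_\theta(-\cdot)\). Their bilinear pairing \(\int_0^\infty (x\tau_\theta)(\tau_\theta(-\cdot))\,dx\) is a continuous bilinear form of holomorphic maps, hence holomorphic on \(S_{\theta_*}\). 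Alternatively, one can apply Morera directly using the uniform bound.
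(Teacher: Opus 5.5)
Your proposal is correct and follows essentially the same route as the paper: the compact-image argument for (i), the pointwise bounds \(|\psi_\theta|\le C_K r_a\) and \(|\psi_\theta'|\le C_K|r_a'|\) for (ii)--(iii), and for (iv) Plancherel and Cauchy--Schwarz, together with holomorphy of \(\theta\mapsto\psi_\theta\) as a \(W^{1,2}(\mathbb R)\)-valued map composed with a continuous bilinear form. The differences are minor. You handle the kink at \(\xi=0\) through the chain rule for the globally Lipschitz zero-extension of \(r_a\), which is slightly more explicit than the paper's ``no boundary jump'' remark. You also obtain vector-valued holomorphy from weak holomorphy rather than from dominated convergence of difference quotients.
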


\begin{proof}
Let \(\rho_a:=\sup_{\xi>0}r_a(\xi)\in(0,1)\). Every value of
\(\sigma_\theta(\xi)\) has the form \((1-u)+ue^\theta\), with
\(0\le u\le\rho_a\). For \(\theta\in K\), the compact image of
\(K\times[0,\rho_a]\) under this map is disjoint from \((-\infty,0]\): if
\((1-u)+ue^\theta\) were non-positive real, then \(u>0\) and the imaginary
part would force \(\sin(\operatorname{Im}\theta)=0\); since
\(K\subset S_{\theta_*}\) with \(\theta_*<\pi\), this gives
\(\operatorname{Im}\theta=0\), and then \((1-u)+ue^\theta>0\). This proves the
distance estimate.

The principal logarithm is therefore uniformly well defined on a neighborhood
of this compact image. Hence \(\sup_{\theta\in K}\|\psi_\theta\|_{L^\infty}<\infty\), and
\[
        \psi_\theta'(\xi)
        =
        \frac{(e^\theta-1)r_a'(\xi)}
        {1+(e^\theta-1)r_a(\xi)},
        \qquad \xi>0.
\]
The denominator is uniformly bounded away from zero, and \(r_a'\in L^2(0,\infty)\).
Moreover, since the logarithm vanishes at \(1\) and is Lipschitz near the
compact image, \(|\psi_\theta(\xi)|\le C_K r_a(\xi)\). As
\(r_a\in L^1(0,\infty)\cap L^2(0,\infty)\), the \(L^2\)-norms are uniformly
bounded. Finally, \(r_a(\xi)=O(\xi)\) at \(0\), so \(\psi_\theta(\xi)=O(\xi)\)
there and no boundary jump occurs.

For the third assertion, set
\(\widetilde\sigma_{\theta,\lambda}:=e^{\lambda\psi_\theta}-1\). Since
\(\lambda\in\Omega\) and \(\psi_\theta\) is uniformly bounded in \(L^\infty\),
\[
        |\widetilde\sigma_{\theta,\lambda}|\le C_{K,\Omega}|\psi_\theta|,
        \qquad
        \partial_\xi\widetilde\sigma_{\theta,\lambda}
        =
        \lambda e^{\lambda\psi_\theta}\psi_\theta'.
\]
Thus \(\widetilde\sigma_{\theta,\lambda}\) is uniformly bounded in
\(W^{1,2}(\mathbb R)\), again with no boundary contribution at \(0\). Similarly,
\[
        \partial_\xi\bigl((e^{\lambda\psi_\theta}-1)\psi_\theta\bigr)
        =
        \lambda e^{\lambda\psi_\theta}\psi_\theta'\psi_\theta
        +
        (e^{\lambda\psi_\theta}-1)\psi_\theta',
\]
and the right-hand side is uniformly bounded in \(L^2(\mathbb R)\), while
\(|(e^{\lambda\psi_\theta}-1)\psi_\theta|\le C_{K,\Omega}|\psi_\theta|\). Since
\((e^{\lambda\psi_\theta(\xi)}-1)\psi_\theta(\xi)=O(\xi^2)\) at \(0\), there is
again no boundary contribution.

For the fourth assertion, \(\psi_\theta\in L^1(\mathbb R)\), so
\(\tau_\theta\) is well defined. By Plancherel,
\[
        \int_{\mathbb R}(1+x^2)|\tau_\theta(x)|^2\,\mathrm dx
        \le C\|\psi_\theta\|_{W^{1,2}(\mathbb R)}^2
\]
locally uniformly in \(\theta\). Hence the integral defining
\(\mathcal C^{\operatorname{WH}}_{a,\theta_*}\) is locally uniformly absolutely
convergent by Cauchy--Schwarz.

It remains to note holomorphy. On compact subsets of \(S_{\theta_*}\),
\[
        \partial_\theta\psi_\theta(\xi)
        =
        \frac{e^\theta r_a(\xi)}
        {1+(e^\theta-1)r_a(\xi)},
        \qquad
        \partial_\xi\partial_\theta\psi_\theta(\xi)
        =
        \frac{e^\theta r_a'(\xi)}
        {\left(1+(e^\theta-1)r_a(\xi)\right)^2}
\]
are dominated in \(L^2\) by constant multiples of \(r_a\) and \(|r_a'|\).
Dominated convergence for difference quotients shows that
\(\theta\mapsto\psi_\theta\) is holomorphic as a \(W^{1,2}(\mathbb R)\)-valued
map. The inverse Fourier transform is continuous from \(W^{1,2}(\mathbb R)\)
to \(L^2(\mathbb R,|x|\,\mathrm dx)\), and the bilinear form
\(B(f,g):=\int_0^\infty x f(x)g(-x)\,\mathrm dx\) is continuous on this space.
Since \(\mathcal C^{\operatorname{WH}}_{a,\theta_*}(\theta)=B(\tau_\theta,\tau_\theta)\),
this completes the proof.
\end{proof}

\subsection{Product-defect estimates for Wiener--Hopf operators}

\begin{lemma}
\label{lem:wiener-hopf-product-defect}
Let \(\sigma,\tau\in L^\infty(\mathbb R)\). Assume that
\(\check\sigma,\check\tau\in L^1(\mathbb R)\) and
\[
\int_0^\infty x\left(
|\check\sigma(x)|^2+|\check\sigma(-x)|^2
+|\check\tau(x)|^2+|\check\tau(-x)|^2
\right)\,\mathrm dx<\infty .
\]
Then, for every \(L>0\), \(W_L(\sigma\tau)-W_L(\sigma)W_L(\tau)\) is trace
class on \(L^2(0,L)\), and
\[
\begin{aligned}
\|W_L(\sigma\tau)-W_L(\sigma)W_L(\tau)\|_1
&\le
\left(\int_0^\infty x|\check\sigma(x)|^2\,\mathrm dx\right)^{1/2}
\left(\int_0^\infty x|\check\tau(-x)|^2\,\mathrm dx\right)^{1/2} \\
&\quad+
\left(\int_0^\infty x|\check\sigma(-x)|^2\,\mathrm dx\right)^{1/2}
\left(\int_0^\infty x|\check\tau(x)|^2\,\mathrm dx\right)^{1/2}.
\end{aligned}
\]
\end{lemma}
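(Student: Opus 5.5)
This is the classical Widom product-defect identity, which I would prove in the continuous setting.

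\emph{Step 1: integral kernels.} With the convention of Definition~\ref{def:wiener-hopf-operator} and \(\check\sigma\in L^1\), the operator \(W_L(\sigma)\) has kernel \(\check\sigma(x'-x)\) on \((0,L)^2\). So \(W_L(\sigma)\) is the compression \(P_L C(\sigma)P_L\) of the convolution operator \(C(\sigma)f=\check\sigma*f\) on \(L^2(\mathbb R)\), where \(P_L\) is multiplication by \(\mathbf 1_{(0,L)}\). Since \(C(\sigma\tau)=C(\sigma)C(\tau)\) on \(L^2(\mathbb R)\), we get
\[
W_L(\sigma\tau)-W_L(\sigma)W_L(\tau)=P_LC(\sigma)(\operatorname{Id}-P_L)C(\tau)P_L .
\]
Splitting \(\operatorname{Id}-P_L=Q_-+Q_+\), with \(Q_-\) and \(Q_+\) multiplication by \(\mathbf 1_{(-\infty,0]}\) and \(\mathbf 1_{[L,\infty)}\), the defect becomes a sum of two terms. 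The bound follows from \(\|AB\|_1\le\|A\|_2\|B\|_2\), applied once to each term.

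\emph{Step 2: the term through \((-\infty,0]\).} Write it as \(A_-B_-\) with \(A_-:=P_LC(\sigma)Q_-\) and \(B_-:=Q_-C(\tau)P_L\). Their Hilbert--Schmidt norms are
\[
\|A_-\|_2^2=\int_0^L\int_{-\infty}^0|\check\sigma(x'-y)|^2\,\mathrm dy\,\mathrm dx'
\le\int_0^\infty u\,|\check\sigma(u)|^2\,\mathrm du ,
\]
after substituting \(u=x'-y>0\). For fixed \(u\), the set of admissible \(x'\) has measure \(\min(u,L)\le u\). In the same way,
\[
\|B_-\|_2^2=\int_{-\infty}^0\int_0^L|\check\tau(y-x)|^2\,\mathrm dx\,\mathrm dy\le\int_0^\infty u\,|\check\tau(-u)|^2\,\mathrm du .
\]
This gives the first product in the statement.

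\emph{Step 3: the term through \([L,\infty)\).} The same computation applies, with \(y\ge L\). Here \(x'-y\le0\) in the \(\sigma\)-factor and \(y-x\ge0\) in the \(\tau\)-factor, so the roles of \(\pm\) are swapped. This yields the second product. Both products are finite by hypothesis, so the defect is trace class and the triangle inequality gives the stated bound.

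\emph{Main obstacle.} The only delicate point is Step 1, namely justifying the kernel representation and the identity \(C(\sigma\tau)=C(\sigma)C(\tau)\) under the stated assumptions.
\begin{itemize}
\item Since \(\check\sigma,\check\tau\in L^1\), both \(\sigma\) and \(\tau\) are (equal a.e.\ to) Fourier transforms of \(L^1\) functions. Then \(\sigma\tau\) is the Fourier transform of \(\check\sigma*\check\tau\in L^1\), so the convolution kernel of \(C(\sigma\tau)\) is \(\check\sigma*\check\tau\) and \(C(\sigma\tau)=C(\sigma)C(\tau)\) as bounded operators. No convergence issue arises.
\item The mixed operators \(A_\pm,B_\pm\) are Hilbert--Schmidt by the finiteness of the weighted integrals. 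The factorization is therefore legitimate, and there is no need to control \(W_L(\sigma)\) itself in trace norm.
\end{itemize}
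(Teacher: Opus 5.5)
Your proposal is correct and follows the paper's proof essentially step by step. The paper likewise writes $W_L(\sigma)=J_L^*C_{\check\sigma}J_L$ and uses $\check{\sigma\tau}=\check\sigma*\check\tau$ to express the defect as $J_L^*C_{\check\sigma}(\operatorname{Id}-\Gamma_L)C_{\check\tau}J_L$; it then splits $\operatorname{Id}-\Gamma_L$ into the two half-line multiplications and bounds each piece by a product of Hilbert--Schmidt norms, using the same $\min\{u,L\}\le u$ overlap count. Your justification that $\sigma=\widehat{\check\sigma}$ a.e., so that the multiplier and convolution operators agree, is a detail the paper leaves implicit.
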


\begin{proof}
Let \(J_L:L^2(0,L)\to L^2(\mathbb R)\) be extension by zero, and let
\(\Gamma_L:=J_LJ_L^*\), multiplication by \(\mathbf 1_{(0,L)}\). If
\(C_\kappa\) denotes convolution by \(\kappa\), then
\(W_L(\sigma)=J_L^*C_{\check\sigma}J_L\). Since
\(\check{\sigma\tau}=\check\sigma*\check\tau\),
\[
        W_L(\sigma\tau)-W_L(\sigma)W_L(\tau)
        =
        J_L^*C_{\check\sigma}(\operatorname{Id}-\Gamma_L)C_{\check\tau}J_L.
\]
We decompose
\(\operatorname{Id}-\Gamma_L=M_{\mathbf 1_{(-\infty,0)}}+
M_{\mathbf 1_{(L,\infty)}}\).

The first term factors through \(L^2(0,\infty)\) with Hilbert--Schmidt kernels
\(A_L^+(x,s)=\check\sigma(x+s)\) and
\(B_L^+(s,z)=\check\tau(-s-z)\), \(x,z\in(0,L)\), \(s>0\). Hence
\[
        \|A_L^+\|_{\mathrm{HS}}^2
        =
        \int_0^\infty \min\{L,u\}|\check\sigma(u)|^2\,\mathrm du
        \le
        \int_0^\infty u|\check\sigma(u)|^2\,\mathrm du,
\]
and similarly
\(\|B_L^+\|_{\mathrm{HS}}^2
\le \int_0^\infty u|\check\tau(-u)|^2\,\mathrm du\).

The second term factors through \(L^2(0,\infty)\) with kernels
\(A_L^-(x,s)=\check\sigma(x-L-s)\) and
\(B_L^-(s,z)=\check\tau(L+s-z)\), and satisfies
\[
        \|A_L^-\|_{\mathrm{HS}}^2
        \le
        \int_0^\infty u|\check\sigma(-u)|^2\,\mathrm du,
        \qquad
        \|B_L^-\|_{\mathrm{HS}}^2
        \le
        \int_0^\infty u|\check\tau(u)|^2\,\mathrm du.
\]
The claim follows from the triangle inequality and
\(\|AB\|_1\le \|A\|_{\mathrm{HS}}\|B\|_{\mathrm{HS}}\).
\end{proof}

\subsection{Uniform bounds for the normalized canonical determinant}

Define
\[
D_{a,L}(\theta)
:=
\det\left(\operatorname{Id}+(e^{\theta}-1)\widetilde{W}_{a,L}\right)
\exp\left(-L\Lambda_a(\theta)\right).
\]

\begin{lemma}
\label{lem:canonical-normalized-bounded}
Fix \(0<\theta_*<\pi\), and let \(K\subset S_{\theta_*}\) be compact. Then each \(D_{a,L}\) is holomorphic on \(S_{\theta_*}\), and
\begin{equation}
\label{eq:canonical-normalized-bound}
\sup_{L\geq 1}\sup_{\theta\in K}|D_{a,L}(\theta)|<\infty.
\end{equation}
\end{lemma}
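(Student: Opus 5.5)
By Lemma~\ref{lem:wh-equivalence}, \(D_{a,L}(\theta)=\det(\operatorname{Id}+(e^\theta-1)W_L(r_a))\,e^{-L\Lambda_a(\theta)}\). Holomorphy on \(S_{\theta_*}\) is immediate: \(\theta\mapsto(e^\theta-1)W_L(r_a)\) is entire with values in \(\mathcal S_1\), so the Fredholm determinant is entire, and \(\Lambda_a\) is holomorphic on \(S_\pi\) by Proposition~\ref{prop:lambda-holomorphic-extension}. For the uniform bound I would write the determinant through a Wiener--Hopf factorization in the spirit of Widom's proof of the strong Szeg\H{o} theorem. Since \(\sigma_\theta=e^{\psi_\theta}\), we have \(\operatorname{Id}+(e^\theta-1)W_L(r_a)=W_L(e^{\psi_\theta})\). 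Moreover \(\frac{1}{2\pi}\int_{\mathbb R}\psi_\theta\,\mathrm d\xi=\Lambda_a(\theta)\), so \(\operatorname{tr}W_L(\psi_\theta)=L\check\psi_\theta(0)=L\Lambda_a(\theta)\).

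Fix \(\theta\in K\) and introduce the interpolation \(F_L(\lambda):=\log\det W_L(e^{\lambda\psi_\theta})-\lambda L\Lambda_a(\theta)\) for \(\lambda\in[0,1]\). Here \(W_L(e^{\lambda\psi_\theta})-\operatorname{Id}=W_L(e^{\lambda\psi_\theta}-1)\) is trace class, since \(e^{\lambda\psi_\theta}-1\) is a bounded multiple of \(\psi_\theta\), and \(|\psi_\theta|\le C_Kr_a\in L^1\). Formally,
\[
F_L'(\lambda)=\operatorname{tr}\bigl(W_L(e^{\lambda\psi_\theta})^{-1}W_L(\psi_\theta e^{\lambda\psi_\theta})\bigr)-\operatorname{tr}W_L(\psi_\theta).
\]
Lemma~\ref{lem:wiener-hopf-product-defect} gives
\[
W_L(e^{-\lambda\psi_\theta})W_L(e^{\lambda\psi_\theta})=\operatorname{Id}+E_L,\qquad W_L(\psi_\theta e^{\lambda\psi_\theta})=W_L(\psi_\theta)W_L(e^{\lambda\psi_\theta})+E_L',
\]
with \(\|E_L\|_1,\|E_L'\|_1\le C\) uniformly in \(L\), \(\lambda\in[0,1]\), and \(\theta\in K\). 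The hypotheses of that lemma follow from Lemma~\ref{lem:log-symbol-sobolev}(ii),(iii), because by Plancherel \(\int x|\check f(\pm x)|^2\le C\|f\|_{W^{1,2}}^2\). Substituting these identities, the difference \(F_L'(\lambda)\) reduces to traces of products of the defects \(E_L,E_L'\) with bounded operators, hence to an \(O(1)\) quantity. Integrating in \(\lambda\) then gives \(|\log D_{a,L}(\theta)|\le C\).

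The main obstacle is the uniform invertibility of \(W_L(e^{\lambda\psi_\theta})\) for complex \(\theta\), since its numerical range need not avoid \(0\). I would first try to avoid inversion altogether, using the trace-class bound
\[
|\det(\operatorname{Id}+A)|\le e^{\|A\|_1}.
\]
Write \(W_L(\sigma_\theta)=W_L(e^{\psi_\theta/2})\,W_L(e^{\psi_\theta/2})+E_L''\); a cleaner route is to factor \(\sigma_\theta=e^{\psi_+}e^{\psi_-}\) into boundary values analytic in the upper and lower half planes, with \(\psi_++\psi_-=\psi_\theta\). Then \(W_L(e^{\psi_-}e^{\psi_+})=W_L(e^{\psi_-})W_L(e^{\psi_+})\) holds only up to a defect, whereas the reversed order \(W_L(e^{\psi_+})W_L(e^{\psi_-})\) is handled exactly by triangularity. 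The triangular factors \(W_L(e^{\psi_\pm})\) have determinant \(e^{L\check\psi_\pm(0)}\). Hence \(D_{a,L}(\theta)=\det\bigl(\operatorname{Id}+\text{(uniformly trace-class defect)}\bigr)\), where the defect is of Hankel-product type and its \(\mathcal S_1\)-norm is controlled by Lemma~\ref{lem:wiener-hopf-product-defect} together with the \(W^{1,2}\) bounds on \(e^{\pm\psi_\pm}-1\). To get those \(W^{1,2}\) bounds I would use that the Riesz projections are bounded on \(W^{1,2}\) and that \(\psi_\pm\) are then bounded, which needs a short extra argument, for instance via the \(H^{1/2}\) embedding into VMO, and may be the delicate point. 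Applying \(|\det(\operatorname{Id}+A)|\le e^{\|A\|_1}\) gives \eqref{eq:canonical-normalized-bound}.
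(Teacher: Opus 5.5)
\textbf{There is a genuine gap: the uniform bound is not established by either of your routes.} Your identification $D_{a,L}(\theta)=\det W_L(\sigma_\theta)\,e^{-\operatorname{tr}W_L(\psi_\theta)}$ and the holomorphy argument are fine.

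The log-derivative route needs $\sup_{L,\lambda,\theta}\|W_L(e^{\lambda\psi_\theta})^{-1}\|<\infty$ for complex $\theta$. You flag this and do not supply it.

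The factorization route rests on two claims that fail for operators truncated to $(0,L)$.
\begin{enumerate}
\item Neither order of the product is exact. By the identity in the proof of Lemma~\ref{lem:wiener-hopf-product-defect},
\[
W_L(fg)-W_L(f)W_L(g)=J_L^*C_{\check f}\bigl(M_{\mathbf 1_{(-\infty,0)}}+M_{\mathbf 1_{(L,\infty)}}\bigr)C_{\check g}J_L ,
\]
so there is a Hankel term at each end of $(0,L)$. If $\check f$ and $\check g$ are supported on opposite half-lines, one of the two terms generally survives \emph{whichever} order you take. Hence neither $W_L(e^{\psi_+})W_L(e^{\psi_-})$ nor $W_L(e^{\psi_-})W_L(e^{\psi_+})$ equals $W_L(\sigma_\theta)$. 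Triangularity gives exact identities only on the half-line, or for a sandwich. For instance, with $\check\psi_\pm$ supported on $\pm[0,\infty)$,
\[
W_L(e^{-\psi_+})W_L(\sigma_\theta)W_L(e^{-\psi_-})=P_LW(e^{-\psi_+})W(\sigma_\theta)W(e^{-\psi_-})P_L ,
\]
where $W(\cdot)$ are the half-line operators and $P_L$ is the projection onto $L^2(0,L)$.
\item The triangular factors are not of determinant class. Since $\check\psi_+=\check\psi_\theta\mathbf 1_{(0,\infty)}$, the kernel of $W_L(e^{\psi_+})-\operatorname{Id}$ jumps by $\check\psi_\theta(0)=\Lambda_a(\theta)$ across the diagonal. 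So it is Hilbert--Schmidt but not trace class. Were it trace class, it would be a trace-class Volterra operator, and Lidskii would force its determinant to be $1$. Thus ``$\det W_L(e^{\psi_\pm})=e^{L\check\psi_\pm(0)}$'' has no meaning here, and the normalization $e^{-L\Lambda_a(\theta)}$ is never actually produced. Recovering it would need a regularized ($\det_2$) computation for the product of the factors, which you do not carry out.
\end{enumerate}
Boundedness of $\psi_\pm$, by contrast, is not delicate: $W^{1,2}(\mathbb R)\hookrightarrow L^\infty(\mathbb R)$.

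The missing idea, which the paper uses, is to multiply by an exponential instead of inverting or factorizing. Put $G_{L,\theta}(\lambda):=W_L(e^{\lambda\psi_\theta})\exp(-W_L(\lambda\psi_\theta))$. Because $W_L(\psi_\theta)$ is trace class with trace $L\Lambda_a(\theta)$, multiplicativity and $\det e^{T}=e^{\operatorname{tr}T}$ give $D_{a,L}(\theta)=\det G_{L,\theta}(1)$. Also $G_{L,\theta}(0)=\operatorname{Id}$, and
\[
\frac{\mathrm d}{\mathrm d\lambda}G_{L,\theta}(\lambda)
=\bigl[W_L(\widetilde\sigma_{\theta,\lambda}\psi_\theta)-W_L(\widetilde\sigma_{\theta,\lambda})W_L(\psi_\theta)\bigr]\exp(-W_L(\lambda\psi_\theta)),
\qquad \widetilde\sigma_{\theta,\lambda}:=e^{\lambda\psi_\theta}-1 .
\]
This is an exact product defect times an operator of norm at most $e^{\|\lambda\psi_\theta\|_{\infty}}$. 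The defect is uniformly bounded in $\mathcal S_1$ by Lemmas~\ref{lem:log-symbol-sobolev} and~\ref{lem:wiener-hopf-product-defect}. Hence $G_{L,\theta}(1)-\operatorname{Id}=\int_0^1G_{L,\theta}'(\lambda)\,\mathrm d\lambda$ is uniformly trace class for $L\ge1$ and $\theta\in K$. The estimate $|\det(\operatorname{Id}+A)|\le e^{\|A\|_1}$ then gives the uniform bound, and no invertibility of $W_L(e^{\lambda\psi_\theta})$ is ever needed.
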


\begin{proof}
By Lemma~\ref{lem:wh-equivalence},
\[
        \det\left(\operatorname{Id}+(e^\theta-1)\widetilde W_{a,L}\right)
        =
        \det W_L(\sigma_\theta),
        \qquad \sigma_\theta=e^{\psi_\theta},
\]
where \(\det W_L(\sigma_\theta)\) means
\(\det(\operatorname{Id}+W_L(\sigma_\theta-1))\). Since
\(\psi_\theta\in L^1(\mathbb R)\) locally uniformly in \(\theta\),
\[
\operatorname{tr}W_L(\psi_\theta)=\frac{L}{2\pi}\int_{\mathbb R}\psi_\theta(\xi)\,\mathrm d\xi
=L\Lambda_a(\theta).
\]
Hence
\[
        D_{a,L}(\theta)
        =
        \det W_L(\sigma_\theta)\exp\bigl(-\operatorname{tr}W_L(\psi_\theta)\bigr).
\]

Let \(\Omega\subset\mathbb C\) be a closed disk containing \([0,1]\). For
\(\lambda\in\Omega\), set
\[
        \sigma_{\theta,\lambda}:=e^{\lambda\psi_\theta},\qquad
        \widetilde\sigma_{\theta,\lambda}:=e^{\lambda\psi_\theta}-1,\qquad
        G_{L,\theta}(\lambda)
        :=
        W_L(\sigma_{\theta,\lambda})\exp(-W_L(\lambda\psi_\theta)).
\]
Then \(G_{L,\theta}(0)=\operatorname{Id}\), and multiplicativity of Fredholm
determinants, together with \(\det(e^T)=e^{\operatorname{tr}T}\), gives
\[
        D_{a,L}(\theta)=\det G_{L,\theta}(1).
\]
Differentiating in \(\lambda\),
\[
        \frac{\mathrm d}{\mathrm d\lambda}G_{L,\theta}(\lambda)
        =
        \left[
        W_L(\widetilde\sigma_{\theta,\lambda}\psi_\theta)
        -
        W_L(\widetilde\sigma_{\theta,\lambda})W_L(\psi_\theta)
        \right]\exp(-W_L(\lambda\psi_\theta)).
\]

By Lemma~\ref{lem:log-symbol-sobolev}, the families
\(\widetilde\sigma_{\theta,\lambda}\), \(\psi_\theta\), and
\(\widetilde\sigma_{\theta,\lambda}\psi_\theta\) are uniformly bounded in
\(W^{1,2}(\mathbb R)\) for \(\theta\in K\), \(\lambda\in\Omega\). Plancherel
and Cauchy--Schwarz give uniform \(L^1\)-bounds and the weighted \(L^2\)-bounds
required in Lemma~\ref{lem:wiener-hopf-product-defect}. Therefore
\[
        \sup_{L\ge1}\sup_{\theta\in K}\sup_{\lambda\in\Omega}
        \left\|
        W_L(\widetilde\sigma_{\theta,\lambda}\psi_\theta)
        -
        W_L(\widetilde\sigma_{\theta,\lambda})W_L(\psi_\theta)
        \right\|_1
        <\infty .
\]
Moreover,
\(\|\exp(-W_L(\lambda\psi_\theta))\|\le
\exp(\|\lambda\psi_\theta\|_{L^\infty})\), uniformly in
\(\theta\in K\), \(\lambda\in\Omega\). Thus
\[
        \sup_{L\ge1}\sup_{\theta\in K}\sup_{\lambda\in\Omega}
        \left\|
        \frac{\mathrm d}{\mathrm d\lambda}G_{L,\theta}(\lambda)
        \right\|_1
        <\infty .
\]
The same \(W^{1,2}\)-continuity and product-defect estimate give trace-norm
continuity in \(\lambda\), hence
\[
G_{L,\theta}(1)-\operatorname{Id}
=\int_0^1 \frac{\mathrm d}{\mathrm d\lambda}G_{L,\theta}(\lambda)\,\mathrm d\lambda
\]
in trace norm, uniformly bounded for \(L\ge1\), \(\theta\in K\). The estimate
\(|\det(\operatorname{Id}+A)|\le e^{\|A\|_1}\) gives
\eqref{eq:canonical-normalized-bound}.

Finally, \(\theta\mapsto(e^\theta-1)\widetilde W_{a,L}\) is trace-class
holomorphic, and \(\Lambda_a\) is holomorphic on \(S_{\theta_*}\) by the
compact-set domination
\(|\log(1+(e^\theta-1)r_a(\xi))|\le C_K r_a(\xi)\) and
\(r_a\in L^1(0,\infty)\). Thus \(D_{a,L}\) is holomorphic on \(S_{\theta_*}\).
\end{proof}

\subsection{Canonical strong Wiener--Hopf asymptotics}

We use the following form of Widom's strong Wiener--Hopf theorem. Recall that an operator \(A\) is of determinant class if \(A-\operatorname{Id}\in\mathcal S_1\); its determinant is then defined as the corresponding Fredholm determinant \cite[Chapter~IV]{GohbergKrein1969}.

\begin{theorem}[Widom {\cite[Theorem~3]{Widom1982}}]
\label{thm:widom-wh}
Let \(\sigma\in L^{\infty}(\mathbb{R})\), and let \(W_L(\sigma)\) be the truncated Wiener--Hopf operator on \(L^2(0,L)\). Suppose that there exists a bounded determination \(\psi\) of \(\log\sigma\) whose inverse Fourier transform \(\check{\psi}\) satisfies
\[
\int_{\mathbb{R}}|x|\,|\check{\psi}(x)|^2\,\mathrm{d}x<\infty.
\]
Then \(W_L(\sigma)\exp(-W_L(\psi))\) is of determinant class, and
\[
\det\bigl(W_L(\sigma)\exp(-W_L(\psi))\bigr)
\to
\exp\left(
\int_0^{\infty}x\check{\psi}(x)\check{\psi}(-x)\,\mathrm{d}x
\right)
\]
as \(L\to\infty\). If, in addition, \(\psi\in L^1(\mathbb{R})\), then
\[
\det W_L(\sigma)
\exp\left(
-\frac{L}{2\pi}\int_{\mathbb{R}}\psi(\xi)\,\mathrm{d}\xi
\right)
\to
\exp\left(
\int_0^{\infty}x\check{\psi}(x)\check{\psi}(-x)\,\mathrm{d}x
\right),
\]
where \(\det W_L(\sigma)\) denotes the Fredholm determinant of \(W_L(\sigma)=\operatorname{Id}+W_L(\sigma-1)\).
\end{theorem}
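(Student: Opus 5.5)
This is Widom's theorem, quoted from \cite[Theorem~3]{Widom1982}, and in the paper it would simply be invoked. If I had to prove it, I would use the same $\lambda$-interpolation device as in the proof of Lemma~\ref{lem:canonical-normalized-bounded}, combined with a density argument in the half-order Sobolev norm
\[
        \|\psi\|_{*}^2:=\int_{\mathbb R}|x|\,|\check\psi(x)|^2\,\mathrm dx .
\]

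\textbf{Step 1 (determinant class and a uniform bound).} Set
\[
        G_L(\lambda):=W_L(e^{\lambda\psi})\exp\bigl(-W_L(\lambda\psi)\bigr),
\]
so that $G_L(0)=\operatorname{Id}$. Differentiating in $\lambda$ gives
\[
        G_L'(\lambda)
        =
        \bigl[W_L\bigl((e^{\lambda\psi}-1)\psi\bigr)
        -W_L\bigl(e^{\lambda\psi}-1\bigr)W_L(\psi)\bigr]
        \exp\bigl(-W_L(\lambda\psi)\bigr).
\]
Lemma~\ref{lem:wiener-hopf-product-defect} bounds the bracket in trace norm, uniformly in $L$. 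For this one needs the symbols $e^{\lambda\psi}-1$ and $(e^{\lambda\psi}-1)\psi$ to have finite $\|\cdot\|_*$, which holds because $H^{1/2}\cap L^\infty$ is an algebra stable under composition with entire functions. I would check this via the Gagliardo form
\[
        \|f\|_*^2\asymp\iint\frac{|f(\xi)-f(\eta)|^2}{|\xi-\eta|^2}\,\mathrm d\xi\,\mathrm d\eta .
\]
One also needs $\check\sigma\in L^1$, which I would drop by approximating or by working directly with the Hankel-type Hilbert--Schmidt factorization from that lemma. Integrating $G_L'$ over $[0,1]$ then shows $G_L(1)-\operatorname{Id}\in\mathcal S_1$ with a bound uniform in $L$.

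The same computation, applied to two symbols $\psi_1,\psi_2$, gives a Lipschitz estimate
\[
        \bigl|\det G_L^{\psi_1}(1)-\det G_L^{\psi_2}(1)\bigr|
        \le C\,\bigl(\|\psi_1-\psi_2\|_*+\|\psi_1-\psi_2\|_\infty\bigr),
\]
uniformly in $L$ on bounded sets, using $|\det(I+A)-\det(I+B)|\le\|A-B\|_1e^{1+\|A\|_1+\|B\|_1}$.

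\textbf{Step 2 (smooth symbols).} For $\psi$ in a dense class, say $\check\psi\in C_c^\infty$, split $\psi=\psi_++\psi_-$, where $\check\psi_\pm$ is supported in $\pm(0,\infty)$. Then $W_L(e^{\psi_-}e^{\psi_+})$ factors up to Hankel-type corrections, via the identity
\[
        W(ab)=W(a)W(b)+H(a)H(\tilde b)
\]
on the half-line, applied at both ends of $(0,L)$. The corrections localized at the two ends decouple as $L\to\infty$. This yields the Kac--Achiezer/Widom limit
\[
        \det\bigl(W(e^{\psi})W(e^{-\psi})\bigr)
        =
        \exp\Bigl(\operatorname{tr}H(\psi)H(\tilde\psi)\Bigr)
        =
        \exp\Bigl(\int_0^\infty x\check\psi(x)\check\psi(-x)\,\mathrm dx\Bigr).
\]
The last equality uses the standard commutator computation $\log\det(e^{A}e^{B}e^{-A-B})=\tfrac12\operatorname{tr}[A,B]$, applied to the half-line Wiener--Hopf operators.

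\textbf{Step 3 (density).} Approximate a general bounded $\psi$ by such $\psi_n$, with convergence in $\|\cdot\|_*$ and boundedness in $L^\infty$. Step 1 gives $L$-uniform control of the errors, and the limit functional $\exp\int_0^\infty x\check\psi(x)\check\psi(-x)\,\mathrm dx$ is continuous in $\|\cdot\|_*$ by Cauchy--Schwarz. This is the main obstacle: the $L^\infty$ control of $e^{\lambda\psi_n}$ must be arranged so that the product-defect bounds remain uniform, which may require mollification rather than Fourier truncation.

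\textbf{Second statement.} If in addition $\psi\in L^1$, then $W_L(\psi)$ is trace class with
\[
        \operatorname{tr}W_L(\psi)=\frac{L}{2\pi}\int_{\mathbb R}\psi(\xi)\,\mathrm d\xi,
\]
and $\det e^{-W_L(\psi)}=e^{-\operatorname{tr}W_L(\psi)}$. The second display then follows from the first by multiplicativity of Fredholm determinants.
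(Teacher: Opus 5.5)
\textbf{Relation to the paper.} The paper does not prove this statement. It quotes Widom and applies the result only for real $\theta$, where $\sigma_\theta>0$ and $\psi_\theta\in L^1\cap W^{1,2}(\mathbb R)$. Your opening sentence is therefore the paper's entire treatment. Your Step~1, which is the argument of Lemma~\ref{lem:canonical-normalized-bounded}, and your derivation of the second display from the first are correct. As a proof of the theorem as stated, however, the outline has a genuine gap in Step~3, and Step~2 only asserts its key step.

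\textbf{Step 3: the real obstacle.} Keeping $\sup_n\|\psi_n\|_\infty$ finite is easy; mollification does it. The problem is that your Lipschitz estimate needs $\|\psi-\psi_n\|_\infty\to0$, because that term is what controls $\exp(-W_L(\lambda\psi))-\exp(-W_L(\lambda\psi_n))$ in operator norm. Such convergence is impossible in general. The space $\dot H^{1/2}(\mathbb R)\cap L^\infty$ contains discontinuous functions (e.g.\ a cut-off $\sin\log\log(e/|\xi|)$), while your approximants are continuous. Bounded a.e.\ convergence gives only strong convergence of the exponentials, hence nothing uniform in $L$. (For the paper's $\psi_\theta$, which are uniformly continuous and vanish at infinity, sup-norm approximation is available, so your argument would suffice there.)

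\textbf{Step 3: a fix.} The missing idea is a \emph{multiplicative} comparison, in which sup-norms enter only through constants. Put $\varphi_n:=\psi-\psi_n$, $P:=W_L(\psi_n)$, $Q:=W_L(\varphi_n)$, and write $W_L(e^{\psi})=W_L(e^{\psi_n})W_L(e^{\varphi_n})+D_n$. Then
\[
W_L(e^{\psi_n})W_L(e^{\varphi_n})e^{-(P+Q)}
=\bigl(W_L(e^{\psi_n})e^{-P}\bigr)\,e^{P}\bigl(W_L(e^{\varphi_n})e^{-Q}\bigr)e^{-P}\,\bigl(e^{P}e^{Q}e^{-(P+Q)}\bigr).
\]
The three factors are controlled as follows.

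Lemma~\ref{lem:wiener-hopf-product-defect}, in the form without the $L^1$ hypothesis that you intended, applies to $e^{\psi_n}-1$ and $e^{\varphi_n}-1$ and gives $\|D_n\|_1\le C\|\varphi_n\|_*$.

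Your Step~1 computation gives $\|W_L(e^{\varphi_n})e^{-Q}-\operatorname{Id}\|_1\le C\|\varphi_n\|_*^2$.

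Since $\frac{\mathrm d}{\mathrm dt}\bigl(e^{tP}e^{tQ}e^{-t(P+Q)}\bigr)=e^{tP}[P,e^{tQ}]e^{-t(P+Q)}$, one also gets $\|e^{P}e^{Q}e^{-(P+Q)}-\operatorname{Id}\|_1\le C\|[P,Q]\|_1\le C\|\psi_n\|_*\|\varphi_n\|_*$.

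In all three bounds $C$ depends only on $\|\psi\|_\infty$ and $\sup_n(\|\psi_n\|_\infty+\|\psi_n\|_*)$. Combined with $|\det(\operatorname{Id}+A)-\det(\operatorname{Id}+B)|\le\|A-B\|_1e^{1+\|A\|_1+\|B\|_1}$, this shows that $\det\bigl(W_L(e^{\psi})e^{-W_L(\psi)}\bigr)-\det\bigl(W_L(e^{\psi_n})e^{-W_L(\psi_n)}\bigr)\to0$ uniformly in $L$. That is what Step~3 needs.

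\textbf{Step 2: the splitting.} Step~2 asserts, rather than proves, that the end corrections decouple, and this is the substance of the theorem. The splitting also cannot be run with separate Fredholm determinants. For $\check\psi\in C_c^\infty$, integration by parts gives $\psi_\pm(\xi)=\pm\check\psi(0)/(i\xi)+O(\xi^{-2})$. Here $\check\psi(0)=(2\pi)^{-1}\int_{\mathbb R}\psi$ is generically nonzero; it is $\Lambda_a(\theta)$ in the application. So $\psi_\pm\notin L^1$, and $W_L(\psi_\pm)$ is not trace class, since its kernel jumps by $\check\psi(0)$ across the diagonal. Hence $W_L(e^{\psi_\pm})-\operatorname{Id}$ is not trace class either.

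\textbf{Step 2: a fix.} The finite-$L$ step can instead be made exact. Because $e^{\psi_\pm}-1$ has inverse Fourier transform supported in $\pm[0,\infty)$, one has $W_L(e^{\psi_\pm})=\exp(W_L(\psi_\pm))$. Moreover,
\[
W_L(e^{\psi_-}e^{\psi_+})=W_L(e^{\psi_-})W_L(e^{\psi_+})+K_L,
\]
where $K_L$ is a single Hankel-type product localized at $x=L$; the defect at $x=0$ vanishes for this order of the factors. With $A:=W_L(\psi_-)$ and $B:=W_L(\psi_+)$,
\[
\det\bigl(W_L(e^{\psi})e^{-W_L(\psi)}\bigr)
=\det\bigl(\operatorname{Id}+K_Le^{-B}e^{-A}\bigr)\,\det\bigl(e^{A}e^{B}e^{-(A+B)}\bigr).
\]
The last factor equals $\exp(\tfrac12\operatorname{tr}[A,B])=1$, because $A$ and $B$ are Hilbert--Schmidt on $(0,L)$. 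Once the end $x=L$ is moved to the origin, the first factor converges in trace norm to the half-line determinant $\det\bigl(W(\tilde\sigma^{-1})W(\tilde\sigma)\bigr)$, where $\tilde\sigma(\xi):=\sigma(-\xi)$. Your commutator computation, applied to $-\tilde\psi$, evaluates exactly this quantity. This finite-$L$ identity is the missing link between your half-line formula and the $L\to\infty$ limit in the theorem.
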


\begin{proposition}
\label{prop:canonical-strong-szego}
There exists a holomorphic function
\(
\mathcal{C}^{\operatorname{WH}}_a:S_{\pi}\to\mathbb{C}
\)
such that, as \(L\to\infty\),
\[
\log\det\left(\operatorname{Id}+(e^{\theta}-1)\widetilde{W}_{a,L}\right)
=
L\Lambda_a(\theta)+\mathcal{C}^{\operatorname{WH}}_a(\theta)+o(1)
\]
locally uniformly for \(\theta\in S_{\pi}\).
\end{proposition}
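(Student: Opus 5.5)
The plan is to reduce everything to the Wiener--Hopf side and then apply Widom's theorem together with a Vitali--Montel argument. By Lemma~\ref{lem:wh-equivalence}, for every \(\theta\in S_\pi\) we have
\[
\det\left(\operatorname{Id}+(e^\theta-1)\widetilde W_{a,L}\right)=\det W_L(\sigma_\theta),
\qquad \sigma_\theta=1+(e^\theta-1)r_a=e^{\psi_\theta}.
\]
Fix \(0<\theta_*<\pi\). The symbol \(\psi_\theta\) is bounded, lies in \(L^1(\mathbb R)\) since \(|\psi_\theta|\le C_Kr_a\), and is a determination of \(\log\sigma_\theta\). Lemma~\ref{lem:log-symbol-sobolev}(ii) and Plancherel give \(\int_{\mathbb R}|x|\,|\check\psi_\theta(x)|^2\,\mathrm dx<\infty\). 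The convention for \(\tau_\theta\) there is exactly \(\check\psi_\theta\), so Theorem~\ref{thm:widom-wh} yields, for each fixed \(\theta\in S_{\theta_*}\),
\[
D_{a,L}(\theta)\to\exp\bigl(\mathcal C^{\operatorname{WH}}_{a,\theta_*}(\theta)\bigr),\qquad L\to\infty .
\]
Here we also use the identity \(\frac{L}{2\pi}\int\psi_\theta=L\Lambda_a(\theta)\), recorded in the proof of Lemma~\ref{lem:canonical-normalized-bounded}.

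Next we upgrade pointwise convergence to locally uniform convergence. Lemma~\ref{lem:canonical-normalized-bounded} shows that the \(D_{a,L}\) are holomorphic on \(S_{\theta_*}\) and locally uniformly bounded in \(L\ge1\). Vitali's theorem (Montel plus pointwise convergence on all of \(S_{\theta_*}\)) then gives \(D_{a,L}\to e^{\mathcal C^{\operatorname{WH}}_{a,\theta_*}}\) locally uniformly on \(S_{\theta_*}\).

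The definition of \(\mathcal C^{\operatorname{WH}}_{a,\theta_*}(\theta)\) does not depend on \(\theta_*\), since \(\psi_\theta\) does not. We may therefore set \(\mathcal C^{\operatorname{WH}}_a:=\mathcal C^{\operatorname{WH}}_{a,\theta_*}\) on \(S_{\theta_*}\) and let \(\theta_*\uparrow\pi\). This gives a holomorphic function on \(S_\pi\), and every compact \(K\subset S_\pi\) lies in some \(S_{\theta_*}\).

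The remaining step is to pass from the determinant to its logarithm. The limit \(e^{\mathcal C^{\operatorname{WH}}_a}\) never vanishes. On a compact \(K\), uniform convergence therefore gives \(|D_{a,L}|\ge c_K>0\) for \(L\) large, and \(\log D_{a,L}\to\mathcal C^{\operatorname{WH}}_a\) uniformly on \(K\), with the branch fixed by continuity from \(\theta=0\). At \(\theta=0\) we have \(D_{a,L}(0)=1\) and \(\mathcal C^{\operatorname{WH}}_a(0)=0\).

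The main obstacle is this choice of logarithm. We must interpret \(\log\det(\operatorname{Id}+(e^\theta-1)\widetilde W_{a,L})\) as the branch obtained by continuation from \(\theta=0\), which is the same convention used for \(\Lambda_a\). This requires the determinant to be zero-free along the continuation. The determinant is
\[
\prod_j\bigl(1+(e^\theta-1)\mu_j\bigr),\qquad \mu_j\in[0,\rho_a]\subset[0,1),
\]
by Lemma~\ref{lem:wh-equivalence}, and the argument of Lemma~\ref{lem:log-symbol-sobolev}(i) shows that each factor avoids \((-\infty,0]\) for \(\theta\in S_\pi\).

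We can therefore define the logarithm canonically as \(\sum_j\log(1+(e^\theta-1)\mu_j)\), using principal logarithms; this series converges because \(\sum\mu_j<\infty\). It is holomorphic on the simply connected strip, so it agrees with \(L\Lambda_a(\theta)+\log D_{a,L}(\theta)\) up to a constant multiple of \(2\pi i\). Both sides vanish at \(\theta=0\), so the constant is zero. This gives
\[
\log\det\left(\operatorname{Id}+(e^\theta-1)\widetilde W_{a,L}\right)=L\Lambda_a(\theta)+\mathcal C^{\operatorname{WH}}_a(\theta)+o(1)
\]
uniformly on \(K\).
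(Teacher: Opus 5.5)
Your proposal is correct and follows essentially the paper's proof: Lemma~\ref{lem:wh-equivalence} together with Widom's theorem gives pointwise convergence of $D_{a,L}$, the uniform bound of Lemma~\ref{lem:canonical-normalized-bounded} with Vitali's theorem upgrades this to locally uniform convergence, and the logarithm is normalized at $\theta=0$ using the zero-free eigenvalue product. The differences are minor. You invoke Widom's theorem at complex $\theta$, which the stated version allows, whereas the paper uses only real $\theta$ (where $\sigma_\theta>0$) and then the identity theorem; and for the logarithm step you should run your nonvanishing-limit argument on a connected compact set containing both $0$ and $K$ (such as the star-shaped hull $\Gamma_K$ the paper uses), so that the branch normalized at $0$ is the one that converges.
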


\begin{proof}
Fix \(0<\theta_*<\pi\). By Lemma~\ref{lem:log-symbol-sobolev},
\[
        \mathcal C^{\operatorname{WH}}_{a,\theta_*}(\theta)
        :=
        \int_0^\infty x\tau_\theta(x)\tau_\theta(-x)\,\mathrm dx
\]
is holomorphic on \(S_{\theta_*}\). For real
\(\theta\in(-\theta_*,\theta_*)\), the symbol
\(\sigma_\theta=e^{\psi_\theta}\) is positive and bounded away from zero,
\(\psi_\theta\in W^{1,2}(\mathbb R)\cap L^1(\mathbb R)\), and
\(\int_{\mathbb R}|x|\,|\tau_\theta(x)|^2\,\mathrm dx<\infty\). Since
\((2\pi)^{-1}\int_{\mathbb R}\psi_\theta=\Lambda_a(\theta)\),
Theorem~\ref{thm:widom-wh} and Lemma~\ref{lem:wh-equivalence} give
\[
        D_{a,L}(\theta)
        \to
        \exp\left(\mathcal C^{\operatorname{WH}}_{a,\theta_*}(\theta)\right),
        \qquad \theta\in(-\theta_*,\theta_*).
\]

By Lemma~\ref{lem:canonical-normalized-bounded}, the functions \(D_{a,L}\) are
holomorphic on \(S_{\theta_*}\) and locally bounded there, uniformly in \(L\).
Vitali's theorem, applied to arbitrary sequences \(L_n\to\infty\), and the
identity theorem therefore yield compact-uniform convergence on \(S_{\theta_*}\).

It remains to pass to logarithms. By Lemma~\ref{lem:wh-equivalence}, the
eigenvalues of \(\widetilde W_{a,L}\) lie in \([0,\rho_a]\), with
\(0<\rho_a<1\). Hence \(1+(e^\theta-1)\lambda\ne0\) for
\(\theta\in S_{\theta_*}\) and \(0\le\lambda\le\rho_a\), so the Fredholm
determinant is zero-free on \(S_{\theta_*}\). Let
\[
        G_{a,L}(\theta)
        :=
        \log\det\left(\operatorname{Id}+(e^\theta-1)\widetilde W_{a,L}\right)
        -
        L\Lambda_a(\theta)
\]
be the holomorphic branch normalized by \(G_{a,L}(0)=0\). Then
\(e^{G_{a,L}}=D_{a,L}\).

Let \(K\Subset S_{\theta_*}\), set
\(\Gamma_K:=\{t\theta:\theta\in K,\ 0\le t\le1\}\), and choose
\(U\) with \(\Gamma_K\Subset U\Subset S_{\theta_*}\). Since
\(D_{a,L}\to e^{\mathcal C^{\operatorname{WH}}_{a,\theta_*}}\) uniformly on
\(\overline U\) and the limit is zero-free, Cauchy's formula gives
\[ 
G_{a,L}' = \frac{D_{a,L}'}{D_{a,L}} \longrightarrow \left(\mathcal{C}^{\operatorname{WH}}_{a,\theta_*}\right)' 
\]
uniformly on \(\Gamma_K\).
As \(G_{a,L}(0)=0\) and
\(\mathcal C^{\operatorname{WH}}_{a,\theta_*}(0)=0\), integration along the
segments \(t\theta\) yields \(G_{a,L}\to
\mathcal C^{\operatorname{WH}}_{a,\theta_*}\) uniformly on \(K\).

If \(0<\theta_*<\theta_{**}<\pi\), the two local limits
\(\mathcal C^{\operatorname{WH}}_{a,\theta_*}\) and
\(\mathcal C^{\operatorname{WH}}_{a,\theta_{**}}|_{S_{\theta_*}}\) are
compact-uniform limits of the same functions \(G_{a,L}\), hence agree on
\(S_{\theta_*}\). The local functions therefore patch to a holomorphic function
\(\mathcal C^{\operatorname{WH}}_a\) on \(S_\pi\).
\end{proof}

\subsection{The holomorphic discrepancy determinant}

We now compare the original determinant with the canonical determinant. For
\(\theta\in S_\pi\), let
\[
F_{a,L}(\theta)
:=
\log\det\left(\operatorname{Id}+(e^\theta-1)T_{a,L}\right)
-
\log\det\left(\operatorname{Id}+(e^\theta-1)\widetilde W_{a,L}\right),
\]
where both logarithms are the holomorphic branches on \(S_\pi\) that vanish at
\(\theta=0\).

\begin{lemma}
\label{lem:discrepancy-normal-family}
The functions \(F_{a,L}\) are holomorphic on \(S_{\pi}\), satisfy \(F_{a,L}(0)=0\), and, for every compact \(K\subset S_{\pi}\),
\[
\sup_{L\geq1}\sup_{\theta\in K}|F_{a,L}(\theta)|<\infty.
\]
Consequently, \(\{F_{a,L}\}_{L\geq1}\) is a normal family on \(S_{\pi}\).
\end{lemma}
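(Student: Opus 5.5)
The idea is to write \(F_{a,L}\) as the logarithm of a single relative Fredholm determinant and to bound it through the uniform trace-class discrepancy of Proposition~\ref{prop:trace-class-discrepancy}.

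\textbf{Zero-freeness and holomorphy.} Set \(t=e^\theta-1\). Both \(T_{a,L}\) and \(\widetilde W_{a,L}\) have spectrum in \([0,1]\) by Lemmas~\ref{lem:reduced-operator} and~\ref{lem:wh-equivalence}. For \(\theta\in S_\pi\) and \(\lambda\in[0,1]\) we have \(1+t\lambda=(1-\lambda)+\lambda e^\theta\ne0\). This is the same argument as in Proposition~\ref{prop:lambda-holomorphic-extension}; at \(\lambda=1\) it uses \(e^\theta\neq0\).

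Hence both determinants are entire in \(\theta\) and zero-free on the simply connected strip \(S_\pi\). The normalized logarithms therefore exist, \(F_{a,L}\) is holomorphic, and \(F_{a,L}(0)=0\).

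\textbf{Relative determinant.} Write \(A_\theta:=\operatorname{Id}+t\widetilde W_{a,L}\). Since \(\widetilde W_{a,L}\) is normal with spectrum in \([0,\rho_a]\), we have
\[
\|A_\theta^{-1}\|=\sup_{0\le\lambda\le\rho_a}|1+t\lambda|^{-1}.
\]
By Lemma~\ref{lem:log-symbol-sobolev}(i), applied with \(u\in[0,\rho_a]\), this is bounded by \(C_K\) for \(\theta\in K\), uniformly in \(L\). Then
\[
\operatorname{Id}+tT_{a,L}=A_\theta\bigl(\operatorname{Id}+tA_\theta^{-1}R_{a,L}\bigr),\qquad R_{a,L}=T_{a,L}-\widetilde W_{a,L},
\]
and multiplicativity gives
\[
e^{F_{a,L}(\theta)}=\det\bigl(\operatorname{Id}+tA_\theta^{-1}R_{a,L}\bigr).
\]
By Proposition~\ref{prop:trace-class-discrepancy},
\[
\|tA_\theta^{-1}R_{a,L}\|_1\le |t|\,C_K\sup_{L}\|R_{a,L}\|_1=:M_K.
\]

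\textbf{Main obstacle: bounding the logarithm, not just its exponential.} The estimate \(|\det(\operatorname{Id}+B)|\le e^{\|B\|_1}\) only bounds \(\operatorname{Re}F_{a,L}\) from above. I plan to use a homotopy. Set
\[
\Phi(s):=\log\det\bigl(\operatorname{Id}+tA_\theta^{-1}sR_{a,L}\bigr),\qquad s\in[0,1].
\]
Equivalently, \(\Phi(s)=\log\det(\operatorname{Id}+t(\widetilde W_{a,L}+sR_{a,L}))-\log\det A_\theta\).

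The operator \(\widetilde W_{a,L}+sR_{a,L}=(1-s)\widetilde W_{a,L}+sT_{a,L}\) is a positive contraction. So the same spectral argument, applied to the normal operator \((1-s)\widetilde W_{a,L}+sT_{a,L}\), shows that the determinant is nonzero for all \(s\in[0,1]\). Its resolvent satisfies
\[
\|(\operatorname{Id}+t((1-s)\widetilde W_{a,L}+sT_{a,L}))^{-1}\|\le \sup_{\lambda\in[0,1]}|1+t\lambda|^{-1}\le C_K',
\]
uniformly on \(K\). The bound at \(\lambda=1\) holds because \(|e^\theta|\) is bounded below on \(K\).

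Then
\[
\Phi'(s)=\operatorname{tr}\bigl[(\operatorname{Id}+t((1-s)\widetilde W_{a,L}+sT_{a,L}))^{-1}tR_{a,L}\bigr],
\]
so \(|\Phi'(s)|\le |t|\,C_K'\sup_L\|R_{a,L}\|_1\). Integrating from \(s=0\) to \(s=1\) gives \(|F_{a,L}(\theta)|\le C\) uniformly for \(L\ge1\) and \(\theta\in K\).

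This determination of the logarithm agrees with the normalized branch. Both are continuous in \((s,\theta)\) on the connected set \([0,1]\times S_\pi\) and vanish at \(\theta=0\), where \(t=0\).

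\textbf{Normality.} Local uniform boundedness of holomorphic functions gives normality by Montel's theorem.
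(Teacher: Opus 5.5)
Your proposal is correct and is essentially the paper's proof: zero-freeness from spectra in \([0,1]\), the homotopy \((1-s)\widetilde W_{a,L}+sT_{a,L}\) through positive contractions with the uniform resolvent bound \(\sup_{\theta\in K,\,0\le\lambda\le1}|1+(e^\theta-1)\lambda|^{-1}<\infty\), the logarithmic-derivative trace formula integrated in \(s\), the uniform bound of Proposition~\ref{prop:trace-class-discrepancy}, and Montel's theorem. The intermediate relative-determinant factorization through \(A_\theta^{-1}\) is harmless but not needed, since your final estimate uses only the homotopy.
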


\begin{proof}
By Lemmas~\ref{lem:reduced-operator} and~\ref{lem:wh-equivalence},
\(T_{a,L}\) and \(\widetilde W_{a,L}\) are positive trace-class contractions.
Thus each spectral factor has the form \(1+(e^\theta-1)\lambda\), with
\(0\le\lambda\le1\), and is nonzero on \(S_\pi\). Since the eigenvalues are
summable, the Fredholm products converge absolutely and the determinants are
zero-free on \(S_\pi\). As \(S_\pi\) is simply connected and both determinants
equal \(1\) at \(\theta=0\), the normalized logarithms exist and are
holomorphic.

Fix compact \(K\subset S_\pi\), and set
\(E_{L,s}:=(1-s)\widetilde W_{a,L}+sT_{a,L}\), \(0\le s\le1\). Then
\(E_{L,s}\) is a positive trace-class contraction. Hence
\[
        m_K:=
        \inf_{\theta\in K,\ 0\le\lambda\le1}
        |1+(e^\theta-1)\lambda|
        >0
\]
and
\[
        \left\|
        \left(\operatorname{Id}+(e^\theta-1)E_{L,s}\right)^{-1}
        \right\|
        \le m_K^{-1},
        \qquad \theta\in K,\quad 0\le s\le1.
\]
Let \(H_{L,s}(\theta):=\log\det(\operatorname{Id}+(e^\theta-1)E_{L,s})\) be
the branch normalized by \(H_{L,s}(0)=0\). Since
\(s\mapsto E_{L,s}\) is affine in trace norm and the determinants stay
nonzero, the logarithmic differential formula for Fredholm determinants; see, e.g., \cite[Corollary~5.2]{Simon2005}, with the endpoint branches matched by normalization at \(\theta=0\), gives
\[
        F_{a,L}(\theta)
        =
        \int_0^1
        \operatorname{tr}
        \left[
        \left(\operatorname{Id}+(e^\theta-1)E_{L,s}\right)^{-1}
        (e^\theta-1)(T_{a,L}-\widetilde W_{a,L})
        \right]
        \,\mathrm ds .
\]
Therefore, for \(\theta\in K\),
\[
        |F_{a,L}(\theta)|
        \le
        m_K^{-1}\sup_{\zeta\in K}|e^\zeta-1|\,
        \|T_{a,L}-\widetilde W_{a,L}\|_1 .
\]
The uniform bound follows from Proposition~\ref{prop:trace-class-discrepancy},
and Montel's theorem gives normality.
\end{proof}

The normal-family argument identifies the discrepancy limit through its Taylor
coefficients at \(0\). We first record a determinant derivative formula.

\begin{lemma}
\label{lem:stirling-trace-derivatives}
Let \(A\) be a trace-class operator. For \(|\theta|\) sufficiently small, define
\[
        H_A(\theta):=\log\det\left(\operatorname{Id}+(e^\theta-1)A\right),
\]
where the logarithm is the holomorphic branch near \(0\) normalized by
\(H_A(0)=0\). Then, for every integer \(n\ge1\),
\begin{equation}
\label{eq:stirling-logdet-derivative-formula}
        H_A^{(n)}(0)
        =
        \sum_{m=1}^n
        (-1)^{m+1}(m-1)!S(n,m)\operatorname{tr}(A^m),
\end{equation}
where \(S(n,m)\) is the Stirling number of the second kind. Consequently,
\begin{equation}
\label{eq:stirling-trace-derivative-formula}
        F_{a,L}^{(n)}(0)
        =
        \sum_{m=1}^n
        (-1)^{m+1}(m-1)!S(n,m)
        \left(
        \operatorname{tr}(T_{a,L}^m)-\operatorname{tr}(\widetilde W_{a,L}^m)
        \right).
\end{equation}
\end{lemma}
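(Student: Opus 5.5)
The plan is to expand the logarithm of the Fredholm determinant as a trace power series in the small variable $t=e^\theta-1$. Then we re-expand in $\theta$, using the exponential generating function of the Stirling numbers of the second kind.

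First I fix the setting. Since $A$ is trace class, for $|t|\,\|A\|<1$ the standard identity
\[
\log\det(\operatorname{Id}+tA)=\sum_{m\ge1}\frac{(-1)^{m+1}}{m}t^m\operatorname{tr}(A^m)
\]
holds. It follows from Lidskii's theorem and $|\operatorname{tr}(A^m)|\le\|A\|^{m-1}\|A\|_1$, so the series converges absolutely and is holomorphic in $t$. It vanishes at $t=0$, so it is the normalized branch. For $|\theta|$ small we have $|e^\theta-1|\,\|A\|<1$, and substituting gives
\[
H_A(\theta)=\sum_{m\ge1}\frac{(-1)^{m+1}}{m}(e^\theta-1)^m\operatorname{tr}(A^m).
\]
This is a locally uniformly convergent series of holomorphic functions of $\theta$, so it may be differentiated termwise.

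Next I use the classical identity
\[
\frac{(e^\theta-1)^m}{m!}=\sum_{n\ge m}S(n,m)\frac{\theta^n}{n!}.
\]
It gives $\frac{\mathrm d^n}{\mathrm d\theta^n}(e^\theta-1)^m\big|_{\theta=0}=m!\,S(n,m)$, which vanishes for $m>n$. Hence
\[
H_A^{(n)}(0)=\sum_{m=1}^n\frac{(-1)^{m+1}}{m}\,m!\,S(n,m)\operatorname{tr}(A^m),
\]
which is \eqref{eq:stirling-logdet-derivative-formula}.

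For \eqref{eq:stirling-trace-derivative-formula}, apply the first formula to $A=T_{a,L}$ and to $A=\widetilde W_{a,L}$; both are trace class by Lemmas~\ref{lem:reduced-operator} and~\ref{lem:wh-equivalence}. Near $0$, the branches in the definition of $F_{a,L}$ (holomorphic on $S_\pi$, vanishing at $0$) coincide with the local branches $H_A$, by uniqueness of analytic continuation from $\theta=0$. So $F_{a,L}=H_{T_{a,L}}-H_{\widetilde W_{a,L}}$ near $0$, and subtracting the two formulas gives the result.

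The only mildly delicate point is justifying termwise differentiation and the branch identification. Both follow from absolute uniform convergence on a small disk and from normalization at $\theta=0$.
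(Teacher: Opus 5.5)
Your proposal is correct and follows essentially the same route as the paper: expand $\log\det(\operatorname{Id}+zA)=\sum_{m\ge1}\frac{(-1)^{m+1}}{m}\operatorname{tr}(A^m)z^m$ for $|z|\,\|A\|<1$, substitute $z=e^\theta-1$, use $(e^\theta-1)^m=m!\sum_{k\ge m}S(k,m)\theta^k/k!$, and subtract the resulting formulas for $T_{a,L}$ and $\widetilde W_{a,L}$. Your added justifications (the bound $|\operatorname{tr}(A^m)|\le\|A\|^{m-1}\|A\|_1$ and the branch identification by normalization at $\theta=0$) fill in details the paper leaves implicit.
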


\begin{proof}
For \(|z|\|A\|<1\), the Fredholm logarithm has the trace-norm expansion
\[
        \log\det(\operatorname{Id}+zA)
        =
        \sum_{m=1}^\infty
        \frac{(-1)^{m+1}}{m}\operatorname{tr}(A^m)z^m .
\]
Substituting \(z=e^\theta-1\), using local uniform convergence near
\(\theta=0\), and the identity
\[
        (e^\theta-1)^m
        =
        m!\sum_{k=m}^\infty S(k,m)\frac{\theta^k}{k!},
\]
only terms \(m\le n\) contribute to \(H_A^{(n)}(0)\), giving
\eqref{eq:stirling-logdet-derivative-formula}. Applying this to
\(A=T_{a,L}\) and \(A=\widetilde W_{a,L}\), and subtracting the two identities,
gives \eqref{eq:stirling-trace-derivative-formula}.
\end{proof}

\subsection{Trace-defect constants and the holomorphic limit}

For \(m\ge2\) and \(u=(u_1,\ldots,u_{m-1})\in\mathbb R^{m-1}\), set
\(d_1(u):=-(u_1+\cdots+u_{m-1})\) and \(d_j(u):=u_{j-1}\), \(2\le j\le m\).
Also define
\[
        \ell_m(u)
        :=
        \max_{0\le k\le m-1}(u_1+\cdots+u_k)
        -
        \min_{0\le k\le m-1}(u_1+\cdots+u_k),
\]
where the empty sum is understood as \(0\). With \(y_0:=y_m\), define
\[
        G_{T,m}(u)
        :=
        \frac{1}{\pi^m}
        \int_{[1,a]^m}
        \prod_{j=1}^m
        \frac{1}{(y_{j-1}+y_j+i d_j(u))^2}
        \,\mathrm dy_1\cdots\mathrm dy_m
\]
and
\[
        G_{W,m}(u)
        :=
        \frac{1}{\pi^m}
        \int_{[1,a]^m}
        \prod_{j=1}^m
        \frac{1}{(2z_j+i d_j(u))^2}
        \,\mathrm dz_1\cdots\mathrm dz_m.
\]
Write \(D_m(u):=G_{T,m}(u)-G_{W,m}(u)\).

\begin{proposition}
\label{prop:trace-defect-limit}
For every \(m\ge2\), one has \(\ell_mD_m\in L^1(\mathbb R^{m-1})\), and
\[
        \lim_{L\to\infty}
        \left(
        \operatorname{tr}(T_{a,L}^m)-\operatorname{tr}(\widetilde W_{a,L}^m)
        \right)
        =
        \Gamma_a^{(m)},
        \qquad
        \Gamma_a^{(m)}
        :=
        -\int_{\mathbb R^{m-1}}\ell_m(u)D_m(u)\,\mathrm du .
\]
For \(m=1\), the trace discrepancy vanishes identically, and
\(\Gamma_a^{(1)}:=0\).
\end{proposition}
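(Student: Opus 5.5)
The plan is to compute both traces in the \emph{spatial} variable \(x\in[0,L]\) rather than in the frequency variable \(\xi\). In that variable each trace becomes an integral of a translation-invariant cyclic kernel over \([0,L]^m\). The \(O(1)\) defect then appears as a boundary (edge) correction governed by the span \(\ell_m(u)\).

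\textbf{Step 1: spatial trace formulas.} For \(T_{a,L}\) I use \(\operatorname{tr}(T_{a,L}^m)=\operatorname{tr}(K_{B_{a,L}}^m)\), which holds because \(T_{a,L}=\mathsf A\mathsf A^*\) and \(K_{B_{a,L}}=\mathsf A^*\mathsf A\) (Lemmas~\ref{lem:hilbert-schmidt-factorization} and~\ref{lem:reduced-operator}). Writing \(z_j=x_j+iy_j\), one has \(z_{j-1}-\overline{z_j}=(x_{j-1}-x_j)+i(y_{j-1}+y_j)\), so
\[
K_{\mathbb H}(z_{j-1},z_j)=\frac{1}{\pi\,(y_{j-1}+y_j-i(x_{j-1}-x_j))^2}.
\]
Integrating out the heights gives
\[
\operatorname{tr}(T_{a,L}^m)=\int_{[0,L]^m}G_{T,m}(u(x))\,\mathrm dx,
\]
where \(u\) is the vector of consecutive differences, with the index and sign convention chosen to match \(d_j\) (up to a harmless reflection \(u\mapsto-u\)).

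For \(\widetilde W_{a,L}\), Lemma~\ref{lem:wh-equivalence} gives \(\operatorname{tr}(\widetilde W_{a,L}^m)=\operatorname{tr}(W_L(r_a)^m)\). The kernel of \(W_L(r_a)\) is \(\check r_a(x'-x)\), and a direct computation gives
\[
\check r_a(x)=\frac1{2\pi}\int_0^\infty\!\int_1^a 2\xi e^{-2\xi z+ix\xi}\,\mathrm dz\,\mathrm d\xi=\frac1\pi\int_1^a\frac{\mathrm dz}{(2z-ix)^2}.
\]
Hence \(\operatorname{tr}(W_L(r_a)^m)=\int_{[0,L]^m}G_{W,m}(u(x))\,\mathrm dx\). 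For \(m=1\) both traces equal \(L\lambda_a\), so the defect vanishes, as claimed.

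\textbf{Step 2: reduction to the span.} Change variables to \(x_1\) and the differences \(u\). For fixed \(u\), the set of admissible \(x_1\) is an interval of length \((L-\ell_m(u))_+\), because all partial-sum positions must lie in \([0,L]\). Therefore
\[
\operatorname{tr}(T_{a,L}^m)-\operatorname{tr}(\widetilde W_{a,L}^m)=\int_{\mathbb R^{m-1}}(L-\ell_m(u))_+\,D_m(u)\,\mathrm du,
\]
valid once \(G_{T,m},G_{W,m}\in L^1\), which Step 4 supplies.

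I then show \(\int G_{T,m}=\int G_{W,m}\). Integrating over \(u\) is a zero-frequency evaluation of the convolution structure. Going back to the \(\xi\)-representation, the \(u\)-integration forces all \(\xi_j\) to coincide. Both integrals then equal \((2\pi)^{-1}\int_0^\infty r_a(\xi)^m\,\mathrm d\xi\) exactly; this is consistent with Proposition~\ref{prop:trace-moment-limit}, and the computation of \(U_m\) there yields the same identity. Consequently the defect equals
\[
-\int_{\mathbb R^{m-1}}\min\{\ell_m(u),L\}\,D_m(u)\,\mathrm du,
\]
and dominated convergence gives the limit \(\Gamma_a^{(m)}\) as soon as \(\ell_mD_m\in L^1\).

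\textbf{Step 3: justifying the zero-frequency identity.} For Step 2 I need the precise evaluation \(\int G_{T,m}=\int G_{W,m}\). I would obtain it by inserting a Gaussian regularization \(e^{-\varepsilon|u|^2}\), applying Fubini in \((\xi,y)\), and letting \(\varepsilon\to0\), exactly as in the computation of \(U_m\).

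\textbf{Step 4: integrability (main obstacle).} This is the step I expect to be hardest, and I would bound \(G_{T,m}\) and \(G_{W,m}\) separately, without using cancellation. Each factor satisfies \(|(s+id)^{-2}|\le(4+d^2)^{-1}\) for \(s\ge2\). Since \(\ell_m(u)\le\sum_j|d_j(u)|\), it suffices to bound
\[
\int_{\mathbb R^{m-1}}|d_k(u)|\prod_{j=1}^m(4+d_j(u)^2)^{-1}\,\mathrm du
\]
for each \(k\), where the \(d_j\) are linear forms subject to \(\sum_j d_j=0\). Using \(|d_k|(4+d_k^2)^{-1}\le C\varphi(d_k)\), this reduces to a cyclic convolution of \(m-1\) copies of \(\varphi^2\)-type functions with one copy of \(\varphi\), evaluated at \(0\). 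This is finite by Lemma~\ref{lem:convolution-bound}, since \(\varphi*\varphi^2\) is bounded and integrable. Care is needed only for \(m=2\), where the integrand reduces to \(\varphi(u)(4+u^2)^{-1}\), which is clearly integrable.
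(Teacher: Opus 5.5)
Your proof is correct. Steps 1, 2 and 4 follow the paper's proof closely. The paper also reduces both traces to the overlap representation $\operatorname{tr}(T_{a,L}^m)=\int_{\mathbb R^{m-1}}(L-\ell_m(u))_+\,G_{T,m}(u)\,\mathrm du$, and similarly for $\widetilde W_{a,L}$. It gets the spatial kernels by expanding $X_L$ and $H_a$ inside the cyclic $\xi$-integral and integrating out the $\xi_j$; you instead pass through $K_{B_{a,L}}$ and $W_L(r_a)$, but the resulting integrals are the same. The paper proves $\ell_mD_m\in L^1$ with the same factor-by-factor bound $C_a(1+d_j^2)^{-1}$, without using cancellation.

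The real difference is how you show that the linear term $\Delta_a^{(m)}=\int D_m$ vanishes.
\begin{itemize}
\item The paper argues indirectly. It uses the canonical strong Szeg\H{o} asymptotics (Proposition~\ref{prop:canonical-strong-szego}, i.e.\ Widom's theorem) and Cauchy's formula to get $L^{-1}\operatorname{tr}(\widetilde W_{a,L}^m)\to(2\pi)^{-1}\int_0^\infty r_a^m$. It then compares with Proposition~\ref{prop:trace-moment-limit} and divides the overlap expansion by $L$.
\item You evaluate $\int G_{T,m}$ and $\int G_{W,m}$ directly by the Gaussian-regularized zero-frequency computation. This is the direct check the paper only mentions in the remark after the proof.
\end{itemize}
Your route is more elementary and self-contained. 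It needs no Wiener--Hopf machinery for this proposition, and it produces the common value $\frac{m!}{4\pi}\int_{[1,a]^m}(y_1+\cdots+y_m)^{-m-1}\,\mathrm dy$ explicitly. The paper's route avoids any computation by reusing asymptotics it needs anyway for the strong Szeg\H{o} theorem.

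One small correction in Step 4: $\varphi*\varphi^2$ is bounded but not integrable. Since $\varphi\notin L^1$, it decays like $(1+|s|)^{-1}$. You do not need integrability. Bound $\varphi(d_k)\le 1$ and integrate the other $m-1$ factors in the coordinates $\{d_j\}_{j\ne k}$. These have unit Jacobian, because the only linear relation among the $d_j$ is $\sum_j d_j=0$. This gives the bound $C\|\varphi^2\|_{L^1}^{m-1}$ directly, so Lemma~\ref{lem:convolution-bound} is not needed.
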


\begin{proof}
The bounds \(H_a(\xi+\eta)\le(a-1)e^{-(\xi+\eta)}\) and
\(H_a(2\xi)\le(a-1)e^{-2\xi}\) make the kernels Hilbert--Schmidt and the
cyclic products absolutely integrable. Hence the cyclic trace formula applies
to both \(T_{a,L}\) and \(\widetilde W_{a,L}\). Expanding \(X_L\) and \(H_a\),
applying Fubini, and changing variables \(x=x_1\),
\(u_j=x_{j+1}-x_j\), \(1\le j\le m-1\), gives
\[
        \operatorname{tr}(T_{a,L}^m)
        =
        L\int_{\mathbb R^{m-1}}w_{m,L}(u)G_{T,m}(u)\,\mathrm du,
\]
\[
        \operatorname{tr}(\widetilde W_{a,L}^m)
        =
        L\int_{\mathbb R^{m-1}}w_{m,L}(u)G_{W,m}(u)\,\mathrm du,
\]
where
\[
        w_{m,L}(u)
        :=
        \frac{1}{L}\int_0^L
        \prod_{k=1}^{m-1}
        \mathbf 1_{[0,L]}(x+u_1+\cdots+u_k)\,\mathrm dx .
\]

Uniformly in \(y_j,z_j\in[1,a]\),
\[
        \left|
        \frac{1}{(y_{j-1}+y_j+i d_j(u))^2}
        \right|
        +
        \left|
        \frac{1}{(2z_j+i d_j(u))^2}
        \right|
        \le
        \frac{C_a}{1+d_j(u)^2}.
\]
Thus
\[
        |G_{T,m}(u)|+|G_{W,m}(u)|
        \le
        C_{a,m}
        \frac{1}{1+(u_1+\cdots+u_{m-1})^2}
        \prod_{j=1}^{m-1}\frac{1}{1+u_j^2}.
\]
This gives \(D_m\in L^1\). Since \(\ell_m(u)\le |u_1|+\cdots+|u_{m-1}|\),
the same majorant is \(\ell_m\)-integrable: fixing \(k\) and writing
\(c=\sum_{j\ne k}u_j\), the integral
\[
        \int_{\mathbb R}
        \frac{|v|}
        {(1+v^2)(1+(v+c)^2)}\,\mathrm dv
\]
is uniformly bounded in \(c\), and the remaining variables are integrable.

The admissible set of \(x\)'s has length \(\max\{0,L-\ell_m(u)\}\), so
\(w_{m,L}(u)=\max\{0,1-\ell_m(u)/L\}\). Hence
\(|L(w_{m,L}(u)-1)|\le\ell_m(u)\) and
\(L(w_{m,L}(u)-1)\to-\ell_m(u)\). Subtracting the two overlap representations,
\[
        \operatorname{tr}(T_{a,L}^m)-\operatorname{tr}(\widetilde W_{a,L}^m)
        =
        L\Delta_a^{(m)}
        +
        \int_{\mathbb R^{m-1}}L(w_{m,L}(u)-1)D_m(u)\,\mathrm du,
\]
where \(\Delta_a^{(m)}:=\int_{\mathbb R^{m-1}}D_m(u)\,\mathrm du\). Dominated
convergence gives the second term as \(\Gamma_a^{(m)}+o(1)\).

It remains to show \(\Delta_a^{(m)}=0\). For \(z\) near \(0\), set
\[
g_L(z):=L^{-1}\log\det(\operatorname{Id}+z\widetilde W_{a,L}).
\]
Applying
Proposition~\ref{prop:canonical-strong-szego} with
\(\theta=\log(1+z)\) gives locally uniform convergence
\[
        g_L(z)
        \to
        \frac{1}{2\pi}\int_0^\infty\log(1+zr_a(\xi))\,\mathrm d\xi .
\]
By Cauchy's formula and the local Fredholm logarithm expansion,
\[
        g_L^{(m)}(0)
        =
        (-1)^{m+1}(m-1)!L^{-1}\operatorname{tr}(\widetilde W_{a,L}^m).
\]
Thus \(L^{-1}\operatorname{tr}(\widetilde W_{a,L}^m)\) has the same limit as
\(L^{-1}\operatorname{tr}(T_{a,L}^m)\) in
Proposition~\ref{prop:trace-moment-limit}. Dividing the preceding expansion by
\(L\) and letting \(L\to\infty\) gives \(\Delta_a^{(m)}=0\).

For \(m=1\), Lemma~\ref{lem:reduced-operator} gives
\(\operatorname{tr}(T_{a,L})=L\lambda_a\), while
Lemma~\ref{lem:wh-equivalence} gives
\[
\operatorname{tr}(\widetilde W_{a,L})=\operatorname{tr}(W_L(r_a))
=(L/2\pi)\int_0^\infty r_a(\xi)\,\mathrm d\xi=L\lambda_a.
\]
\end{proof}

\begin{remark}
For \(m\geq2\), the cancellation \(\Delta_a^{(m)}=0\) can also be checked directly from the explicit formulas for \(G_{T,m}\) and \(G_{W,m}\). With the sign convention above, both integrals over \(\mathbb R^{m-1}\) equal
\[
\frac{m!}{4\pi}
\int_{[1,a]^m}
\frac{\mathrm{d}y_1\cdots\mathrm{d}y_m}
{(y_1+\cdots+y_m)^{m+1}}.
\]
The proof above avoids this direct computation: the canonical Wiener--Hopf asymptotic and the trace-moment limit show that the linear-in-\(L\) parts of the two traces agree, forcing \(\Delta_a^{(m)}=0\).
\end{remark}

\begin{proposition}
\label{prop:discrepancy-limit}
There exists a holomorphic function \(\Phi_a:S_{\pi}\to\mathbb{C}\) such that
\(F_{a,L}\to \Phi_a\) locally uniformly on \(S_{\pi}\). It satisfies
\(\Phi_a(0)=0\), and for every integer \(n\geq1\),
\begin{equation}
\label{eq:discrepancy-taylor-coefficients}
\Phi_a^{(n)}(0)
=
\sum_{m=1}^n
(-1)^{m+1}(m-1)!S(n,m)\Gamma_a^{(m)}.
\end{equation}
\end{proposition}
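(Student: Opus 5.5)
The plan is a normal-family argument combined with identification of the limit through its Taylor coefficients at \(0\).

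By Lemma~\ref{lem:discrepancy-normal-family}, the functions \(F_{a,L}\) are holomorphic on \(S_\pi\), vanish at \(0\), and are locally uniformly bounded for \(L\ge1\). Montel's theorem therefore shows that every sequence \(L_n\to\infty\) has a subsequence along which \(F_{a,L_n}\) converges locally uniformly on \(S_\pi\) to some holomorphic \(\Phi\) with \(\Phi(0)=0\).

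Next we pin down the Taylor coefficients of any such subsequential limit. Local uniform convergence of holomorphic functions implies convergence of all derivatives at \(0\), by Cauchy's estimates on a small disk around \(0\). By \eqref{eq:stirling-trace-derivative-formula}, each \(F_{a,L}^{(n)}(0)\) is a fixed finite linear combination, with coefficients \((-1)^{m+1}(m-1)!S(n,m)\) for \(1\le m\le n\), of the trace discrepancies \(\operatorname{tr}(T_{a,L}^m)-\operatorname{tr}(\widetilde W_{a,L}^m)\). By Proposition~\ref{prop:trace-defect-limit}, the \(m\)-th discrepancy converges to \(\Gamma_a^{(m)}\), including the trivial case \(m=1\) where it vanishes identically. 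Hence \(F_{a,L}^{(n)}(0)\) converges as \(L\to\infty\), along the full family and not just the subsequence, to the right-hand side of \eqref{eq:discrepancy-taylor-coefficients}. Every subsequential limit \(\Phi\) therefore has these prescribed derivatives at \(0\).

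Since \(S_\pi\) is connected, the identity theorem shows a holomorphic function on \(S_\pi\) is determined by its Taylor series at \(0\). All subsequential limits thus coincide; call the common limit \(\Phi_a\). The standard subsequence argument then gives convergence of the whole family: if \(F_{a,L}\) failed to converge to \(\Phi_a\) uniformly on some compact set, some sequence \(L_n\to\infty\) would keep a positive distance from \(\Phi_a\) there. Normality would produce a further subsequence converging to a limit, which by the above must be \(\Phi_a\), a contradiction.

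This establishes locally uniform convergence, \(\Phi_a(0)=0\), and \eqref{eq:discrepancy-taylor-coefficients}. The genuine analytic input is already contained in Proposition~\ref{prop:trace-class-discrepancy} (through the uniform bound of Lemma~\ref{lem:discrepancy-normal-family}) and in Proposition~\ref{prop:trace-defect-limit}. The only point needing care here is that convergence of derivatives at \(0\) holds for the full family, which is exactly what Proposition~\ref{prop:trace-defect-limit} provides.
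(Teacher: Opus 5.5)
Your proposal is correct and follows essentially the paper's argument: normality from Lemma~\ref{lem:discrepancy-normal-family}, identification of every subsequential limit through its Taylor coefficients at \(0\) via Lemma~\ref{lem:stirling-trace-derivatives} and Proposition~\ref{prop:trace-defect-limit}, then the identity theorem and the subsequence argument. The only difference is cosmetic. The paper runs the argument on each smaller strip \(S_{\theta_*}\) and glues the limits as \(\theta_*\uparrow\pi\), whereas you work on \(S_\pi\) directly, which is legitimate because Lemma~\ref{lem:discrepancy-normal-family} already gives normality on all of \(S_\pi\).
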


\begin{proof}
Fix \(0<\theta_*<\pi\). By Lemma~\ref{lem:discrepancy-normal-family},
\(\{F_{a,L}\}_{L\ge1}\) is normal on \(S_{\theta_*}\). Let \(L_j\to\infty\),
and pass to a subsequence such that \(F_{a,L_j}\to G\) compact-uniformly on
\(S_{\theta_*}\). Then \(G\) is holomorphic, and Cauchy's formula gives
\(F_{a,L_j}^{(n)}(0)\to G^{(n)}(0)\) for every \(n\ge0\). Since
\(F_{a,L}(0)=0\), also \(G(0)=0\). For \(n\ge1\),
Lemma~\ref{lem:stirling-trace-derivatives} and
Proposition~\ref{prop:trace-defect-limit} yield
\[
        G^{(n)}(0)
        =
        \sum_{m=1}^n
        (-1)^{m+1}(m-1)!S(n,m)\Gamma_a^{(m)}.
\]
Thus all subsequential limits have the same Taylor expansion at \(0\); by the
identity theorem they agree on \(S_{\theta_*}\). Hence the full family
converges compact-uniformly on \(S_{\theta_*}\).

Denote the local limit by \(\Phi_{a,\theta_*}\). If
\(0<\theta_*<\theta_{**}<\pi\), then
\(\Phi_{a,\theta_*}\) and
\(\Phi_{a,\theta_{**}}|_{S_{\theta_*}}\) are limits of the same functions on
\(S_{\theta_*}\), so they agree. The local limits glue to a holomorphic
function \(\Phi_a:S_\pi\to\mathbb C\), and every compact subset of \(S_\pi\) is
contained in some \(S_{\theta_*}\).
\end{proof}

\subsection{Completion of the strong Szeg\H{o} expansion}
\label{proof:strong-szego-reference}

\begin{proof}[Proof of Theorem~\ref{thm:strong-szego-reference}]
Set
\(\mathcal C_a(\theta):=\mathcal C_a^{\operatorname{WH}}(\theta)+\Phi_a(\theta)\),
\(\theta\in S_\pi\). By Propositions~\ref{prop:canonical-strong-szego}
and~\ref{prop:discrepancy-limit}, \(\mathcal C_a\) is holomorphic on \(S_\pi\).

By Lemma~\ref{lem:reduced-operator},
\[
        \mathbb E e^{\theta N_a(L)}
        =
        \det\left(\operatorname{Id}+(e^\theta-1)T_{a,L}\right).
\]
Since \(T_{a,L}\) is a positive trace-class contraction, every spectral factor
has the form \(1+(e^\theta-1)\lambda\), \(0\le\lambda\le1\), and is nonzero on
\(S_\pi\). Thus the Fredholm determinant is zero-free there, and we take the
holomorphic logarithm normalized to vanish at \(\theta=0\).

By definition of \(F_{a,L}\),
\[
        \log\det\left(\operatorname{Id}+(e^\theta-1)T_{a,L}\right)
        =
        \log\det\left(\operatorname{Id}+(e^\theta-1)\widetilde W_{a,L}\right)
        +
        F_{a,L}(\theta).
\]
The canonical expansion from Proposition~\ref{prop:canonical-strong-szego} and
the compact-uniform convergence \(F_{a,L}\to\Phi_a\) from
Proposition~\ref{prop:discrepancy-limit} complete the proof.
\end{proof}

\begin{remark}
The proof gives the structural decomposition
\(\mathcal C_a=\mathcal C_a^{\operatorname{WH}}+\Phi_a\). The first term is
the canonical Wiener--Hopf correction
\[
        \mathcal C_a^{\operatorname{WH}}(\theta)
        =
        \int_0^\infty x\tau_\theta(x)\tau_\theta(-x)\,\mathrm dx,
\]
where 
\[
\tau_\theta(x)
        =
        \frac{1}{2\pi}\int_{\mathbb R}
        \log\left(1+(e^\theta-1)r_a(\xi)\right)e^{ix\xi}\,\mathrm d\xi,
\]
and \(r_a\) is extended by zero to \((-\infty,0]\). The second term is the
locally uniform holomorphic limit
\[
        \Phi_a(\theta)
        =
        \lim_{L\to\infty}
        \left[
        \log\det\left(\operatorname{Id}+(e^\theta-1)T_{a,L}\right)
        -
        \log\det\left(\operatorname{Id}+(e^\theta-1)\widetilde W_{a,L}\right)
        \right],
\]
whose Taylor coefficients are given by
\eqref{eq:discrepancy-taylor-coefficients}. In particular,
\(\Phi_a(0)=0\) and \(\mathcal C_a(0)=0\).
\end{remark}

\subsection{Scaling back to general bounded intervals}
\label{proof:general-interval-scaling}

\begin{proof}[Proof of Corollary~\ref{cor:general-interval-scaling}]
Let \(I=[\alpha,\beta]\subset(0,\infty)\) and \(a=\beta/\alpha>1\). By
Lemma~\ref{lem:count-scaling} and Theorem~\ref{thm:szego-limit-reference},
\(\Lambda_I(\theta)=\alpha^{-1}\Lambda_a(\theta)\) for real \(\theta\). Since
both sides are holomorphic on \(S_\pi\), the identity holds throughout
\(S_\pi\). Also
\[
        J_I(x)
        =
        \sup_{\theta\in\mathbb R}\{\theta x-\Lambda_I(\theta)\}
        =
        \alpha^{-1}J_a(\alpha x).
\]

The LDP transfers under the scaling map. Indeed, with \(M=L/\alpha\),
\[
        \frac{N_I(L)}{L}
        \stackrel{\mathrm d}{=}
        \frac{1}{\alpha}\frac{N_a(M)}{M}.
\]
Applying Theorem~\ref{thm:ldp-reference} to \(\alpha A\) and using
\(L=\alpha M\) gives the LDP on \([0,\infty)\) with speed \(L\) and rate
function \(J_I\). Goodness follows from that of \(J_a\).

Finally, Theorem~\ref{thm:strong-szego-reference} and the same distributional
identity give, locally uniformly for \(\theta\in S_\pi\),
\[
        \log\mathbb E e^{\theta N_I(L)}
        =
        \log\mathbb E e^{\theta N_a(L/\alpha)}
        =
        \frac{L}{\alpha}\Lambda_a(\theta)+\mathcal C_a(\theta)+o(1)
        =
        L\Lambda_I(\theta)+\mathcal C_a(\theta)+o(1).
\]
Hence \(\mathcal C_I(\theta)=\mathcal C_a(\theta)\).
\end{proof}

\begin{remark}
The scaling corollary separates shape from scale. The ratio
\(a=\beta/\alpha\) is the shape parameter, while \(\alpha\) only rescales the
leading objects:
\[
        \Lambda_I(\theta)=\alpha^{-1}\Lambda_a(\theta),
        \qquad
        J_I(x)=\alpha^{-1}J_a(\alpha x).
\]
The strong correction is scale-invariant,
\(\mathcal C_I(\theta)=\mathcal C_a(\theta)\), which is why
\(I_a=[1,a]\) is the natural primary family.
\end{remark}

\section{Analytic and asymptotic properties of the shape family}
\label{sec:shape-family}

The main theorems identify \(\Lambda_a\), \(J_a\), and \(\mathcal C_a\) for
the normalized family \(I_a=[1,a]\). This section records further analytic and
asymptotic properties of these objects.  We first note that the explicit
formula for \(a=2\) from the introduction has a natural extension to integer
shape parameters: although general integer \(a\ge2\) no longer yields such an
elementary closed form, it still admits a dilogarithmic representation.  We
then turn to Taylor coefficients and analyticity of \(\Lambda_a\), parameter
regimes in \(a\) and their effect on \(\Lambda_a\) and \(J_a\), and low-order
structural information for \(\mathcal C_a\).

\begin{example}
\label{ex:integer-shape-explicit}
For integer \(a\ge2\), this representation is
\[
\Lambda_a(\theta)
=
-\frac{1}{4\pi}
\sum_{j=1}^{a}\operatorname{Li}_2(-\omega_j(\theta)),
\qquad \theta\in S_\pi,
\]
where \(\omega_1(\theta),\ldots,\omega_a(\theta)\) are the roots, counted with
multiplicity, of
\(
\omega^a-(e^\theta-1)\omega^{a-1}+(-1)^{a+1}(e^\theta-1)=0.
\)
The expression is understood by analytic continuation from \(\theta=0\).  For
real \(\theta\), the corresponding rate function is parametrized by
\[
J_a(x(\theta))=\theta x(\theta)-\Lambda_a(\theta),
\qquad
x(\theta)=
\frac{e^\theta}{4\pi}
\int_0^1
\frac{1-t^{a-1}}{1+(e^\theta-1)(t-t^a)}\,\mathrm dt .
\]
It follows from the substitution \(t=e^{-2\xi}\), the factorization of
\(1+(e^\theta-1)(t-t^a)\), and the standard dilogarithm primitive.
\end{example}

\subsection{\texorpdfstring{Power integrals of the symbol \(r_a\)}{Power integrals of the symbol r a}}

The powers of \(r_a\) occur in trace expansions, Taylor coefficients, and
cumulant calculations. Recall that
\[\rho_a:=\sup_{\xi>0}r_a(\xi)=(a-1)a^{-a/(a-1)}.\]

\begin{lemma}
\label{lem:moment-integrals}
Let \(a>1\), and for \(m\geq1\) set
\(
        M_m(a):=\int_0^\infty r_a(\xi)^m\,\mathrm d\xi.
\)
Then
\begin{equation}
\label{eq:moment-integrals-beta}
        M_m(a)
        =
        \frac{1}{2(a-1)}
        B\left(\frac{m}{a-1},m+1\right),
\end{equation}
where \(B\) is the Beta function. Equivalently,
\begin{equation}
\label{eq:moment-integrals-binomial}
        M_m(a)
        =
        \frac{1}{2}
        \sum_{j=0}^{m}
        (-1)^j
        \binom{m}{j}
        \frac{1}{m+(a-1)j}.
\end{equation}
Moreover, as \(m\to\infty\),
\begin{equation}
\label{eq:moment-integrals-asymptotic}
        M_m(a)
        \sim
        \sqrt{\frac{\pi}{2a}}\,m^{-1/2}\rho_a^m.
\end{equation}
\end{lemma}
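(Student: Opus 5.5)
The plan has three steps: reduce \(M_m(a)\) to a Beta integral by substitution, expand binomially for the second formula, and apply Laplace's method at the maximizer of \(r_a\) for the asymptotic.

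\emph{Beta formula.} Substitute \(t=e^{-2(a-1)\xi}\), so that \(\mathrm d\xi=-\mathrm dt/(2(a-1)t)\). Then
\[
r_a(\xi)=e^{-2\xi}(1-e^{-2(a-1)\xi})=t^{1/(a-1)}(1-t).
\]
Hence
\[
M_m(a)=\frac{1}{2(a-1)}\int_0^1 t^{m/(a-1)-1}(1-t)^m\,\mathrm dt=\frac{1}{2(a-1)}B\left(\frac{m}{a-1},m+1\right),
\]
which is \eqref{eq:moment-integrals-beta}.

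\emph{Binomial formula.} Expand \(r_a(\xi)^m=e^{-2m\xi}(1-e^{-2(a-1)\xi})^m\) binomially and integrate termwise. The \(j\)-th term gives \((-1)^j\binom{m}{j}\int_0^\infty e^{-2(m+(a-1)j)\xi}\,\mathrm d\xi=(-1)^j\binom{m}{j}\big/\bigl(2(m+(a-1)j)\bigr)\). Summing over \(0\le j\le m\) gives \eqref{eq:moment-integrals-binomial}.

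\emph{Asymptotic.} Write \(M_m(a)=\int_0^\infty e^{m\log r_a(\xi)}\,\mathrm d\xi\) and apply Laplace's method at the unique maximizer \(\xi_0\) of \(r_a\) from Lemma~\ref{lem:basic-symbol-properties}.
\begin{itemize}
\item Setting \(r_a'(\xi_0)=0\) gives \(e^{2(a-1)\xi_0}=a\), and therefore \(r_a(\xi_0)=\rho_a\).
\item At the critical point, \((\log r_a)''(\xi_0)=r_a''(\xi_0)/\rho_a\). Compute
\[
r_a''(\xi_0)=4e^{-2\xi_0}-4a^2e^{-2a\xi_0}=4e^{-2\xi_0}(1-a)
\]
using \(e^{-2(a-1)\xi_0}=1/a\). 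Also \(\rho_a=e^{-2\xi_0}(1-1/a)=e^{-2\xi_0}(a-1)/a\). Hence \((\log r_a)''(\xi_0)=-4a\).
\item Laplace's method then gives \(M_m(a)\sim\rho_a^m\sqrt{2\pi/(4am)}=\sqrt{\pi/(2a)}\,m^{-1/2}\rho_a^m\), which is \eqref{eq:moment-integrals-asymptotic}.
\end{itemize}

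The only analytic point to check is that Laplace's method applies on the half-line. Near \(\xi=0\) we have \(r_a\to0\), and at infinity \(r_a\) decays exponentially. Away from a neighbourhood of \(\xi_0\), \(r_a\le\rho_a-\varepsilon\), so the contribution there is bounded by \((\rho_a-\varepsilon)^{m-1}M_1(a)\), which is exponentially smaller than \(\rho_a^m m^{-1/2}\). The local Gaussian approximation near \(\xi_0\) is standard.

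As a cross-check, Stirling's formula applied to the Beta expression should give the same constant.
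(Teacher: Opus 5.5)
Your proof is correct. The Beta and binomial parts match the paper. Your single substitution \(t=e^{-2(a-1)\xi}\) is the composition of the paper's \(t=e^{-2\xi}\) and \(s=t^{a-1}\). The paper also obtains the binomial form by expanding before the second change of variables.

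For the asymptotic you take a genuinely different route. The paper rewrites the Beta value as \(\Gamma(m/(a-1))\Gamma(m+1)/\Gamma(am/(a-1)+1)\) and applies Stirling's formula. You instead apply Laplace's method directly to \(\int_0^\infty e^{m\log r_a(\xi)}\,\mathrm d\xi\). Your computations \(e^{2(a-1)\xi_0}=a\), \(r_a(\xi_0)=\rho_a\) and \((\log r_a)''(\xi_0)=-4a\) are right, and they give exactly \(\sqrt{\pi/(2a)}\,m^{-1/2}\rho_a^m\). Your localization step is adequate: \(\sup_{|\xi-\xi_0|\ge\delta}r_a<\rho_a\) holds because \(r_a\) is continuous, has a unique maximizer, and tends to \(0\) at both ends, and \(M_1(a)<\infty\) controls the tail.

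What your route buys is transparency. It shows that the exponential rate is the maximum of the symbol and that the prefactor comes from the curvature at the maximizer. It also does not use the Beta identity, so it would work for symbols with no closed-form moments.

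The paper's Stirling route is shorter once the Beta formula is available and is purely algebraic. Its one nontrivial step is recognizing that \(b^b/(b+1)^{b+1}=(a-1)a^{-a/(a-1)}=\rho_a\) and \(b/(b+1)=1/a\) when \(b=1/(a-1)\). Your proposed cross-check would have to carry out this same identification.
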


\begin{proof}
With \(t=e^{-2\xi}\), 
\[
        M_m(a)
        =
        \frac12\int_0^1 t^{m-1}(1-t^{a-1})^m\,\mathrm dt.
\]
The change of variables \(s=t^{a-1}\) gives
\eqref{eq:moment-integrals-beta}, while expanding the integrand before the
change gives \eqref{eq:moment-integrals-binomial}. Finally,
\[
        M_m(a)
        =
        \frac{1}{2(a-1)}
        \frac{
        \Gamma\left(\frac{m}{a-1}\right)\Gamma(m+1)
        }{
        \Gamma\left(\frac{am}{a-1}+1\right)
        },
\]
and Stirling's formula gives \eqref{eq:moment-integrals-asymptotic}.
\end{proof}

\subsection{\texorpdfstring{Taylor coefficients and Taylor radius of \(\Lambda_a\)}{Taylor coefficients and Taylor radius of Lambda a}}

We next identify the Taylor coefficients of \(\Lambda_a\) at the origin and
determine the exact radius of convergence. The answer is governed by the
maximum value \(\rho_a\) of the symbol.

\begin{proposition}
\label{prop:taylor-coefficients}
Let \(a>1\), define
\[
\Sigma_a
:=
\left\{
\theta\in\mathbb{C}:
1+(e^{\theta}-1)r\in(-\infty,0]
\text{ for some } r\in[0,\rho_a]
\right\}.
\]
Then \(\Lambda_a\) extends holomorphically to \(\mathbb{C}\setminus\Sigma_a\). For every integer \(n\geq1\),
\[
\Lambda_a^{(n)}(0)
=
\frac{1}{4\pi(a-1)}
\sum_{m=1}^{n}
(-1)^{m+1}(m-1)!S(n,m)
B\left(\frac{m}{a-1},m+1\right),
\]
where \(S(n,m)\) is the Stirling number of the second kind. The Taylor radius \(R_a\) of \(\Lambda_a\) at \(0\) is
\[
R_a
=
\operatorname{dist}(0,\Sigma_a)
=
\sqrt{
\pi^2+
\left(
\max\left\{
\log\frac{1-\rho_a}{\rho_a},
0
\right\}
\right)^2
}.
\]
Equivalently,
\[
R_a
=
\begin{cases}
\pi,
&
a\geq a_c,\\[2mm]
\sqrt{\pi^2+\log^2\left(\frac{1-\rho_a}{\rho_a}\right)},
&
1<a<a_c,
\end{cases}
\]
where \(a_c>1\) is the unique solution of \((a_c-1)a_c^{-a_c/(a_c-1)}=1/2\).
\end{proposition}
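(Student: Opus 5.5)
The plan has three parts: holomorphic extension, the Taylor coefficients, and the Taylor radius.

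\textbf{Holomorphic extension.} First, $\Sigma_a$ is closed. It is the image under the projection $(\theta,r)\mapsto\theta$ of the closed set $\{(\theta,r): 1+(e^\theta-1)r\le 0\}$ in $\mathbb C\times[0,\rho_a]$, and since $[0,\rho_a]$ is compact this image is closed. So $U:=\mathbb C\setminus\Sigma_a$ is open. It contains $0$, because at $\theta=0$ the value is $1$.

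For $\theta$ in a compact $K\subset U$, the compact image of $K\times[0,\rho_a]$ under $(\theta,u)\mapsto 1+(e^\theta-1)u$ avoids $(-\infty,0]$. Hence the principal logarithm is defined there and Lipschitz, and it vanishes at $u=0$. This gives $|\log(1+(e^\theta-1)r_a(\xi))|\le C_K r_a(\xi)$, exactly as in Proposition~\ref{prop:lambda-holomorphic-extension}. Morera's theorem and $r_a\in L^1$ then give holomorphy on $U$.

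This principal-log integral agrees with the continuation from $\theta=0$ on the component of $U$ containing $0$, which contains $S_\pi$. I will note that "extends" is meant in this sense: the principal logarithm of the integrand defines the extension. Different components would receive the principal-branch definition.

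\textbf{Taylor coefficients.} For $|\theta|$ small, $|(e^\theta-1)r_a|<1$, so I expand
\[
\log(1+zr)=\sum_{m\ge1}\frac{(-1)^{m+1}}{m}z^m r^m
\]
with $z=e^\theta-1$ and integrate termwise. Dominated convergence applies since $r_a^m\le\rho_a^{m-1}r_a$. This gives
\[
\Lambda_a(\theta)=\frac1{2\pi}\sum_{m\ge1}\frac{(-1)^{m+1}}m M_m(a)(e^\theta-1)^m .
\]
Next I substitute $(e^\theta-1)^m=m!\sum_k S(k,m)\theta^k/k!$, as in the proof of Lemma~\ref{lem:stirling-trace-derivatives}. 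Only $m\le n$ contribute to the $n$-th derivative, which yields $\Lambda_a^{(n)}(0)=\frac1{2\pi}\sum_m(-1)^{m+1}(m-1)!S(n,m)M_m(a)$. Inserting \eqref{eq:moment-integrals-beta} from Lemma~\ref{lem:moment-integrals} gives the stated formula.

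\textbf{Taylor radius.} Holomorphy on the disk of radius $\operatorname{dist}(0,\Sigma_a)$ gives $R_a\ge\operatorname{dist}(0,\Sigma_a)$. I must then compute the distance and show there is a genuine singularity at a nearest point.

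\emph{Computing the distance.} The equation $1+(e^\theta-1)r=-s$ with $s\ge0$ and $r\in(0,\rho_a]$ gives $e^\theta=1-(1+s)/r$, a real number $\le 1-1/r\le 1-1/\rho_a<0$. So $\theta=\log|e^\theta|+i(2k+1)\pi$, and $\Sigma_a$ is the union of horizontal sets $\{x+i(2k+1)\pi: e^{x}\ge 1/\rho_a-1\}$. The modulus is minimized at $k\in\{0,-1\}$ with $x$ closest to $0$ in $[\log\frac{1-\rho_a}{\rho_a},\infty)$. This gives $\operatorname{dist}=\sqrt{\pi^2+\max\{\log\frac{1-\rho_a}{\rho_a},0\}^2}$. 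Since $\rho_a$ is increasing in $a$ (checkable from $\log\rho_a$) with range $(0,1/e)$... here I must check the range. As $a\to\infty$, $\rho_a\to$? We have $(a-1)a^{-a/(a-1)}\sim a\cdot a^{-1}\cdot a^{-1/(a-1)}\to1$. So $\rho_a$ increases from $0$ to $1$, crossing $1/2$ at a unique $a_c$. Hence $\log\frac{1-\rho_a}{\rho_a}>0$ exactly when $a<a_c$, giving the case formula.

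\emph{The main obstacle: upper bound on $R_a$.} I would study $\Lambda_a'(\theta)=\frac1{2\pi}\int\frac{e^\theta r_a}{1+(e^\theta-1)r_a}d\xi$ near a nearest point $\theta_0=x_0+i\pi$. There $e^{\theta_0}-1=-1/\rho_a$, so the denominator $1-r_a(\xi)/\rho_a$ vanishes quadratically at the nondegenerate maximizer $\xi_a$ of $r_a$. Approaching $\theta_0$ from inside along $\theta=\theta_0-\epsilon$ with $\epsilon>0$, the denominator is about $c\epsilon+\kappa(\xi-\xi_a)^2$ and stays positive there. Thus $\int d\xi/(c\epsilon+\kappa(\xi-\xi_a)^2)\sim C\epsilon^{-1/2}\to\infty$, so $\Lambda_a'$ is unbounded near $\theta_0$ along that path.

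For $a\ge a_c$ the nearest points are $\pm i\pi$; there $e^\theta\to-1$ and $1+(e^\theta-1)r=1-2r$. This vanishes quadratically when $\rho_a=1/2$, but is merely negative on an interval when $\rho_a>1/2$. In that case I would argue instead that approaching $i\pi$ from the two sides $\operatorname{Im}\theta\uparrow\pi$ along the vertical line through $0$ gives integrand logarithms with imaginary parts $\to\pm\pi$ on $\{r_a>1/2\}$. This produces a jump, or divergence of the derivative, at the endpoint of the cut.

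Either way, a function holomorphic on a disk $|\theta|<R$ with $R>|\theta_0|$ would have to be bounded and continuous near $\theta_0$, which is a contradiction. Making this boundary analysis rigorous, especially in the case $\rho_a>1/2$ where the nearest point $i\pi$ lies in the interior of the cut's projection, is where the care is needed. Along the ray toward $i\pi$ I expect $\Lambda_a'$ still to blow up, since the denominator $1-2r_a$ passes through zero at two simple points; this gives logarithmic divergence, which is what I would verify.
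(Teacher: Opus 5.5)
Your holomorphic extension, the Taylor coefficients, the description of $\Sigma_a$ and of $\operatorname{dist}(0,\Sigma_a)$, and the upper bound for $a\le a_c$ all coincide with the paper's proof and are correct. For that upper bound you use the $\delta^{-1/2}$ blow-up of $\Lambda_a'(x+i\pi)$ as $x\uparrow x_a$, coming from the nondegenerate maximum of $r_a$. The gap is the case $a>a_c$, where $\rho_a>1/2$ and $x_a<0$. It cannot be closed, because there the claim $R_a=\pi$ is false. Neither of your two mechanisms works.

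First, $\Lambda_a'$ does not blow up as $\theta\to i\pi$. Along $\theta=i(\pi-\varepsilon)$ the denominator is $(1-2r_a)+i\varepsilon r_a+O(\varepsilon^2)$. Since the zeros of $1-2r_a$ are simple, Sokhotski--Plemelj gives finite boundary values (a principal value plus residue terms), not a logarithmic divergence.

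Second, the jump argument fails for a topological reason. The principal-branch integral does satisfy $\Lambda_a(x+i(\pi-\varepsilon))-\Lambda_a(x+i(\pi+\varepsilon))\to i|E_x|$, with $E_x=\{r_a>(1+e^x)^{-1}\}$. But for $\pi<R<\sqrt{\pi^2+x_a^2}$ the whole chord $\{x+i\pi:|x|<\sqrt{R^2-\pi^2}\}$ lies in $\Sigma_a$. So the cap $\{|\theta|<R,\ \operatorname{Im}\theta>\pi\}$ is a component of $\{|\theta|<R\}\setminus\Sigma_a$ different from the one containing $0$. A putative holomorphic function on $\{|\theta|<R\}$ need agree with the principal-branch integral only on the component of $0$, so the jump yields no contradiction.

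In fact the jump is removed by analytic continuation. Since $r_a$ is entire and $r_a'\neq0$ at the two roots of $r_a(\xi)=1/2$, the difference $\phi(w):=\xi_2(w)-\xi_1(w)$ of the roots of $r_a(\xi)=(1-w)^{-1}$ is holomorphic near $w=-1$, and $\phi(-e^x)=|E_x|$. Now define $f:=\Lambda_a+i\phi(e^\theta)$ on the upper cap, $f:=\Lambda_a-i\phi(e^\theta)$ on the lower cap, and $f:=\Lambda_a$ elsewhere. For $R>\pi$ close enough to $\pi$, this $f$ is continuous on $\{|\theta|<R\}$: the one-sided limits exist locally uniformly, since the logarithmic singularities at simple zeros are integrable. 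It is holomorphic off two segments, hence holomorphic on the whole disk.

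Equivalently, in the variable $w=e^\theta$ the point $e^{i\pi}=-1$ lies strictly inside the cut $(-\infty,1-1/\rho_a]$, not at its branch point. So $R_a>\pi$ for every $a>a_c$. Your blow-up argument still applies at the slit endpoint $x_a+i\pi$, approached along $x+i\pi$ with $x\uparrow x_a$; that segment lies in the component of $0$ once $R>\sqrt{\pi^2+x_a^2}$. This gives $\pi<R_a\le\sqrt{\pi^2+x_a^2}$.

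A quick sanity check is the limit $a\to\infty$. The principal-branch integral for $\Lambda_\infty$ jumps across the entire lines $\operatorname{Im}\theta=\pm\pi$, yet $\Lambda_\infty'(\theta)=\theta e^\theta/(4\pi(e^\theta-1))$ has Taylor radius $2\pi$.

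The paper's own proof of this case uses exactly your jump argument (``$x+i\pi$ is a genuine singular point for every $x\in(x_a,0)$'') and has the same defect. The formula for $R_a$ is correct only for $a\le a_c$.
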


\begin{proof}
Write \(\theta=x+iy\). For \(r\in[0,\rho_a]\),
\(1+(e^\theta-1)r=(1-r)+re^x(\cos y+i\sin y)\). This lies in
\((-\infty,0]\) only if \(y=(2k+1)\pi\) and \(e^x\ge(1-r)/r\). Hence, with
\(x_a:=\log((1-\rho_a)/\rho_a)\),
\[
        \Sigma_a
        =
        \bigcup_{k\in\mathbb Z}
        \{x+(2k+1)\pi i:x\ge x_a\}.
\]

Let \(K\Subset\mathbb C\setminus\Sigma_a\). The compact image of
\(K\times[0,\rho_a]\) under \((\theta,r)\mapsto 1+(e^\theta-1)r\) is disjoint
from \((-\infty,0]\). Thus the principal logarithm is holomorphic near this
image, and
\[
        \left|\log(1+(e^\theta-1)r_a(\xi))\right|
        \le C_K r_a(\xi),
        \qquad \theta\in K,\ \xi>0.
\]
Since \(r_a\in L^1(0,\infty)\), the integral converges locally uniformly on
\(\mathbb C\setminus\Sigma_a\), and Morera's theorem gives the holomorphic
extension.

Choose \(\eta>0\) such that
\(\rho_a\sup_{|\theta|<\eta}|e^\theta-1|<1\). The logarithmic series is then
dominated by an \(L^1\)-multiple of \(r_a\), and near \(0\),
\[
        \Lambda_a(\theta)
        =
        \frac{1}{2\pi}
        \sum_{m=1}^\infty
        \frac{(-1)^{m+1}}{m}(e^\theta-1)^m
        \int_0^\infty r_a(\xi)^m\,\mathrm d\xi .
\]
Using
\(\left.\frac{\mathrm d^n}{\mathrm d\theta^n}(e^\theta-1)^m\right|_{\theta=0}
=m!S(n,m)\), only \(m\le n\) contributes to \(\Lambda_a^{(n)}(0)\), and
Lemma~\ref{lem:moment-integrals} gives the displayed coefficient formula.

The holomorphic extension gives \(R_a\ge\operatorname{dist}(0,\Sigma_a)\). We
prove the reverse inequality. First suppose \(x_a\ge0\), and set
\(\theta_*:=x_a+i\pi\). Let \(\xi_a\) be the unique maximizer of \(r_a\). Since
\(r_a(\xi_a)=\rho_a\) and \(r_a''(\xi_a)<0\), there are
\(\varepsilon,c_1,c_2>0\) such that
\[
        \rho_a-c_2u^2
        \le
        r_a(\xi_a+u)
        \le
        \rho_a-c_1u^2,
        \qquad |u|<\varepsilon.
\]
For \(x<x_a\), differentiating under the integral sign along \(x+i\pi\) gives
\[
        \Lambda_a'(x+i\pi)
        =
        -\frac{e^x}{2\pi}
        \int_0^\infty
        \frac{r_a(\xi)}
        {1-(1+e^x)r_a(\xi)}
        \,\mathrm d\xi.
\]
With \(\delta_x:=1-(1+e^x)\rho_a\downarrow0\) as \(x\uparrow x_a\), the
quadratic maximum yields
\[
        |\Lambda_a'(x+i\pi)|
        \ge
        c\int_{-\varepsilon}^{\varepsilon}
        \frac{\mathrm du}{\delta_x+Cu^2}
        \to\infty.
\]
Thus \(\Lambda_a\) cannot extend holomorphically through \(\theta_*\), and
\(R_a\le|\theta_*|=\sqrt{x_a^2+\pi^2}\).

Now suppose \(x_a<0\). Fix \(x\in(x_a,0)\) and set
\(\lambda_x:=(1+e^x)^{-1}<\rho_a\). Then
\(E_x:=\{\xi>0:r_a(\xi)>\lambda_x\}\) is a nonempty bounded open interval. For
\(\theta_\varepsilon^\pm:=x+i(\pi\mp\varepsilon)\), the two logarithms
\(\log(1+(e^{\theta_\varepsilon^\pm}-1)r_a(\xi))\) approach the negative real
axis from opposite sides on \(E_x\), and their difference tends to \(2\pi i\)
there, while it tends to \(0\) for almost every \(\xi\notin E_x\).

Choose a compact interval \(J\subset(0,\infty)\) containing \(\overline{E_x}\)
in its interior. On \(J\), dominated convergence applies, since the two
logarithms are conjugate and their difference is uniformly bounded by \(2\pi\).
On \(J^c\), the arguments stay uniformly away from the branch cut, and the
Lipschitz bound \(C\varepsilon r_a(\xi)\) gives a vanishing contribution.
Consequently,
\[
        \lim_{\varepsilon\downarrow0}
        \left(
        \Lambda_a(\theta_\varepsilon^+)-\Lambda_a(\theta_\varepsilon^-)
        \right)
        =
        i|E_x|\ne0.
\]
Thus \(x+i\pi\) is a genuine singular point for every \(x\in(x_a,0)\). Taking
the infimum over \(x\in(x_a,0)\) gives \(R_a\le\pi\).

Combining the two cases gives \(R_a=\operatorname{dist}(0,\Sigma_a)\). The
nearest points of \(\Sigma_a\) to \(0\) lie on \(\operatorname{Im}\theta=\pm\pi\)
with real part \(\max\{x_a,0\}\), 
\[
R_a
=
\sqrt{\pi^2+\left(\max\{x_a,0\}\right)^2}
=
\sqrt{
\pi^2+
\left(
\max\left\{
\log\frac{1-\rho_a}{\rho_a},
0
\right\}
\right)^2
}.
\]
Finally,
\(\frac{\mathrm d}{\mathrm da}\log\rho_a=(\log a)/(a-1)^2>0\), while
\(\rho_a\to0\) as \(a\downarrow1\) and \(\rho_a\to1\) as \(a\to\infty\). Hence
there is a unique \(a_c>1\) with \(\rho_{a_c}=1/2\), and \(x_a\le0\) is
equivalent to \(a\ge a_c\).
\end{proof}

\begin{remark}
The set \(\Sigma_a\) is the obstruction to analytic continuation of \(\Lambda_a\) across the logarithmic branch cut. The transition at \(a=a_c\) occurs when \(\rho_a=\sup r_a\) crosses \(1/2\). Numerically, \(a_c\approx4.403497879062293\). For \(a\geq a_c\), the nearest obstruction lies at imaginary distance \(\pi\) from the origin, so the Taylor radius is exactly \(\pi\). Representative values are
\[
\renewcommand{\arraystretch}{1.25}
\begin{array}{c|c|c|c}
a & \rho_a & x_a & R_a\\
\hline
2 & 0.25 & \log 3 & \sqrt{\pi^2+\log^2 3} \\
4 & 3\cdot 4^{-4/3} & 0.110229903285\ldots & 3.143525891839\ldots \\
a_c & 0.5 & 0 & \pi \\
6 & 5\cdot 6^{-6/5} & -0.332452271263\ldots & \pi
\end{array}
\]
where \(x_a=\log((1-\rho_a)/\rho_a)\).
\end{remark}

\subsection{Dependence on the shape parameter}

We record the monotonicity of \(\Lambda_a\) in \(a\), together with the
thin-window limit \(a\downarrow1\) and the large-window limit \(a\to\infty\).

\begin{proposition}
\label{prop:monotonicity-thin-large-a}
\begin{enumerate}[label=\rm(\roman*)]

\item For each real \(\theta\), the map \(a\mapsto\Lambda_a(\theta)\) belongs to \(C^1((1,\infty))\), and
\[
\partial_a\Lambda_a(\theta)
=
\frac{e^{\theta}-1}{\pi}
\int_0^{\infty}
\frac{\xi e^{-2a\xi}}
{1+(e^{\theta}-1)(e^{-2\xi}-e^{-2a\xi})}
\,\mathrm{d}\xi.
\]
In particular, \(\partial_a\Lambda_a(\theta)\) has the sign of \(\theta\).

\item Let \(\delta:=a-1\). For every compact \(K\subset\mathbb{C}\), as \(\delta\downarrow0\),
\[
\Lambda_a(\theta)
=
\frac{\delta}{4\pi}(e^{\theta}-1)
-
\frac{\delta^2}{32\pi}
(e^{2\theta}+6e^{\theta}-7)
+
O_K(\delta^3)
\]
uniformly for \(\theta\in K\). The first-order term is of Poisson type.

\item For every compact \(K\subset S_{\pi}\), as \(a\to\infty\),
\[
\Lambda_a(\theta)
=
\Lambda_{\infty}(\theta)
-
\frac{1}{4\pi a}
\operatorname{Li}_2(1-e^{-\theta})
+
O_K(a^{-2})
\]
uniformly for \(\theta\in K\), where
\[
\Lambda_{\infty}(\theta)
:=
\frac{1}{2\pi}
\int_0^{\infty}
\log(1+(e^{\theta}-1)e^{-2\xi})
\,\mathrm{d}\xi
=
-\frac{1}{4\pi}
\operatorname{Li}_2(1-e^{\theta}).
\]
Here \(\operatorname{Li}_2\) denotes the principal branch on
\(\mathbb C\setminus[1,\infty)\).
\end{enumerate}
\end{proposition}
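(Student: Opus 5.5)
Throughout I work from the integral representation
\[
\Lambda_a(\theta)=\frac1{2\pi}\int_0^\infty\log\bigl(1+(e^\theta-1)r_a(\xi)\bigr)\,\mathrm d\xi ,
\qquad t:=e^\theta-1,
\]
and the substitution \(s=e^{-2\xi}\), which turns it into
\[
\Lambda_a(\theta)=\frac1{4\pi}\int_0^1\log\bigl(1+t(s-s^a)\bigr)\frac{\mathrm ds}{s}.
\]

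\emph{Part (i).} I differentiate under the integral sign in \(a\). Since \(\partial_a r_a(\xi)=2\xi e^{-2a\xi}\), the integrand derivative is
\[
\frac{t\,2\xi e^{-2a\xi}}{1+t\,r_a(\xi)}.
\]
Fix \(a\) in a compact subinterval \([a_0,a_1]\subset(1,\infty)\) and \(\theta\) real. The denominator is bounded below by \(\min\{1,1-\rho_{a_1}\}>0\). Indeed it equals \((1-r)+re^\theta\), which is at least \(\min(1,e^\theta)\). So the integrand is dominated by \(C\xi e^{-2a_0\xi}\). Continuity in \(a\) of the integrand, with the same domination, gives \(C^1\). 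The factor \(\frac{2}{2\pi}=\frac1\pi\) produces the displayed formula. The sign claim is immediate: the integrand is positive apart from the factor \(e^\theta-1\), which has the sign of \(\theta\).

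\emph{Part (ii).} I set \(\delta=a-1\) and expand the symbol in \(\delta\) uniformly in \(\xi\):
\[
r_a(\xi)=e^{-2\xi}\bigl(1-e^{-2\delta\xi}\bigr)=2\delta\xi e^{-2\xi}-2\delta^2\xi^2e^{-2\xi}+O(\delta^3\xi^3e^{-2\xi}).
\]
Next I expand the logarithm as \(\log(1+tr)=tr-\tfrac12t^2r^2+O(|t|^3r^3)\). This is valid uniformly for \(\theta\in K\), since \(\sup r_a=\rho_a\to0\) as \(\delta\to0\), so \(|t|\rho_a\) is small once \(\delta\) is small; this is why arbitrary compact \(K\subset\mathbb C\) is allowed.

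Integrating term by term uses three moments:
\[
\int_0^\infty\xi e^{-2\xi}\,\mathrm d\xi=\tfrac14,\qquad
\int_0^\infty\xi^2e^{-2\xi}\,\mathrm d\xi=\tfrac14,\qquad
\int_0^\infty\xi^2e^{-4\xi}\,\mathrm d\xi=\tfrac1{32}.
\]
These give
\[
\frac1{2\pi}\Bigl[t\Bigl(\tfrac{\delta}{2}-\tfrac{\delta^2}{2}\Bigr)-\tfrac12t^2\cdot 4\delta^2\cdot\tfrac1{32}\Bigr]
=\frac{\delta t}{4\pi}-\frac{\delta^2}{32\pi}\bigl(8t+t^2\bigr).
\]
Since \(t^2+8t=e^{2\theta}+6e^\theta-7\), this is the claim. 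A cross-check is to use the exact moments of Lemma~\ref{lem:moment-integrals} expanded in \(\delta\). The remainders are \(O(\delta^3)\) because all moments \(\int r_a^m\) are \(O(\delta^m)\), by the bound \(r_a\le 2\delta\xi e^{-2\xi}\).

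\emph{Part (iii).} I work in the \(s\)-form. Write \(u(s)=1+ts\), so that
\[
\Lambda_a-\Lambda_\infty=\frac1{4\pi}\int_0^1\log\Bigl(1-\frac{ts^a}{1+ts}\Bigr)\frac{\mathrm ds}{s}.
\]
For \(\theta\in K\Subset S_\pi\), both \(1+ts\) and \(1+t(s-s^a)\) stay a uniform distance from \((-\infty,0]\), by Lemma~\ref{lem:log-symbol-sobolev}(i) applied also to the limiting symbol. Then
\[
\log\Bigl(1-\frac{ts^a}{1+ts}\Bigr)=-\frac{ts^a}{1+ts}+O(s^{2a}),
\]
which is legitimate where \(s^a\) is small; near \(s=1\) the log is bounded and the region contributes only \(O(1/a)\) mass, so this needs care. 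The main term is
\[
-\frac1{4\pi}\int_0^1\frac{t s^{a-1}}{1+ts}\,\mathrm ds.
\]
Laplace-type asymptotics at \(s=1\) (set \(s=e^{-v/a}\)) give
\[
-\frac{1}{4\pi a}\cdot\frac{t}{1+t}+O(a^{-2}).
\]
But the claimed coefficient is \(\operatorname{Li}_2(1-e^{-\theta})\), not \(t/(1+t)=1-e^{-\theta}\). So the nonlinear terms near \(s=1\) contribute at order \(1/a\) as well. I should therefore not linearize, and instead rescale fully: \(s=e^{-v/a}\) and \(\mathrm ds/s=\mathrm dv/a\). The difference becomes
\[
\frac1{4\pi a}\int_0^\infty\log\Bigl(1-\frac{te^{-v}}{1+t}\Bigr)\mathrm dv+O(a^{-2}).
\]
With \(w=t/(1+t)=1-e^{-\theta}\), this integral equals \(-\operatorname{Li}_2(w)\), matching the statement.

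The closed form \(\Lambda_\infty=-\frac1{4\pi}\operatorname{Li}_2(1-e^\theta)\) is the same dilogarithm primitive \(\int_0^1\log(1+ts)\,\mathrm ds/s=-\operatorname{Li}_2(-t)\).

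The main obstacle is the uniform \(O(a^{-2})\) error in this rescaling. It requires replacing \(1+te^{-v/a}\) by \(1+t\) with an error of order \(v/a\), and controlling \(\log\) differences uniformly on \(K\) using the branch-cut distance bound. The region \(v\gtrsim a\) is handled by exponential smallness.
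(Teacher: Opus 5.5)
Your proposal is correct and follows essentially the same route as the paper: for (i), differentiation under the integral sign using $1+(e^\theta-1)r_a\ge\min\{1,e^\theta\}$ and domination by $C\xi e^{-2a_0\xi}$; for (ii), the second-order expansion of the logarithm together with $\int r_a=\delta/2-\delta^2/2+O(\delta^3)$, $\int r_a^2=\delta^2/8+O(\delta^3)$ and $\int r_a^3=O(\delta^3)$; and for (iii), the rescaling $s=e^{-v/a}$ followed by a mean-value bound $O_K(a^{-1}ve^{-v})$ for replacing $e^{-v/a}$ by $1$, which is exactly the paper's bound on $\partial_vG_\theta(v,u)$. One small point is that Lemma~\ref{lem:log-symbol-sobolev}(i) is stated only for values $u\in[0,\rho_a]$ with $\rho_a<1$, so for the limiting symbol $e^{-2\xi}$ and for $1+te^{-v/a}$ you must rerun its argument with $u\in[0,1]$; this still works because $e^\theta\notin(-\infty,0]$ on $S_\pi$.
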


\begin{proof}
For real \(\theta\),
\[
        1+(e^\theta-1)(e^{-2\xi}-e^{-2a\xi})
        =
        1-r_a(\xi)+e^\theta r_a(\xi)
        \ge \min\{1,e^\theta\}.
\]
Differentiation under the integral sign is justified on compact
\(a\)-intervals by domination with \(C\xi e^{-2a_0\xi}\), giving the displayed
formula for \(\partial_a\Lambda_a\). Its sign is that of \(e^\theta-1\).

For the thin-window regime, set \(z=e^\theta-1\) and \(\delta=a-1\). Since
\(0\le r_a(\xi)\le 2\delta\xi e^{-2\xi}\), uniformly for \(\theta\in K\),
\[
        \log(1+zr_a)=zr_a-\frac{z^2}{2}r_a^2+O_K(r_a^3).
\]
Using
\(\int_0^\infty r_a=\delta/2-\delta^2/2+O(\delta^3)\),
\(\int_0^\infty r_a^2=\delta^2/8+O(\delta^3)\), and
\(\int_0^\infty r_a^3=O(\delta^3)\), we obtain
\[
        \Lambda_a(\theta)
        =
        \frac{z\delta}{4\pi}
        -
        \frac{z\delta^2}{4\pi}
        -
        \frac{z^2\delta^2}{32\pi}
        +
        O_K(\delta^3),
\]
which is the stated expansion.

For the large-\(a\) regime, again put \(z=e^\theta-1\). The change
\(t=e^{-2\xi}\) gives
\[
        \Lambda_a(\theta)
        =
        \frac{1}{4\pi}
        \int_0^1
        \frac{\log(1+z(t-t^a))}{t}\,\mathrm dt,
        \qquad
        \Lambda_\infty(\theta)
        =
        \frac{1}{4\pi}
        \int_0^1
        \frac{\log(1+zt)}{t}\,\mathrm dt.
\]
In \(\Lambda_\infty-\Lambda_a\), substituting \(s=t^a\) and then
\(s=e^{-u}\) yields
\[
        \Lambda_\infty(\theta)-\Lambda_a(\theta)
        =
        \frac{1}{4\pi a}
        \int_0^\infty G_\theta(a^{-1},u)\,\mathrm du,
\]
where
\[
        G_\theta(v,u)
        =
        \log(1+ze^{-vu})
        -
        \log(1+ze^{-vu}-ze^{-u}).
\]
For compact \(K\subset S_\pi\), all logarithms stay uniformly away from the
branch cut, and
\[
        \partial_vG_\theta(v,u)
        =
        \frac{uz^2e^{-vu}e^{-u}}
        {(1+ze^{-vu})(1+ze^{-vu}-ze^{-u})},
        \qquad
        |\partial_vG_\theta(v,u)|\le C_Kue^{-u}.
\]
Hence \(G_\theta(a^{-1},u)=G_\theta(0,u)+O_K(a^{-1}ue^{-u})\), and
\[
        \Lambda_\infty(\theta)-\Lambda_a(\theta)
        =
        \frac{1}{4\pi a}
        \int_0^\infty G_\theta(0,u)\,\mathrm du
        +
        O_K(a^{-2}).
\]
Since
\(G_\theta(0,u)=-\log(1-(1-e^{-\theta})e^{-u})\), the standard representation
\[
        \operatorname{Li}_2(w)
        =-\int_0^\infty\log(1-we^{-u})\,\mathrm du
\]
gives
\(\int_0^\infty G_\theta(0,u)\,\mathrm du=\operatorname{Li}_2(1-e^{-\theta})\).
The formula for \(\Lambda_\infty\) follows from the same representation.
\end{proof}

\begin{corollary}
\label{cor:identifiability}
Let \(a,b>1\). If \(\Lambda_a(\theta)=\Lambda_b(\theta)\) for all \(\theta\) in a nonempty open subset of \(S_{\pi}\), then \(a=b\). Moreover, if \(I=[\alpha,\beta]\) with \(0<\alpha<\beta<\infty\), then the pair
\(
\left(\Lambda_I'(0),\Lambda_I''(0)\right)
\)
determines \(I\) uniquely.
\end{corollary}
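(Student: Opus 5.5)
For the first assertion, I will reduce to the real line and use a strict monotonicity argument. Since \(\Lambda_a\) and \(\Lambda_b\) are holomorphic on the connected domain \(S_\pi\) (Proposition~\ref{prop:lambda-holomorphic-extension}), agreement on a nonempty open subset forces \(\Lambda_a=\Lambda_b\) on all of \(S_\pi\), by the identity theorem. In particular \(\Lambda_a(\theta)=\Lambda_b(\theta)\) for every real \(\theta\). Taking any \(\theta>0\), Proposition~\ref{prop:monotonicity-thin-large-a}(i) shows that \(c\mapsto\Lambda_c(\theta)\) is \(C^1\) on \((1,\infty)\) with strictly positive derivative. The sign of the integrand there is that of \(e^\theta-1>0\), and the denominator is positive. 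So this map is strictly increasing, and \(\Lambda_a(\theta)=\Lambda_b(\theta)\) gives \(a=b\). As an alternative, one could compare \(\Lambda_a'(0)=\frac1{4\pi}(1-\frac1a)\), which is strictly increasing in \(a\); this avoids even the monotonicity proposition.

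For the second assertion, I will use Corollary~\ref{cor:general-interval-scaling}, which gives \(\Lambda_I=\alpha^{-1}\Lambda_a\) with \(a=\beta/\alpha\). Combined with \eqref{eq:lambda-a-derivatives-at-zero}, this yields
\[
\Lambda_I'(0)=\frac{1}{4\pi}\Bigl(\frac1\alpha-\frac1\beta\Bigr)=\frac{a-1}{4\pi\alpha a},
\qquad
\Lambda_I''(0)=\frac{(a-1)(a+3)}{8\pi\alpha a(a+1)}.
\]
The ratio eliminates \(\alpha\):
\[
\frac{\Lambda_I''(0)}{\Lambda_I'(0)}=\frac{a+3}{2(a+1)}=\frac12+\frac{1}{a+1}.
\]
This is a strictly decreasing function of \(a\) on \((1,\infty)\), taking values in \((1/2,1)\). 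Hence the ratio determines \(a\) uniquely. Once \(a\) is known, \(\alpha=(a-1)/(4\pi a\Lambda_I'(0))\) is determined, since \(\Lambda_I'(0)>0\), and then \(\beta=a\alpha\).

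No step is a real obstacle. The only points needing care are the connectedness of \(S_\pi\), which is needed for the identity theorem, and checking that \(\Lambda_I'(0)\neq0\) so that the ratio and the formula for \(\alpha\) are well defined.
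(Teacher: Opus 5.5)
Your proof is correct and follows the paper's argument: the identity theorem on the connected strip \(S_\pi\), then for intervals the ratio \(\Lambda_I''(0)/\Lambda_I'(0)=(a+3)/(2(a+1))\) to recover \(a\), and \(\Lambda_I'(0)\) to recover \(\alpha\). The only difference is in the first part, where the paper uses your alternative comparison \(\Lambda_a'(0)=\frac{1}{4\pi}(1-\frac1a)\) instead of the \(a\)-monotonicity from Proposition~\ref{prop:monotonicity-thin-large-a}(i); both are valid.
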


\begin{proof}
By holomorphy, equality on a nonempty open subset of \(S_\pi\) implies equality
on all of \(S_\pi\). Evaluating derivatives at \(0\) gives
\((4\pi)^{-1}(1-a^{-1})=(4\pi)^{-1}(1-b^{-1})\), hence \(a=b\).

For \(I=[\alpha,\beta]\), set \(a=\beta/\alpha\). The scaling relation
\(\Lambda_I=\alpha^{-1}\Lambda_a\) gives
\[
        \Lambda_I'(0)
        =
        \frac{1}{\alpha}\frac{a-1}{4\pi a},
        \qquad
        \Lambda_I''(0)
        =
        \frac{1}{\alpha}\frac{(a-1)(a+3)}{8\pi a(a+1)}.
\]
Thus \(\Lambda_I''(0)/\Lambda_I'(0)=(a+3)/(2(a+1))\), and this strictly
decreasing function determines \(a\). Then \(\Lambda_I'(0)\) determines
\(\alpha\), and \(\beta=\alpha a\).
\end{proof}

\subsection{Regularity and endpoint behavior of the rate function}

We now turn to the Legendre transform \(J_a\). It is analytic in the interior
of its effective domain, while at the hard endpoint \(x=0\) it has the
characteristic \(x\log x\) singularity.

\begin{proposition}
\label{prop:rate-function-regularity}
The rate function \(J_a\) is real analytic and strictly convex on
\((0,\infty)\). Moreover, there exist \(\varepsilon_a>0\) and a function
\(h_a\), holomorphic on \(\{z\in\mathbb C:|z|<\varepsilon_a\}\), such that
\[
        J_a(x)
        =
        J_a(0)+x\log x+h_a(x),
        \qquad 0<x<\varepsilon_a,
\]
and, with the principal branch of the logarithm,
\(z\mapsto J_a(0)+z\log z+h_a(z)\) is holomorphic on
\(\{0<|z|<\varepsilon_a\}\setminus(-\varepsilon_a,0]\). In particular, as
\(x\downarrow0\),
\[
        J_a(x)
        =
        J_a(0)+x\log x+\gamma_a x+O(x^2)
\]
for some real constant \(\gamma_a\).
\end{proposition}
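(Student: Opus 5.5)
Real analyticity and strict convexity on \((0,\infty)\) come first. By Proposition~\ref{prop:lambda-holomorphic-extension}, \(\Lambda_a\) is real analytic on \(\mathbb R\), and by Lemma~\ref{lem:convexity-steepness}, \(\Lambda_a''>0\) there. The analytic inverse function theorem then makes \(\theta_a=(\Lambda_a')^{-1}:(0,\infty)\to\mathbb R\) real analytic. Corollary~\ref{cor:rate-function-parametrization} gives \(J_a'=\theta_a\), so \(J_a\) is real analytic on \((0,\infty)\), and strict convexity was already proved there.

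For the endpoint we work in the variable \(w:=e^\theta\), with \(w\downarrow0\) corresponding to \(\theta\to-\infty\) and \(x\downarrow0\). Set
\[
F(w):=\frac{1}{2\pi}\int_0^\infty\log\bigl(1-r_a(\xi)+w\,r_a(\xi)\bigr)\,\mathrm d\xi .
\]
Since \(1-r_a\ge1-\rho_a>0\), the argument of the logarithm stays in a compact subset of the right half-plane for \(|w|<\varepsilon\) small. Hence \(F\) is holomorphic on a disk around \(w=0\), with \(|\log(\cdots)|\le Cr_a\) giving the needed domination, and \(\Lambda_a(\theta)=F(e^\theta)\).

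Now compute the Taylor coefficients of \(F\) at \(0\). We have \(F(0)=\Lambda_a(-\infty)=-J_a(0)\) and
\[
c_1:=F'(0)=\frac{1}{2\pi}\int_0^\infty\frac{r_a}{1-r_a}\,\mathrm d\xi>0 .
\]
The slope is \(x=\Lambda_a'(\theta)=wF'(w)=:g(w)\), where \(g\) is holomorphic near \(0\) with \(g(0)=0\) and \(g'(0)=c_1\neq0\). So \(g\) has a holomorphic inverse \(w=\omega(x)\) on a small disk, with \(\omega(x)=x/c_1+O(x^2)\) and \(\omega(x)>0\) for small \(x>0\). Therefore \(\theta_a(x)=\log\omega(x)=\log x+\log\bigl(\omega(x)/x\bigr)\), where \(\ell(x):=\log(\omega(x)/x)\) is holomorphic near \(0\) with \(\ell(0)=-\log c_1\).

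Substituting into the Legendre representation gives
\[
J_a(x)=x\theta_a(x)-F(\omega(x))=x\log x+x\ell(x)-F(\omega(x)).
\]
Set \(h_a(x):=x\ell(x)-F(\omega(x))+F(0)\). This is holomorphic on a disk \(|z|<\varepsilon_a\) and vanishes at \(0\), and the identity above is exactly \(J_a(x)=J_a(0)+x\log x+h_a(x)\). The complex-domain statement follows at once, since \(z\log z\) with the principal branch is holomorphic off the cut \((-\varepsilon_a,0]\).

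For the expansion, \(h_a'(0)=\ell(0)-F'(0)\omega'(0)=-\log c_1-1\). This gives \(\gamma_a=-1-\log c_1\), a real constant, and \(J_a(x)=J_a(0)+x\log x+\gamma_ax+O(x^2)\).

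The main obstacle is the holomorphy of \(F\) at \(w=0\), i.e.\ the domination argument. It rests on the bound \(\rho_a<1\) from Lemma~\ref{lem:basic-symbol-properties}, together with checking that \(c_1\) is finite and nonzero. Everything else is the analytic inverse function theorem applied once.
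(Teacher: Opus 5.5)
Your proposal is correct and follows essentially the same route as the paper. Your $F$ is exactly $\Lambda_a(-\infty)+\Psi_a$ with $\Psi_a(z)=(2\pi)^{-1}\int_0^\infty\log(1+zq_a(\xi))\,\mathrm d\xi$ and $q_a=r_a/(1-r_a)$, your inversion of $g(w)=wF'(w)$ near $0$ is the paper's inversion of $G_a$, and you get the same constant $\gamma_a=-1-\log c_1$. The only difference is that you read off $h_a$ by substituting into the Legendre identity $J_a(x)=x\theta_a(x)-F(\omega(x))$, whereas the paper integrates $J_a'=\theta_a$ from $0$; your version has the small advantage of not needing right-continuity of $J_a$ at $0$.
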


\begin{proof}
For \(x_0>0\), let \(\theta_0\in\mathbb R\) be the unique point satisfying
\(\Lambda_a'(\theta_0)=x_0\). Since \(\Lambda_a\) is holomorphic on \(S_\pi\)
and \(\Lambda_a''(\theta_0)>0\), the holomorphic inverse function theorem gives
a local holomorphic inverse \(\theta_a(x)\) of \(\Lambda_a'\). Hence, near
\(x_0\),
\[
        J_a(x)=x\theta_a(x)-\Lambda_a(\theta_a(x)),
        \qquad
        J_a''(x)=\frac{1}{\Lambda_a''(\theta_a(x))}>0.
\]

It remains to analyze \(x\downarrow0\). Set \(q_a:=r_a/(1-r_a)\). Since
\(0\le r_a\le\rho_a<1\) and \(r_a\in L^1(0,\infty)\), we have
\(q_a\in L^1(0,\infty)\cap L^\infty(0,\infty)\). Thus
\[
        \Psi_a(z)
        :=
        \frac{1}{2\pi}\int_0^\infty \log(1+zq_a(\xi))\,\mathrm d\xi
\]
is holomorphic near \(0\). For real \(\theta\) sufficiently negative, with
\(z=e^\theta\),
\[
        \Lambda_a(\theta)
        =
        \Lambda_a(-\infty)+\Psi_a(z),
        \qquad
        \Lambda_a'(\theta)=z\Psi_a'(z),
\]
where
\(\Lambda_a(-\infty)=(2\pi)^{-1}\int_0^\infty\log(1-r_a(\xi))\,\mathrm d\xi\).

Let \(G_a(z):=z\Psi_a'(z)\). Since
\(\Psi_a'(0)=(2\pi)^{-1}\int_0^\infty q_a(\xi)\,\mathrm d\xi=:\kappa_a>0\),
we have \(G_a(z)=\kappa_a z+O(z^2)\). Hence \(G_a\) has a local holomorphic
inverse \(g_a\), and \(g_a(w)/w\) is holomorphic and nonvanishing near \(0\).
After shrinking the disk, write \(g_a(w)=w e^{L_a(w)}\), with \(L_a\)
holomorphic and real-valued on real \(w\) near \(0\). For small \(x>0\), the
equation \(x=\Lambda_a'(\theta_a(x))\) gives
\[
        e^{\theta_a(x)}=g_a(x),
        \qquad
        \theta_a(x)=\log x+L_a(x).
\]

Since \(J_a'(x)=\theta_a(x)\) on \((0,\infty)\) and
\(J_a(0)=-\Lambda_a(-\infty)\), the preceding expansion yields
\[
        J_a(x)
        =
        J_a(0)+x\log x-x+\int_0^x L_a(t)\,\mathrm dt .
\]
Thus \(h_a(z):=-z+\int_0^z L_a(w)\,\mathrm dw\) is holomorphic near \(0\). The
slit-disk continuation follows from the principal branch of \(\log z\), and
\(h_a(x)=\gamma_a x+O(x^2)\) with \(\gamma_a\in\mathbb R\).
\end{proof}

\subsection{Asymptotics of the rate function}

The asymptotic expansions for \(\Lambda_a\) transfer to \(J_a\) by Legendre
duality. The thin-window regime is Poissonian at leading order.

\begin{proposition}
\label{prop:rate-function-asymptotics}
Let \(J_{\infty}\) be the Legendre--Fenchel transform of \(\Lambda_{\infty}\),
and write \(\delta:=a-1>0\). Then:
\begin{enumerate}[label=\rm(\roman*)]

\item For every compact \(K\subset(0,\infty)\), as \(\delta\downarrow0\),
\[
        J_a(\delta x)
        =
        \delta J_{\operatorname{Pois}}(x)
        +
        \delta^2\,\frac{16\pi^2x^2+24\pi x-7}{32\pi}
        +
        O_K(\delta^3)
\]
uniformly for \(x\in K\), where \(J_{\operatorname{Pois}}\) is the Cram\'er
transform of the Poisson law with mean \(1/(4\pi)\),
\[
        J_{\operatorname{Pois}}(x)
        =
        x\log(4\pi x)-x+\frac{1}{4\pi},
        \qquad x>0.
\]

\item For every compact \(K\subset(0,\infty)\), if
\(\theta_{\infty}(x)\) is defined by
\(\Lambda_{\infty}'(\theta_{\infty}(x))=x\), then, as \(a\to\infty\),
\[
        J_a(x)
        =
        J_{\infty}(x)
        +
        \frac{1}{4\pi a}
        \operatorname{Li}_2 \bigl(1-e^{-\theta_{\infty}(x)}\bigr)
        +
        O_K(a^{-2})
\]
uniformly for \(x\in K\).
\end{enumerate}
\end{proposition}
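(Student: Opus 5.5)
Both parts will go through Legendre duality in the parametrized form of Corollary~\ref{cor:rate-function-parametrization}: $J_a(x)=\theta_a(x)x-\Lambda_a(\theta_a(x))$ with $\Lambda_a'(\theta_a(x))=x$. Each time, I will feed in the corresponding expansion of $\Lambda_a$ from Proposition~\ref{prop:monotonicity-thin-large-a} and perturb the saddle point. To use it, I first upgrade that Proposition to derivatives. The expansions there are uniform for $\theta$ in compact subsets of $\mathbb C$ (thin regime) or of $S_\pi$ (large-$a$ regime), and all functions involved are holomorphic in $\theta$. Cauchy's estimates on a slightly larger compact set therefore give the same expansions, with the same error orders, for $\Lambda_a'$ and $\Lambda_a''$, uniformly on real compact $\theta$-intervals.

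\textbf{Part (i).} By the dilation-type identity $J_a(\delta x)=\sup_\theta\{\theta\delta x-\Lambda_a(\theta)\}$, I write $\Lambda_a=\delta\Lambda^{(1)}+\delta^2\Lambda^{(2)}+O(\delta^3)$, where $\Lambda^{(1)}(\theta)=(e^\theta-1)/(4\pi)$ and $\Lambda^{(2)}(\theta)=-(e^{2\theta}+6e^\theta-7)/(32\pi)$. Dividing by $\delta$, the variational problem becomes
\[
J_a(\delta x)/\delta=\sup_\theta\{\theta x-\Lambda^{(1)}(\theta)-\delta\Lambda^{(2)}(\theta)+O(\delta^2)\}.
\]
\begin{enumerate}[label=(\alph*)]
\item \emph{Localizing the maximizer.} For $x\in K\subset(0,\infty)$, the unperturbed maximizer is $\theta_0(x)=\log(4\pi x)$, which lies in a compact set. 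Strict concavity of $\theta x-\Lambda_a(\theta)/\delta$ near $\theta_0$ follows from $\Lambda_a''/\delta\to e^\theta/(4\pi)$ uniformly. Monotonicity of $\Lambda_a'/\delta$ then confines the true maximizer $\theta_a(\delta x)$ to a fixed compact interval for small $\delta$. This confinement is what justifies applying the uniform expansions there.
\item \emph{Envelope expansion.} The first-order envelope theorem gives
\[
J_a(\delta x)/\delta=J_{\operatorname{Pois}}(x)-\delta\Lambda^{(2)}(\theta_0(x))+O(\delta^2),
\]
because the $O(\delta)$ shift of the maximizer only contributes at order $\delta^2$.
\item \emph{Evaluating the correction.} Substituting $e^{\theta_0}=4\pi x$ gives
\[
-\Lambda^{(2)}(\theta_0)=\frac{16\pi^2x^2+24\pi x-7}{32\pi},
\]
and $\theta_0x-\Lambda^{(1)}(\theta_0)=x\log(4\pi x)-x+1/(4\pi)$. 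Multiplying through by $\delta$ yields the claimed formula.
\end{enumerate}

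\textbf{Part (ii).} The argument is identical. I take $\theta_\infty(x)$ as the base point; it exists for $x>0$ because $\Lambda_\infty$ is strictly convex with slopes running over $(0,\infty)$, and this is verified exactly as in Lemma~\ref{lem:convexity-steepness} with $r=e^{-2\xi}$ and $\rho=1$, the slope near $-\infty$ still tending to $0$. The envelope theorem then gives
\[
J_a(x)=J_\infty(x)+\frac{1}{4\pi a}\operatorname{Li}_2\bigl(1-e^{-\theta_\infty(x)}\bigr)+O(a^{-2}).
\]
Here the uniform $O(a^{-1})$ control of $\Lambda_a'-\Lambda_\infty'$, together with $\Lambda_\infty''>0$ on compacts, bounds the maximizer shift by $O(a^{-1})$, and that shift enters only quadratically.

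\textbf{Main obstacle.} The delicate step is the localization of the maximizer in (i). The expansion of $\Lambda_a$ is only valid for $\theta$ in compact sets, so I must rule out the supremum being approached at very large or very negative $\theta$. I plan to do this by convexity. $\Lambda_a'$ is increasing, and at the two endpoints $\theta_0(x)\pm1$ the uniform expansion of $\Lambda_a'/\delta$ shows that $x-\Lambda_a'/\delta$ changes sign. The unique critical point, which is the global maximizer, therefore lies between these endpoints. The same device handles (ii).
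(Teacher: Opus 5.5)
Your proposal is correct and follows essentially the same route as the paper: Cauchy estimates to differentiate the expansions of Proposition~\ref{prop:monotonicity-thin-large-a}, localization of the maximizer (the paper phrases this through the implicit function theorem, and you make it explicit through the sign change of $x-\Lambda_a'/\delta$ and the monotonicity of $\Lambda_a'$), and a first-order envelope argument in which the $O(\delta)$, resp.\ $O(a^{-1})$, shift of the maximizer contributes only at second order. One small correction: the limit $\Lambda_\infty'(\theta)\to0$ as $\theta\to-\infty$ does not follow verbatim from Lemma~\ref{lem:convexity-steepness} with $\rho=1$, because the majorant $r/(1-\rho)$ used there degenerates; it is instead immediate from the closed form $\Lambda_\infty'(\theta)=\theta e^\theta/(4\pi(e^\theta-1))$, which is what the paper uses.
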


\begin{proof}
For \rm(i), Proposition~\ref{prop:monotonicity-thin-large-a}\rm(ii),
together with Cauchy's formula on slightly larger compact sets, gives,
uniformly on compact real \(\theta\)-intervals,
\[
        \Lambda_a(\theta)
        =
        \delta\,\frac{e^\theta-1}{4\pi}
        -
        \delta^2\,\frac{e^{2\theta}+6e^\theta-7}{32\pi}
        +
        O(\delta^3),
\]
\[
        \Lambda_a'(\theta)
        =
        \delta\,\frac{e^\theta}{4\pi}
        -
        \delta^2\,\frac{e^{2\theta}+3e^\theta}{16\pi}
        +
        O(\delta^3).
\]
For \(x>0\), the leading maximizer is
\(\theta_*(x):=\log(4\pi x)\), and
\[
        J_{\operatorname{Pois}}(x)
        =
        x\theta_*(x)-\frac{e^{\theta_*(x)}-1}{4\pi}
        =
        x\log(4\pi x)-x+\frac{1}{4\pi}.
\]
On each compact \(K\subset(0,\infty)\), the implicit function theorem applied
to \(\delta^{-1}\Lambda_a'(\theta)\) gives the maximizer
\(\vartheta_a(x)=\theta_*(x)+O_K(\delta)\) of \(J_a(\delta x)\). Hence
\[
        J_a(\delta x)
        =
        \delta\left(x\vartheta_a(x)-\frac{e^{\vartheta_a(x)}-1}{4\pi}\right)
        +
        \delta^2\,\frac{e^{2\vartheta_a(x)}+6e^{\vartheta_a(x)}-7}{32\pi}
        +
        O_K(\delta^3).
\]
Since the first derivative of the leading objective vanishes at
\(\theta_*(x)\), replacing \(\vartheta_a(x)\) by \(\theta_*(x)\) gives
\[
        J_a(\delta x)
        =
        \delta J_{\operatorname{Pois}}(x)
        +
        \delta^2\,\frac{16\pi^2x^2+24\pi x-7}{32\pi}
        +
        O_K(\delta^3).
\]

For \rm(ii), Proposition~\ref{prop:monotonicity-thin-large-a}\rm(iii),
together with Cauchy's formula on slightly larger compact sets, gives the
differentiated expansion uniformly on compact real \(\theta\)-intervals.
Moreover
\[
        \Lambda_\infty'(\theta)
        =
        \frac{\theta e^\theta}{4\pi(e^\theta-1)},
        \qquad
        \Lambda_\infty''(\theta)
        =
        \frac{e^\theta(e^\theta-1-\theta)}{4\pi(e^\theta-1)^2}>0,
\]
with continuous values at \(\theta=0\). Thus
\(\Lambda_\infty':\mathbb R\to(0,\infty)\) is a strictly increasing bijection.
For compact \(K\subset(0,\infty)\), the implicit function theorem gives the
maximizer \(\theta_a(x)\) of \(J_a(x)\), with
\(\theta_a(x)=\theta_\infty(x)+O_K(a^{-1})\). Therefore
\[
        J_a(x)
        =
        x\theta_a(x)-\Lambda_\infty(\theta_a(x))
        +
        \frac{1}{4\pi a}
        \operatorname{Li}_2\bigl(1-e^{-\theta_a(x)}\bigr)
        +
        O_K(a^{-2}).
\]
Taylor expansion of \(\theta\mapsto x\theta-\Lambda_\infty(\theta)\) at
\(\theta_\infty(x)\), using
\(x=\Lambda_\infty'(\theta_\infty(x))\), gives
\[
        x\theta_a(x)-\Lambda_\infty(\theta_a(x))
        =
        J_\infty(x)+O_K(a^{-2}).
\]
Replacing \(\theta_a(x)\) by \(\theta_\infty(x)\) in the \(a^{-1}\)-coefficient
changes the expression only by \(O_K(a^{-2})\), which gives the stated
large-\(a\) expansion.
\end{proof}

\subsection{Quadratic strong correction and variance asymptotics}

We finish by computing the quadratic term of the strong Szeg\H{o} correction
\(\mathcal C_a=\mathcal C_a^{\operatorname{WH}}+\Phi_a\).

\begin{lemma}
\label{lem:wh-quadratic-term}
For every \(a>1\), \(\mathcal{C}^{\operatorname{WH}}_a(0)=0\),
\(\left(\mathcal{C}^{\operatorname{WH}}_a\right)'(0)=0\), and
\begin{equation}
\label{eq:wh-correction-second-derivative}
\left(\mathcal{C}^{\operatorname{WH}}_a\right)''(0)
=
\frac{a-1}{2\pi^2(a+1)}\log a.
\end{equation}
Equivalently, as \(\theta\to0\) in every smaller strip \(S_{\theta_*}\),
\(0<\theta_*<\pi\),
\[
\mathcal{C}^{\operatorname{WH}}_a(\theta)
=
\frac{a-1}{4\pi^2(a+1)}(\log a)\theta^2
+
O(|\theta|^3).
\]
\end{lemma}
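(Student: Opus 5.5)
Since \(\mathcal C^{\operatorname{WH}}_a\) is holomorphic on \(S_\pi\) and agrees on each smaller strip \(S_{\theta_*}\) with \(\mathcal C^{\operatorname{WH}}_{a,\theta_*}(\theta)=B(\tau_\theta,\tau_\theta)\) (Proposition~\ref{prop:canonical-strong-szego}), I would compute the Taylor coefficients at \(0\) directly from this bilinear representation. Here \(B(f,g)=\int_0^\infty x f(x)g(-x)\,\mathrm dx\). The proof of Lemma~\ref{lem:log-symbol-sobolev} shows that \(\theta\mapsto\psi_\theta\) is holomorphic as a \(W^{1,2}(\mathbb R)\)-valued map. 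Composing with the inverse Fourier transform, \(\theta\mapsto\tau_\theta\) is holomorphic into \(L^2(\mathbb R,|x|\,\mathrm dx)\), and \(B\) is continuous there. Hence \(\theta\mapsto B(\tau_\theta,\tau_\theta)\) may be differentiated by the product rule.

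At \(\theta=0\) we have \(\sigma_0\equiv1\), so \(\psi_0=0\) and \(\tau_0=0\). This gives \(\mathcal C^{\operatorname{WH}}_a(0)=0\). The product rule then gives
\[
\bigl(\mathcal C^{\operatorname{WH}}_a\bigr)'(0)=B(\dot\tau_0,\tau_0)+B(\tau_0,\dot\tau_0)=0,
\qquad
\bigl(\mathcal C^{\operatorname{WH}}_a\bigr)''(0)=2B(\dot\tau_0,\dot\tau_0),
\]
because every term containing \(\tau_0\) vanishes. By the formula for \(\partial_\theta\psi_\theta\) in Lemma~\ref{lem:log-symbol-sobolev}, \(\partial_\theta\psi_\theta|_{\theta=0}=r_a\), with \(r_a\) extended by zero. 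Therefore \(\dot\tau_0=\check r_a\), and
\[
\bigl(\mathcal C^{\operatorname{WH}}_a\bigr)''(0)=2\int_0^\infty x\,\check r_a(x)\check r_a(-x)\,\mathrm dx.
\]

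The remaining step is explicit. Since \(r_a(\xi)=e^{-2\xi}-e^{-2a\xi}\) on \((0,\infty)\),
\[
\check r_a(x)=\frac{1}{2\pi}\left(\frac{1}{2-ix}-\frac{1}{2a-ix}\right)
=\frac{1}{2\pi}\,\frac{2(a-1)}{(2-ix)(2a-ix)},
\]
and \(\check r_a(-x)=\overline{\check r_a(x)}\) for real \(x\). So the integrand is
\[
\frac{x}{4\pi^2}\,\frac{4(a-1)^2}{(4+x^2)(4a^2+x^2)}.
\]
The substitution \(t=x^2\) and partial fractions give
\[
\int_0^\infty\frac{x\,\mathrm dx}{(4+x^2)(4a^2+x^2)}
=\frac12\int_0^\infty\frac{\mathrm dt}{(4+t)(4a^2+t)}
=\frac{\log a}{4(a^2-1)}.
\]
Hence \(\int_0^\infty x\,\check r_a(x)\check r_a(-x)\,\mathrm dx=\frac{(a-1)\log a}{4\pi^2(a+1)}\). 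Doubling this yields \eqref{eq:wh-correction-second-derivative}. The \(O(|\theta|^3)\) form then follows from Taylor's theorem for the holomorphic function \(\mathcal C^{\operatorname{WH}}_a\).

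The only point needing care is the justification of differentiating \(B(\tau_\theta,\tau_\theta)\) in \(\theta\) at \(0\), together with the identification \(\dot\tau_0=\check r_a\). Both follow from the \(W^{1,2}\)-valued holomorphy already established in Lemma~\ref{lem:log-symbol-sobolev} and the continuity of \(W^{1,2}(\mathbb R)\to L^2(\mathbb R,|x|\,\mathrm dx)\) under the inverse Fourier transform. Everything else is the elementary integral above.
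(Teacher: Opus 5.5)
Your proof is correct and follows essentially the same route as the paper. Both arguments use the holomorphy of \(\theta\mapsto\psi_\theta\) in \(W^{1,2}(\mathbb R)\), transferred to \(\tau_\theta\) in \(L^2(\mathbb R,|x|\,\mathrm dx)\), linearize at \(\theta=0\) with \(\dot\tau_0=\check r_a\) (the paper's \(\kappa_a\)), and finish with the same explicit partial-fraction integral. The only cosmetic difference is that you differentiate \(B(\tau_\theta,\tau_\theta)\) twice by the product rule, whereas the paper substitutes the expansion \(\tau_\theta=\theta\kappa_a+O(|\theta|^2)\) into the bilinear form.
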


\begin{proof}
Let \(r(\xi):=r_a(\xi)\mathbf 1_{(0,\infty)}(\xi)\). Since
\(\psi_\theta=\log(1+(e^\theta-1)r)\) is holomorphic as a
\(W^{1,2}(\mathbb R)\)-valued function, \(\psi_\theta=\theta r+O(|\theta|^2)\)
near \(0\). Define
\[
        \kappa_a(x)
        :=
        \frac{1}{2\pi}\int_0^\infty r_a(\xi)e^{ix\xi}\,\mathrm d\xi .
\]
If \(\tau_\theta\) is the inverse Fourier transform of \(\psi_\theta\), then
\(\tau_\theta=\theta\kappa_a+O(|\theta|^2)\) in
\(L^2(\mathbb R,|x|\,\mathrm dx)\). By continuity of the bilinear form
\((f,g)\mapsto\int_0^\infty x f(x)g(-x)\,\mathrm dx\),
\[
        \mathcal C_a^{\operatorname{WH}}(\theta)
        =
        \theta^2
        \int_0^\infty x\kappa_a(x)\kappa_a(-x)\,\mathrm dx
        +
        O(|\theta|^3).
\]

It remains to compute the integral. Since
\[
        \kappa_a(x)
        =
        \frac{1}{2\pi}
        \left(\frac{1}{2-ix}-\frac{1}{2a-ix}\right)
        =
        \frac{a-1}{\pi(2-ix)(2a-ix)},
\]
we have
\[
        \int_0^\infty x\kappa_a(x)\kappa_a(-x)\,\mathrm dx
        =
        \frac{(a-1)^2}{\pi^2}
        \int_0^\infty
        \frac{x\,\mathrm dx}{(x^2+4)(x^2+4a^2)}
        =
        \frac{a-1}{4\pi^2(a+1)}\log a.
\]
The zeroth and first derivatives vanish by the same expansion.
\end{proof}

\begin{lemma}
\label{lem:discrepancy-quadratic-term}
For every \(a>1\), \(\Phi_a(0)=0\), \(\Phi_a'(0)=0\), and
\begin{equation}
\label{eq:discrepancy-correction-second-derivative}
\Phi_a''(0)
=
\frac{1}{\pi^2}\log\frac{(a+1)^2}{4a}
-
\frac{a-1}{2\pi^2(a+1)}\log a.
\end{equation}
\end{lemma}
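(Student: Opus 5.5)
By Proposition~\ref{prop:discrepancy-limit}, \(\Phi_a(0)=0\), and the Taylor coefficients are given by \eqref{eq:discrepancy-taylor-coefficients}. For \(n=1\) this gives \(\Phi_a'(0)=\Gamma_a^{(1)}=0\) by Proposition~\ref{prop:trace-defect-limit}. For \(n=2\), using \(S(2,1)=S(2,2)=1\), it gives
\[
\Phi_a''(0)=\Gamma_a^{(1)}-\Gamma_a^{(2)}=-\Gamma_a^{(2)}.
\]
So everything reduces to computing
\[
\Gamma_a^{(2)}=-\int_{\mathbb R}\ell_2(u)D_2(u)\,\mathrm du .
\]
For \(m=2\) we have \(\ell_2(u)=|u|\), \(d_1=-u\) and \(d_2=u\). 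Hence
\[
G_{T,2}(u)=\frac1{\pi^2}\int_{[1,a]^2}\frac{\mathrm dy_1\,\mathrm dy_2}{\bigl((y_1+y_2)^2+u^2\bigr)^2},
\qquad
G_{W,2}(u)=\frac1{\pi^2}\left|\int_1^a\frac{\mathrm dz}{(2z+iu)^2}\right|^2 .
\]

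I will compute the two contributions \(\int_{\mathbb R}|u|G_{T,2}\) and \(\int_{\mathbb R}|u|G_{W,2}\) separately. They are each finite, because each \(G\) decays like \(u^{-4}\). For the \(T\) part, set \(s=y_1+y_2\) and apply Fubini. Then
\[
\int_{\mathbb R}\frac{|u|\,\mathrm du}{(s^2+u^2)^2}=\frac1{s^2},
\]
and \(\int_{[1,a]^2}(y_1+y_2)^{-2}\,\mathrm dy_1\,\mathrm dy_2=\log\frac{(a+1)^2}{4a}\). So this part contributes \(\pi^{-2}\log\frac{(a+1)^2}{4a}\). For the \(W\) part, \(\int_1^a(2z+iu)^{-2}\,\mathrm dz=\frac12\bigl(\frac1{2+iu}-\frac1{2a+iu}\bigr)\). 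This is, up to the constant \(\pi\) and a conjugation, the function \(\kappa_a\) of Lemma~\ref{lem:wh-quadratic-term}, since \(\kappa_a(x)=\frac{1}{2\pi}(\frac{1}{2-ix}-\frac{1}{2a-ix})\). Consequently \(G_{W,2}(u)=|\kappa_a(u)|^2\) and \(\int_{\mathbb R}|u|G_{W,2}=2\int_0^\infty x|\kappa_a(x)|^2\,\mathrm dx\). Lemma~\ref{lem:wh-quadratic-term} already evaluated the corresponding integral, giving \(2\cdot\frac{a-1}{4\pi^2(a+1)}\log a=\frac{a-1}{2\pi^2(a+1)}\log a\). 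Subtracting yields
\[
-\Gamma_a^{(2)}=\int|u|D_2=\frac1{\pi^2}\log\frac{(a+1)^2}{4a}-\frac{a-1}{2\pi^2(a+1)}\log a,
\]
which is \eqref{eq:discrepancy-correction-second-derivative}.

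The main obstacle is bookkeeping rather than analysis. I need to verify that the normalizations in the definition of \(G_{T,m}\) and \(G_{W,m}\) match the cyclic-trace expansion, meaning the factors \(\pi^{-m}\) and the \(y\)- and \(z\)-integrals coming from \(H_a\) and \(\sqrt{H_a(2\xi)H_a(2\eta)}\). I also need to check the sign in \(\Phi''(0)=-\Gamma^{(2)}\). As a consistency check I will use the variance. From \eqref{eq:stirling-logdet-derivative-formula}, \(\operatorname{Var}N_a(L)=\operatorname{tr}T-\operatorname{tr}T^2\). Its \(O(1)\) part should equal \(\mathcal C_a''(0)=(\mathcal C_a^{\mathrm{WH}})''(0)+\Phi_a''(0)=\pi^{-2}\log\frac{(a+1)^2}{4a}\), which agrees with the announced variance asymptotics. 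The cancellation of the \(\log a\) terms confirms the signs.
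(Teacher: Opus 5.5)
Your proposal is correct and follows essentially the same route as the paper. Both arguments use the Stirling-coefficient formula of Proposition~\ref{prop:discrepancy-limit} to reduce to \(\Phi_a''(0)=-\Gamma_a^{(2)}\), and then evaluate \(\int|u|G_{T,2}\) and \(\int|u|G_{W,2}\) explicitly, obtaining the same two constants. The only difference is presentational: the paper integrates over \((0,\infty)\) using evenness, while you integrate over \(\mathbb R\) and recognize \(G_{W,2}=|\kappa_a|^2\) from Lemma~\ref{lem:wh-quadratic-term}.
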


\begin{proof}
Proposition~\ref{prop:discrepancy-limit}, applied with \(n=1,2\), gives
\[
        \Phi_a'(0)=\Gamma_a^{(1)}=0,
        \qquad
        \Phi_a''(0)=\Gamma_a^{(1)}-\Gamma_a^{(2)}=-\Gamma_a^{(2)}.
\]
For \(m=2\), using the evenness of \(G_{T,2}\) and \(G_{W,2}\), 
\[
        \int_0^\infty uG_{T,2}(u)\,\mathrm du
        =
        \frac{1}{2\pi^2}\log\frac{(a+1)^2}{4a},
        \qquad
        \int_0^\infty uG_{W,2}(u)\,\mathrm du
        =
        \frac{a-1}{4\pi^2(a+1)}\log a.
\]
Hence
\[
        \Gamma_a^{(2)}
        =
        \frac{a-1}{2\pi^2(a+1)}\log a
        -
        \frac{1}{\pi^2}\log\frac{(a+1)^2}{4a},
\]
and the formula for \(\Phi_a''(0)\) follows.
\end{proof}

\begin{proposition}
\label{prop:correction-second-order}
For every \(a>1\), \(\mathcal C_a(0)=0\), \(\mathcal C_a'(0)=0\), and
\(
\mathcal C_a''(0)
=
\frac{1}{\pi^2}\log\frac{(a+1)^2}{4a}.
\)
Equivalently, as \(\theta\to0\) in every smaller strip \(S_{\theta_*}\),
\(0<\theta_*<\pi\),
\[
\mathcal C_a(\theta)
        =
        \frac{1}{2\pi^2}
        \log\frac{(a+1)^2}{4a}\,\theta^2
        +
        O(|\theta|^3).
\]
\end{proposition}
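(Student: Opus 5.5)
The plan is to add the two preceding lemmas, using the structural decomposition \(\mathcal C_a=\mathcal C_a^{\operatorname{WH}}+\Phi_a\) established in the proof of Theorem~\ref{thm:strong-szego-reference}. Both summands are holomorphic on \(S_\pi\), so derivatives at \(0\) add.

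First, Lemma~\ref{lem:wh-quadratic-term} gives \(\mathcal C_a^{\operatorname{WH}}(0)=(\mathcal C_a^{\operatorname{WH}})'(0)=0\) together with \eqref{eq:wh-correction-second-derivative}. Second, Lemma~\ref{lem:discrepancy-quadratic-term} gives \(\Phi_a(0)=\Phi_a'(0)=0\) together with \eqref{eq:discrepancy-correction-second-derivative}. Adding them, \(\mathcal C_a(0)=\mathcal C_a'(0)=0\), and the terms \(\pm\frac{a-1}{2\pi^2(a+1)}\log a\) cancel. This leaves
\[
\mathcal C_a''(0)=\frac{1}{\pi^2}\log\frac{(a+1)^2}{4a}.
\]

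For the expansion, I will use Taylor's theorem for the holomorphic function \(\mathcal C_a\) at \(0\), together with the vanishing of the zeroth and first derivatives. This gives \(\mathcal C_a(\theta)=\tfrac12\mathcal C_a''(0)\theta^2+O(|\theta|^3)\) on a small disk around \(0\), which is contained in every \(S_{\theta_*}\). The statement "as \(\theta\to0\) in \(S_{\theta_*}\)" is therefore just this local statement.

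There is no real obstacle, since all the analytic content is already in the two lemmas. The only point to check is that the decomposition and the normalizations \(\mathcal C_a^{\operatorname{WH}}(0)=\Phi_a(0)=0\) match those fixed in Proposition~\ref{prop:canonical-strong-szego} and Proposition~\ref{prop:discrepancy-limit}, and by the construction of the holomorphic branches vanishing at \(\theta=0\) they do.
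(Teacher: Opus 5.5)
Your proposal is correct and matches the paper's proof: you add the two lemmas through the decomposition \(\mathcal C_a=\mathcal C_a^{\operatorname{WH}}+\Phi_a\), the \(\frac{a-1}{2\pi^2(a+1)}\log a\) terms cancel, and Taylor's theorem for the holomorphic \(\mathcal C_a\) gives the quadratic expansion. Your check that the normalizations \(\mathcal C_a^{\operatorname{WH}}(0)=\Phi_a(0)=0\) are consistent is the only bookkeeping the paper also relies on.
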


\begin{proof}
By the decomposition
\(\mathcal C_a=\mathcal C_a^{\operatorname{WH}}+\Phi_a\),
Lemmas~\ref{lem:wh-quadratic-term} and~\ref{lem:discrepancy-quadratic-term}
give \(\mathcal C_a(0)=0\) and \(\mathcal C_a'(0)=0\). For the second
derivative,
\[
        \mathcal C_a''(0)
        =
        \left(\mathcal C_a^{\operatorname{WH}}\right)''(0)
        +
        \Phi_a''(0).
\]
Substituting \eqref{eq:wh-correction-second-derivative} and
\eqref{eq:discrepancy-correction-second-derivative}, the terms proportional to
\((a-1)(a+1)^{-1}\log a\) cancel. Since \(\mathcal C_a\) is holomorphic on
\(S_\pi\), Taylor's theorem gives the quadratic expansion.
\end{proof}

\begin{remark}
In Proposition~\ref{prop:correction-second-order}, the terms proportional to
\((a-1)(a+1)^{-1}\log a\) cancel between the Wiener--Hopf correction and the
discrepancy correction. Thus the quadratic coefficient of the full correction
reduces to the simpler shape-dependent quantity
\(\pi^{-2}\log((a+1)^2/(4a))\).
\end{remark}

\begin{corollary}
\label{cor:variance-asymptotics}
For every \(a>1\),
\[
        \operatorname{Var}(N_a(L))
        =
        L\frac{(a-1)(a+3)}{8\pi a(a+1)}
        +
        \frac{1}{\pi^2}\log\frac{(a+1)^2}{4a}
        +
        o(1).
\]
More generally, for \(I=[\alpha,\beta]\), \(0<\alpha<\beta\), and
\(a=\beta/\alpha\),
\[
        \operatorname{Var}(N_I(L))
        =
        L\alpha^{-1}
        \frac{(a-1)(a+3)}{8\pi a(a+1)}
        +
        \frac{1}{\pi^2}\log\frac{(a+1)^2}{4a}
        +
        o(1).
\]
\end{corollary}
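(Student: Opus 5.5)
The plan is to read off the variance from the second derivative at \(\theta=0\) of the strong Szeg\H{o} expansion in Theorem~\ref{thm:strong-szego-reference}. Since \(N_a(L)\) has exponential moments of all orders (it is a sum of independent Bernoulli variables, as noted in the proof of Lemma~\ref{lem:generating-functional}), the cumulant generating function \(\kappa_L(\theta):=\log\mathbb E e^{\theta N_a(L)}\) is holomorphic near \(0\). Moreover \(\kappa_L''(0)=\operatorname{Var}(N_a(L))\).

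Theorem~\ref{thm:strong-szego-reference} gives
\[
\kappa_L(\theta)-L\Lambda_a(\theta)-\mathcal C_a(\theta)\to0
\]
uniformly on a closed disk \(\overline{D(0,\varepsilon)}\subset S_\pi\), and all functions involved are holomorphic there. By Cauchy's integral formula for the second derivative on the circle \(|\theta|=\varepsilon\), uniform convergence of holomorphic functions implies convergence of their second derivatives at \(0\). Hence
\[
\operatorname{Var}(N_a(L))=L\Lambda_a''(0)+\mathcal C_a''(0)+o(1).
\]
Substituting \(\Lambda_a''(0)=\frac{(a-1)(a+3)}{8\pi a(a+1)}\) from Theorem~\ref{thm:szego-limit-reference} and \(\mathcal C_a''(0)=\pi^{-2}\log\frac{(a+1)^2}{4a}\) from Proposition~\ref{prop:correction-second-order} gives the first formula.

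For a general interval \(I=[\alpha,\beta]\), Lemma~\ref{lem:count-scaling} gives \(N_I(L)\stackrel{\mathrm d}=N_a(L/\alpha)\). Applying the first formula with \(L\) replaced by \(L/\alpha\) yields the second formula, since the \(O(1)\) term does not depend on \(\alpha\). Alternatively, one can repeat the Cauchy argument using Corollary~\ref{cor:general-interval-scaling}.

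The only point that needs care is justifying the passage from the expansion to its derivatives. This is handled entirely by holomorphy together with uniform convergence on a compact neighborhood of \(0\), which Theorem~\ref{thm:strong-szego-reference} provides. There is no substantive obstacle beyond this; the paper's remark about a ``direct second-order determinantal calculation'' is an independent cross-check that this argument does not need.
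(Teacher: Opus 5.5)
Your argument is correct, but it differs from the paper's proof. You differentiate the strong Szeg\H{o} expansion of Theorem~\ref{thm:strong-szego-reference} twice at $\theta=0$ and then import $\Lambda_a''(0)$ and $\mathcal C_a''(0)$. The differentiation is legitimate: Cauchy estimates on a small circle inside $S_\pi$ control the remainder, and the normalized holomorphic logarithm coincides with the real cumulant generating function near $0$. There is also no circularity, since Proposition~\ref{prop:correction-second-order} does not use the corollary.

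The paper instead proves the corollary by a direct second-order determinantal calculation. It writes
\[
\operatorname{Var}(N_a(L))=\int_{B_{a,L}}K_{\mathbb H}(z,z)\,\mathrm dA(z)-\int_{B_{a,L}}\int_{B_{a,L}}|K_{\mathbb H}(z,w)|^2\,\mathrm dA(z)\,\mathrm dA(w).
\]
Horizontal stationarity turns the double integral into $\int_{-L}^{L}(L-|u|)G_{T,2}(u)\,\mathrm du$. The explicit values of $\int_{\mathbb R}G_{T,2}$ and $\int_{\mathbb R}|u|G_{T,2}$ then give both the linear coefficient and the constant.

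That route is elementary and independent of Widom's theorem, the uniform trace-class discrepancy, and the normal-family argument. This independence is what lets it confirm the value of $\mathcal C_a''(0)$, including the cancellation of the $(a-1)(a+1)^{-1}\log a$ terms between $\mathcal C_a^{\operatorname{WH}}$ and $\Phi_a$.

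Your route is shorter once the machinery is in place. It also extends at once to all cumulants, $\kappa_n(N_a(L))=L\Lambda_a^{(n)}(0)+\mathcal C_a^{(n)}(0)+o(1)$ for every $n$. However, it saves no computation. The value of $\mathcal C_a''(0)$ you cite comes from Lemmas~\ref{lem:wh-quadratic-term} and~\ref{lem:discrepancy-quadratic-term}. These evaluate the same integral $\int_0^\infty uG_{T,2}(u)\,\mathrm du$ as the direct proof, plus Wiener--Hopf terms that then cancel. Unwound, your derivation is the direct one wrapped inside the full strong Szeg\H{o} theorem.
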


\begin{proof}
Let \(B_{a,L}=[0,L]\times[1,a]\), then
\[
        \operatorname{Var}(N_a(L))
        =
        \int_{B_{a,L}}K_{\mathbb H}(z,z)\,\mathrm dA(z)
        -
        \int_{B_{a,L}}\int_{B_{a,L}}
        |K_{\mathbb H}(z,w)|^2\,\mathrm dA(z)\mathrm dA(w).
\]
The first term is \(L\lambda_a=L(a-1)/(4\pi a)\). For the second term, changing variables
\(u=x-x'\) gives
\[
        \int_{B_{a,L}}\int_{B_{a,L}}
        |K_{\mathbb H}(z,w)|^2\,\mathrm dA(z)\mathrm dA(w)
        =
        \int_{-L}^L (L-|u|)G_{T,2}(u)\,\mathrm du,
\]
where
\[
        G_{T,2}(u)
        =
        \frac{1}{\pi^2}
        \int_{[1,a]^2}
        \frac{\mathrm dy_1\,\mathrm dy_2}
        {\bigl((y_1+y_2)^2+u^2\bigr)^2}.
\]
Since \(|u|G_{T,2}(u)\in L^1(\mathbb R)\), dominated convergence yields
\[
\begin{aligned}
\int_{-L}^L (L-|u|)G_{T,2}(u)\,\mathrm du
        &=
        L\int_{\mathbb R}G_{T,2}(u)\,\mathrm du
        -
        \int_{\mathbb R}|u|G_{T,2}(u)\,\mathrm du
        +
        o(1)\\
        &=L\frac{(a-1)^2}{8\pi a(a+1)}-\frac{1}{\pi^2}\log\frac{(a+1)^2}{4a}+o(1).
\end{aligned}
\]
Combining these identities with
\(\lambda_a=(a-1)/(4\pi a)\) gives the reference-interval formula.

For \(I=[\alpha,\beta]\), Lemma~\ref{lem:count-scaling} gives
\(N_I(L)\stackrel{\mathrm d}=N_a(L/\alpha)\), where \(a=\beta/\alpha\).
Substituting \(L/\alpha\) in the reference-interval asymptotic gives the
general formula.
\end{proof}

\end{document}